\theoremstyle{definition}
\newtheorem{theorem}{Theorem}[section]
\newtheorem{proposition}[theorem]{Proposition}
\theoremstyle{definition}
\theoremstyle{definition}
\newtheorem{definition}[theorem]{Definition}
\newtheorem{remark}[theorem]{Remark}
\newtheorem{example}[theorem]{Example}
\begin{document}
\baselineskip=17pt
\title[]{Fixed Point Sets of Involutions on The Product of Three Spheres }
\author[Dimpi and Hemant Kumar Singh]{ Dimpi and Hemant Kumar Singh}
\address{ Dimpi \newline 
	\indent Department of Mathematics\indent \newline\indent University of Delhi\newline\indent 
	Delhi -- 110007, India.}
\email{dimpipaul2@gmail.com}
\address{  Hemant Kumar Singh\newline\indent 
Department of Mathematics\newline\indent University of Delhi\newline\indent 
Delhi -- 110007, India.}
\email{hemantksingh@maths.du.ac.in}

\date{}
\thanks{}
\begin{abstract} 

\noindent Let $G=\mathbb{Z}_2$ act on a finitistic space $X$  having  mod $2$  cohomology of the product of three spheres $\mathbb{S}^n\times \mathbb{S}^m \times \mathbb{S}^l, 1 \leq n \leq m \leq l$. In this paper, we have determined the fixed point sets  of involutions on $X.$ This generalizes J. C. Su \cite{j1} results for involutions on the product of two sphere $\mathbb{S}^n \times \mathbb{S}^m, n\leq m.$
\end{abstract}
\thanks{ The first author of the paper is  supported by SRF of UGC, New Delhi, with  reference no.: 201610039267.}

\subjclass[2020]{Primary 57S17; Secondary 55M35}

\keywords{Finitistic space; Fixed point set; Totally nonhomologous to zero; Leray-Serre spectral sequence}

\maketitle
\section {Introduction}
\indent Let $(G,X)$ be a transformation group, where $G$ is compact Lie group  and $X$ is compact Hausdroff space,  with the fixed point set $F.$ The study of the cohomological structure of the fixed point set has been an interesting problem in transformation groups. J. C. Su \cite{j1} gave a conjecture which  states  that if $G=\mathbb{Z}_p,p$ a prime, act on a finitistic space $X$ which satisfies poincar\'{e} duality with respect to  \v{C}ech cohomology  then each component of the fixed point set also satisfies poincar\'{e}  duality. Bredon \cite{Bredon} proved Su's conjecture, in the case when $X$ is totally nonhomologous to zero in $X_G$ (Borel space).  In a particular  case, Puppe \cite{puppe} has proved the Bredon's  conjecture: if $H^*(X,\mathbb{Z}_p)$ generated by  $k$ elements as an algebra, then any component $F_0$ of $F,$ $H^*(F_0,\mathbb{Z}_p)$ generated by  at most $k$ elements. Puppe     proved  that  if $X$ is totally nonhomologous to zero in $X_G,$ then the number of generators of the cohomology ring of each component of the fixed point set with $\mathbb{Z}_p$-coefficient  is  at most the number of generators of $H^*(X,\mathbb{Z}_p).$

 For the fixed point sets of actions of a group $G$ on the spheres $\mathbb{S}^n$ and on the product of two spheres $ \mathbb{S}^n\times \mathbb{S}^m,$ a lot of work has been done in the literature. Here, we recall some of the  results.  The fixed point sets of $G=\mathbb{Z}_p,$ $p$ a prime, or $G=\mathbb{T}^k,k\geq 1$ ($\mathbb{T}^k$ denotes $k$-dimensional torus) on a finitistic space $X$ having mod $p$ or integral cohomology $n$-sphere are mod $p$ or integral  cohomology $r$-sphere, where $-1\leq r \leq n$ and $n-r$ is even for $G=\mathbb{Z}_p,p>2$ and $G=\mathbb{T}^k$ (see Chapter III, \cite{bredon}).  According to this well known result, the fixed point sets of transformation group $(G, X_1\times X_2)$ (with diagonal action), where $X_1$ and $X_2$ has mod $p$ or integral cohomology  of spheres, has the same mod $p$ or integral cohomology algebra of the product of two spheres. Swan \cite{swan} gave  sufficient conditions which insure that the fixed point set of $G=\mathbb{Z}_p, ~p$ a prime, actions on mod $p$ cohomology product of two even dimensional spheres have  mod $p$ cohomology  of product of two even dimensional spheres of lower dimensions. Su \cite{j1} determined the fixed point sets of $G=\mathbb{Z}_p$ actions on a space $X$ with mod $p$ cohomology algebra product of spheres $\mathbb{S}^m \times \mathbb{S}^n.$ Further, Chang et al. \cite{c} and David \cite{devid} discussed the fixed point sets of circle actions and  torus actions on the product of  even dimensional spheres and the product of odd dimensional spheres, respectively. Allday \cite{allday} discussed the fixed point sets of torus actions  on cohomology product of three odd dimensional spheres. In continuation, it seems to be an interesting problem to determine the fixed point sets of involutions  on a finitistic space $X$ having mod $2$ cohomology of the arbitrary product of three spheres $\mathbb{S}^n \times  \mathbb{S}^m \times \mathbb{S}^l,$ $1\leq n \leq m \leq l.$
 
In this paper, we have determined  the possibilities of the fixed point sets of involutions  on a finitistic space $X$ having mod $2$ cohomology of the arbitrary product of three spheres $\mathbb{S}^n \times  \mathbb{S}^m \times \mathbb{S}^l,$ $1\leq n \leq m \leq l.$

\section{Preliminaries}  
	\noindent We  recall  some known facts that will be used in this paper. A paracompact Hausdroff space is called finitistic if every open covering has a finite dimension refinement. For example: (1) Compact Hausdroff spaces are finitistic spaces. (2) $\prod_{n\geq 1}(\mathbb{S}^n \times \mathbb{R}^k) $ is noncompact finitistic space.  (3) Paracompact spaces with finite covering dimension are finitistic spaces.  Let $G$ be a finite cyclic group  acting on a finitistic space $X$ with the fixed point set $F.$ Note that $G$ act freely on contractible space $E_G$ (an infinite join of $G$ with itself) and  the orbit map $ E_G \rightarrow B_G$ is the universal G-bundle. The associated fibration $ X \stackrel{i} \hookrightarrow X_G \stackrel{\pi} \rightarrow B_G$ is called the Borel fibration, where $X_G = (X\times E_G)/G$ (Borel space) and  $B_G=E_G/G$ (classifying space). If $F$ is nonempty and  $x \in F,$ then $\eta_x : B_G \hookrightarrow X_G$ is cross section of $\pi,$ where $B_G \approx \{x\} \times_{G} E_G ,$ and  we get  $H^*(X_G)= ker~ \eta_x^* \oplus im~ \pi^*.$  The induced homomorphism $\eta_x^*$ depends on the component $F_0$ of  $F$ in which $x$ lies. If $\alpha \in H^n(X_G)$ such that $\alpha \in Ker~ \eta_x^*,$ then the image of $\alpha$ under the  restriction of $j: F_G \hookrightarrow X_G$ on $(F_0)_G$ does not  involve the term of $H^n(B_G)\otimes H^0(F_0).$ For details, we refer \cite{bredon,Bredon}.\\ A space $X$ is said to be  totally nonhomologous to zero (TNHZ)
in $X_G$  with respect to the ring $R$ if the inclusion map $i:X \hookrightarrow X_G$ induces
a surjection in the cohomology $i^*: H^*(X_ G ; R) \rightarrow H^*(X;R). $ So, if $X$ is TNHZ in $X_G,$ then for $a\in H^n(X)$ there exist $\alpha'\in H^n(X_G)$ such that $i^*(\alpha')=a.$ Define $\alpha=\alpha'-\pi^*\eta_x^*(\alpha').$ Then it is easy to see that $i^*(\alpha)=a$  and $\eta_x^*(\alpha)=0.$ We have used the following propositions to determine the fixed point sets of product three of spheres.

\begin{proposition}(\cite{bredon})
	Let $G=\mathbb{Z}_2$  act on  finitistic space $X$ and $\sum$ rk $H^i(X,\mathbb{Z}_2)< \infty,$ then the following statements are equivalent: \\
\indent	(a) $X$ is TNHZ in $X_G.$   \\
\indent	(b) $\sum_{i \geq 0}$ rk $H^i(F,\mathbb{Z}_2) =\sum_{i \geq 0}$ rk $H^i(X,\mathbb{Z}_2).$ \\ 
	\indent (c) $\pi_1(B_G)$ acts trivially on $H^*(X; \mathbb{Z}_2)$ and spectral sequence $E^{r,q}_2$ of fibration $X \hookrightarrow X_G \rightarrow B_G$ degenerates. 
\end{proposition}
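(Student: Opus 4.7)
The plan is to prove the three equivalences by showing the chain \mbox{(a)$\Leftrightarrow$(c)} via the Leray--Serre spectral sequence of the Borel fibration, and then \mbox{(a)$\Leftrightarrow$(b)} via Borel's localization theorem, rather than going around a full cycle. The rank--finiteness hypothesis $\sum \mathrm{rk}\, H^i(X;\mathbb{Z}_2)<\infty$ is what makes all three conditions compare reasonable finite numbers.

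First I would establish (a)$\Leftrightarrow$(c). Consider the $E_2$-page $E_2^{p,q}=H^p(B_G;\mathcal{H}^q(X;\mathbb{Z}_2))$, where $\mathcal{H}^q$ is the local coefficient system determined by the action of $\pi_1(B_G)$ on the fiber cohomology. The fiber inclusion $i:X\hookrightarrow X_G$ factors through the edge homomorphism $H^*(X_G;\mathbb{Z}_2)\twoheadrightarrow E_\infty^{0,*}\hookrightarrow E_2^{0,*}=H^*(X;\mathbb{Z}_2)^{\pi_1(B_G)}$. Surjectivity of $i^*$ (i.e.\ TNHZ) thus forces two things: every class in $H^*(X;\mathbb{Z}_2)$ is $\pi_1(B_G)$-invariant, which by the $\mathbb{Z}_2$-coefficient hypothesis makes the action trivial; and no differential can leave the $q=0$ row, i.e.\ $d_r\equiv 0$ on $E_r^{0,*}$ for all $r\geq 2$. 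Multiplicativity of the spectral sequence combined with the fact that $H^*(B_G;\mathbb{Z}_2)=\mathbb{Z}_2[t]$ is generated in the bottom edge then propagates this vanishing to the entire page, giving degeneration at $E_2$. Conversely, if (c) holds, $E_\infty^{0,*}=E_2^{0,*}=H^*(X;\mathbb{Z}_2)$, and the edge map provides the desired lift of every class, giving (a).

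Next I would show (a)$\Leftrightarrow$(b) using Borel's localization theorem. When (a) holds, the degeneration together with trivial monodromy yields a $H^*(B_G;\mathbb{Z}_2)$-module isomorphism $H^*(X_G;\mathbb{Z}_2)\cong H^*(B_G;\mathbb{Z}_2)\otimes H^*(X;\mathbb{Z}_2)$, so $H^*(X_G;\mathbb{Z}_2)$ is a free $\mathbb{Z}_2[t]$-module of rank $\sum\mathrm{rk}\,H^i(X;\mathbb{Z}_2)$. Inverting $t$ and applying Borel's localization theorem identifies this rank with the $\mathbb{Z}_2[t,t^{-1}]$-rank of $S^{-1}H^*(F_G;\mathbb{Z}_2)$, which, since $F_G\simeq F\times B_G$, equals $\sum\mathrm{rk}\,H^i(F;\mathbb{Z}_2)$. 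This gives (b). For the converse (b)$\Rightarrow$(a), I would invoke Borel's general inequality $\sum\mathrm{rk}\,H^i(F;\mathbb{Z}_2)\leq \sum\mathrm{rk}\,H^i(X;\mathbb{Z}_2)$, which is proved by tracking exactly how nontrivial differentials in the Serre spectral sequence, or any nontrivial $\pi_1(B_G)$-action, would strictly decrease the total rank surviving to the localization; equality therefore rules out both phenomena and forces TNHZ via the equivalence already shown with (c).

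The main obstacle is the converse direction (b)$\Rightarrow$(a): one must verify precisely that the only way the Borel inequality can be an equality is when every potential $d_r$ differential vanishes and the monodromy representation on $H^*(X;\mathbb{Z}_2)$ is trivial. This amounts to a careful accounting of how each nonzero differential $d_r$ contributes two classes (source and target) that are absent from $E_\infty$, and how any nontrivial $\mathbb{Z}_2$-monodromy already cuts the $E_2^{0,*}$ rank before differentials even begin. Once this bookkeeping is done, the remaining implications are formal consequences of the spectral sequence machinery and the Leray--Hirsch principle.
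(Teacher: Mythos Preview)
The paper does not give its own proof of this proposition: it is quoted verbatim from Bredon's book as a preliminary result, with no argument supplied. So there is nothing in the paper to compare your proposal against.

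That said, your sketch is essentially the standard proof one finds in Bredon (Chapter III and VII). The implication (a)$\Leftrightarrow$(c) is handled correctly: surjectivity of the edge homomorphism forces trivial monodromy and kills the differentials on the $p=0$ column, and $H^*(B_G)$-linearity of the differentials (since $t\in E_2^{1,0}$ is automatically a permanent cocycle) propagates this to the whole page. For (a)$\Rightarrow$(b) your use of the Leray--Hirsch freeness plus localization is the right mechanism. The direction (b)$\Rightarrow$(a) you correctly flag as the delicate one; the honest bookkeeping argument you describe---that any nontrivial differential or nontrivial monodromy strictly lowers the eventual localized rank---is exactly how Bredon's proof of the Floyd--Borel inequality is structured, so you have identified the right ingredient even if the details would need to be filled in carefully (in particular, one must track that a nonzero $d_r$ on a finitistic space kills a \emph{pair} of $\mathbb{Z}_2[t]$-towers simultaneously, and that over $\mathbb{Z}_2$ a nontrivial involution on $H^*(X)$ already drops $\dim E_2^{0,*}$ below $\dim H^*(X)$).
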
 
\begin{proposition}\label{thm 2}(\cite{bredon})
	 Let $G = \mathbb{Z}_2$  act on the finitistic
	space $X.$ Suppose that $A$ be a closed and invariant subset of $X,$ and  $H^i ( X ,A; \mathbb{Z}_2)
	= 0$ for $i > n$. Then
	$j ^* : H^ k ( X_G,A_G ; \mathbb{Z}_2) \rightarrow H^k(F_G, A_G\cap F_G; \mathbb{Z}_2)$
	is an isomorphism for $k > n$. If $(X,A) $ is TNHZ
	 in $(X_ G,A_G) $, then $j^*$ is a monomorphism for all $k.$
\end{proposition}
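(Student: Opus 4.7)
\medskip

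\noindent\textbf{Proof proposal for Proposition~\ref{thm 2}.} The plan is to compare the Leray--Serre spectral sequences of the two Borel fibrations of pairs and exploit both the vanishing hypothesis on $H^i(X,A;\mathbb{Z}_2)$ and the triviality of the $G$-action on $F$. Concretely, I would consider the two Borel fibrations
\[
(X,A)\hookrightarrow (X_G,A_G)\xrightarrow{\pi}B_G,\qquad (F,A\cap F)\hookrightarrow (F_G,A_G\cap F_G)\xrightarrow{\pi}B_G,
\]
together with the fibrewise inclusion $j$ over $B_G$. This produces a morphism of spectral sequences
\[
E_2^{p,q}(X,A)=H^p(B_G;\mathcal{H}^q(X,A;\mathbb{Z}_2))\ \Longrightarrow\ H^{p+q}(X_G,A_G;\mathbb{Z}_2)
\]
and the analogous sequence for $(F,A\cap F)$. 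The hypothesis $H^i(X,A;\mathbb{Z}_2)=0$ for $i>n$ immediately gives $E_2^{p,q}(X,A)=0$ for $q>n$, while on the fixed--set side the fibration is trivial (as $G$ acts trivially on $F$), so that sequence collapses at $E_2$ with
\[
H^*(F_G,A_G\cap F_G;\mathbb{Z}_2)\cong H^*(B_G;\mathbb{Z}_2)\otimes H^*(F,A\cap F;\mathbb{Z}_2).
\]

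\smallskip

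To obtain the isomorphism for $k>n$, I would pass to the pair $(X_G,F_G\cup A_G)$ via the long exact sequence of the triple and use excision $H^k(F_G\cup A_G,A_G)\cong H^k(F_G,A_G\cap F_G)$. It then suffices to show $H^k(X_G,F_G\cup A_G;\mathbb{Z}_2)=0$ for $k>n$. The key is that $G$ acts \emph{freely} on the open complement $X\setminus F$, so the restriction of the Borel construction over this complement is homotopy equivalent to the ordinary orbit space $(X\setminus F)/G$. Applying the Leray--Serre spectral sequence of the fibration $(X,F\cup A)\hookrightarrow (X_G,F_G\cup A_G)\to B_G$, whose fibre cohomology vanishes in degrees $>n$ (derived from $H^i(X,A;\mathbb{Z}_2)=0$ for $i>n$ together with the free $G$-action on $X\setminus F$, via Proposition~2.1 applied to free quotients), yields the required vanishing in total degree $>n$.

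\smallskip

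For the second assertion, I would invoke the TNHZ hypothesis: by Proposition~2.1 the spectral sequence for $(X,A)\hookrightarrow (X_G,A_G)\to B_G$ degenerates and the local coefficient system is trivial, so $H^*(X_G,A_G;\mathbb{Z}_2)$ is a free module over $H^*(B_G;\mathbb{Z}_2)=\mathbb{Z}_2[t]$. Combining the first part with Borel's localization theorem (which states that $j^*$ becomes an isomorphism after inverting $t$), the kernel of $j^*$ is $t$-torsion; but a free $\mathbb{Z}_2[t]$-module has no $t$-torsion, so $j^*$ is injective in every degree.

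\smallskip

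The main obstacle I expect is the vanishing step $H^k(X_G,F_G\cup A_G;\mathbb{Z}_2)=0$ for $k>n$. The naive hope that the free orbit space $(X\setminus F)/G$ inherits cohomological dimension $\leq n$ from the fibre is not automatic; one has to carefully combine the fibre-vanishing in the Leray--Serre sequence of the \emph{triple} with the freeness of the action to kill the terms with $q>n$ after passing to the Borel construction. Once this vanishing is in hand, both conclusions follow cleanly from the exact sequences and the localization/collapse argument sketched above.
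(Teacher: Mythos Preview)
The paper does not give its own proof of this proposition: it is quoted verbatim from Bredon's book \cite{bredon} as a preliminary result, with no argument supplied. So there is nothing in the paper to compare against; your outline is essentially a reconstruction of Bredon's original proof (Chapter~VII of \cite{bredon}).

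That said, a brief comment on your sketch. Your overall architecture---reduce to the vanishing of $H^k(X_G,F_G\cup A_G;\mathbb{Z}_2)$ for $k>n$ via the long exact sequence of the triple and excision, then handle the TNHZ case by combining freeness of $H^*(X_G,A_G)$ over $\mathbb{Z}_2[t]$ with the localization theorem---is exactly the standard one, and the second half (injectivity) is clean and correct as you wrote it. The point you flag as the obstacle is genuinely the crux, but your proposed resolution is not quite right: arguing via the Leray--Serre spectral sequence of $(X,F\cup A)\hookrightarrow(X_G,F_G\cup A_G)\to B_G$ does not help, because even if the fibre cohomology vanished above degree $n$, the columns $E_2^{p,q}$ with $q\le n$ and $p$ large would still contribute in total degree $>n$. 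The actual mechanism in Bredon is different: one excises to $((X\setminus F)_G,(A\setminus(A\cap F))_G)$, uses that the free $G$-action makes this pair weakly equivalent to the honest orbit pair $((X\setminus F)/G,(A\setminus(A\cap F))/G)$, and then invokes the lemma that for a \emph{free} $\mathbb{Z}_2$-action on a finitistic pair with $H^i=0$ for $i>n$, the orbit pair also has $H^i=0$ for $i>n$ (proved by a transfer or Smith-sequence argument, not by Proposition~2.1). Once you replace your spectral-sequence step with this orbit-space lemma, the proof goes through as you outlined.
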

	\begin{proposition}(\cite{Bredon1})\label{2.2}
	Let $X$ be TNHZ in $X_G$ and $\{\gamma_j\}$ be a set of homogeneous elements in $H^*(X_G;\mathbb{Z}_2)$ such that $\{i^*(\gamma_j)\}$ forms $\mathbb{Z}_2$-basis of $H^*(X; \mathbb{Z}_2).$ Then, $H^*(X_G;\mathbb{Z}_2)$ is the free $H^*(B_G)$-module generated by $\{\gamma_j\}.$
\end{proposition}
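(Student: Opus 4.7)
The plan is to apply a Leray--Hirsch type argument to the Borel fibration $X \stackrel{i}{\hookrightarrow} X_G \stackrel{\pi}{\to} B_G$. The TNHZ hypothesis, combined with Proposition~2.1, provides the two key inputs: $\pi_1(B_G)$ acts trivially on $H^*(X;\mathbb{Z}_2)$, so the $E_2$-page of the Leray--Serre spectral sequence simplifies to $E_2^{p,q} = H^p(B_G;\mathbb{Z}_2) \otimes H^q(X;\mathbb{Z}_2)$; and the spectral sequence degenerates, so $E_\infty = E_2$ as a bigraded $\mathbb{Z}_2$-vector space.

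Next I would define the $H^*(B_G;\mathbb{Z}_2)$-module homomorphism
$$\Phi : H^*(B_G;\mathbb{Z}_2) \otimes_{\mathbb{Z}_2} H^*(X;\mathbb{Z}_2) \longrightarrow H^*(X_G;\mathbb{Z}_2), \qquad b \otimes i^*(\gamma_j) \longmapsto \pi^*(b)\cdot \gamma_j.$$
This is well defined because $\{i^*(\gamma_j)\}$ is a $\mathbb{Z}_2$-basis of $H^*(X;\mathbb{Z}_2)$, and the target of $\Phi$ is automatically an $H^*(B_G;\mathbb{Z}_2)$-module via $\pi^*$. The whole proposition then reduces to showing $\Phi$ is an isomorphism.

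For this I would use the decreasing filtration $\{F^pH^*(X_G;\mathbb{Z}_2)\}$ associated to the Borel fibration, in which $\pi^*(H^p(B_G;\mathbb{Z}_2)) \subseteq F^p$. If $\gamma_j$ has fiber degree $q$, then $\gamma_j \in F^0 H^q(X_G;\mathbb{Z}_2)$ and its class in $E_\infty^{0,q} = H^0(B_G)\otimes H^q(X)$ is $1\otimes i^*(\gamma_j)$. Multiplicativity of the filtration therefore forces $\pi^*(b)\cdot \gamma_j \in F^p H^{p+q}(X_G;\mathbb{Z}_2)$ with associated-graded image $b \otimes i^*(\gamma_j) \in E_\infty^{p,q}$. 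Thus $\Phi$ is filtration-preserving and the induced map on associated gradeds is precisely the identification $H^*(B_G)\otimes H^*(X) \cong E_\infty$. A standard five-lemma / induction-on-filtration argument (in each total degree the filtration has only finitely many nontrivial steps once we work degree by degree) then upgrades this to $\Phi$ itself being an isomorphism, which is exactly the assertion that $\{\gamma_j\}$ is a free $H^*(B_G;\mathbb{Z}_2)$-module basis of $H^*(X_G;\mathbb{Z}_2)$.

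The main obstacle is bookkeeping with the filtration: one must verify that $\pi^*(b)\cdot \gamma_j$ lies in exactly the correct filtration step and that its associated-graded image is the tensor $b \otimes i^*(\gamma_j)$, not some nonzero correction term from higher filtration. This is handled by the multiplicativity of the spectral sequence and the fact that $\pi^*(b)$ comes from the base, so it sits strictly in $F^p/F^{p+1}$ with the obvious representative. Once this compatibility is pinned down, the isomorphism follows formally from $E_\infty = H^*(B_G)\otimes H^*(X)$.
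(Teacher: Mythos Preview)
The paper does not supply its own proof of Proposition~2.2; it is quoted from Bredon~\cite{Bredon1} as a background result, so there is no in-paper argument to compare against. Your Leray--Hirsch argument is the standard one and is correct: Proposition~2.1 gives trivial $\pi_1(B_G)$-action and degeneration at $E_2$, the map $\Phi$ is well defined and filtration-preserving, and since it induces the identity on the associated graded $E_\infty^{*,*}\cong H^*(B_G)\otimes H^*(X)$, the usual finite-filtration induction in each total degree shows $\Phi$ is an isomorphism.
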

\begin{proposition}(\cite{bredon})\label{prop1}
	For a finitistic $G$-space $X,$ where $G$ is cyclic group of prime order $p,$  we have   $ \sum_{i \geq j}$ rk $H^{i}(F;\mathbb{Z}_2) \leq \sum_{i \geq j} $ rk $H^{i}(X;\mathbb{Z}_2),$ for each $j.$
\end{proposition}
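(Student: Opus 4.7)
My plan is to deduce Proposition~\ref{prop1} from the Borel spectral sequence of the fibration $X\hookrightarrow X_G\to B_G$ combined with Proposition~\ref{thm 2}. Write $\beta_q = \mathrm{rk}\,H^q(X;\Z_2)$, $\gamma_q=\mathrm{rk}\,H^q(F;\Z_2)$, and let $n_0 = \max\{i:\beta_i \neq 0\}$. Taking $A = \emptyset$ in Proposition~\ref{thm 2} yields
\[ j^*: H^n(X_G;\Z_2)\;\xrightarrow{\;\cong\;}\;H^n(F_G;\Z_2) \]
for every $n>n_0$. I would fix one such $n$, also chosen larger than the top nonzero degree of $H^*(F;\Z_2)$, and then obtain the inequality for every $j\geq 0$ by comparing the two Serre filtrations in this single total degree as $p$ varies and $j=n-p+1$ sweeps through the nonnegative integers.

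The shape of each filtration is easy to identify. Since $G$ acts trivially on $F$, the Borel spectral sequence of $F\hookrightarrow F_G\to B_G$ collapses at $E_2$, and using $\dim H^p(B_{\Z_2};\Z_2)=1$ for all $p\geq 0$ one finds
\[ \dim H^n(F_G)/F^p H^n(F_G) \;=\; \sum_{q\geq n-p+1}\gamma_q. \]
For the fibration over $X$ only the inequality $\dim E_\infty^{p',q}\leq \dim E_2^{p',q}=\beta_q$ is available, which gives
\[ \dim H^n(X_G)/F^p H^n(X_G) \;=\; \sum_{p'<p}\dim E_\infty^{p',\,n-p'} \;\leq\; \sum_{q\geq n-p+1}\beta_q. \]

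The crucial step is to observe that $j^*$ is induced by a morphism of Borel fibrations and therefore preserves the Serre filtration. Because it is a total-degree isomorphism, its restriction $F^p H^n(X_G)\hookrightarrow F^p H^n(F_G)$ is injective, so on quotients
\[ \dim H^n(X_G)/F^p H^n(X_G) \;\geq\; \dim H^n(F_G)/F^p H^n(F_G) \;=\; \sum_{q\geq n-p+1}\gamma_q. \]
Chaining with the previous bound and setting $j=n-p+1$ gives $\sum_{q\geq j}\gamma_q\leq\sum_{q\geq j}\beta_q$, which is the claim.

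The main subtlety I expect is not the dimension count itself but the bookkeeping between the two filtrations and the directions of the inequalities they produce: one must use ``$E_\infty\subseteq E_2$'' to bound $\dim H^n(X_G)/F^p H^n(X_G)$ from above, while the degeneration over $F$ computes $\dim H^n(F_G)/F^p H^n(F_G)$ exactly, and then the filtration-preserving isomorphism $j^*$ must be parlayed into a comparison of quotients in the correct direction. Once this framework is in place, the argument is essentially linear algebra.
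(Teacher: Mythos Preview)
The paper does not supply a proof of Proposition~\ref{prop1}; it is simply quoted from Bredon's book \cite{bredon} (this is essentially Theorem~III.7.10 there), so there is no in-paper argument to compare against. Your outline is the standard spectral-sequence proof and is correct.

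Two small points worth tightening. First, you write $\dim E_2^{p',q}=\beta_q$, but the proposition does not assume that $\pi_1(B_G)$ acts trivially on $H^*(X)$; with possibly twisted coefficients one only has $\dim E_2^{p',q}=\dim H^{p'}(B_G;\mathcal{H}^q(X))\leq \beta_q$. This is harmless, since only the inequality is used in your chain. Second, choosing $n$ larger than the top nonzero degree of $H^*(F)$ presupposes that such a degree exists. This is not circular: once you know $j^*$ is an isomorphism for all $n>n_0$, the bound $\dim H^n(X_G)\leq\sum_{q\leq n_0}\dim E_2^{n-q,q}\leq\sum_q\beta_q$ forces $\dim H^n(F_G)=\sum_{q\leq n}\gamma_q$ to be bounded as $n\to\infty$, hence $\gamma_q=0$ for $q\gg 0$. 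With these two remarks your argument goes through as written.
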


\begin{definition}{\cite{bredon}}
A space $X$ is said to be poincar\'{e} duality space over a field $k$ of formal dimension $r$ if the following conditions are satisfied:\\
\indent(a) $H^*(X;k)$ is finitely generated.\\ \indent(b) $H^i(X;k)=0$ for $i>r$ and $H^r(X;k)\approx k.$\\ \indent(c) An element $u\in H^i(X;k)$ is nonzero if and only if there exists an element $u'\in  H^{r-i}(X;k)$ called poincar\'{e} dual of $u,$ such that the cup product $ 0 \neq uu'\in H^r(X;k).$	
\end{definition}
Recall that the poincar\'{e} duals  of basis elements are unique \cite{char}.
\begin{proposition} \label{thm 1}(\cite{bredon})
	Let $X$ be a finitistic poincar\'{e} duality space over $\mathbb{Z}_2$ of formal dimension $n$, where $p$ is prime. Let $G=\mathbb{Z}_2$ act on $X,$ where $X$ is TNHZ in $X_G.$ Then, for each component $F_0$ of $F$ is a poincar\'{e} duality space of formal dimension $r \leq n.$  If $r=n,$ then $F=F_0$ is connected  and  the restriction $H^*(X;\mathbb{Z}_2)\rightarrow H^*(F;\mathbb{Z}_2)$ are isomorphisms.
\end{proposition}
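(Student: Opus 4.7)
The approach is to exploit the Borel fibration $X\hookrightarrow X_G\to B_G$ together with the TNHZ hypothesis, viewing everything as a module over $H^*(B_G;\Z_2)=\Z_2[t]$. By Proposition \ref{2.2}, $H^*(X_G;\Z_2)$ is free over $\Z_2[t]$ on lifts $\{\gamma_j\}$ of a $\Z_2$-basis of $H^*(X;\Z_2)$; in particular I fix a lift $\Omega\in H^n(X_G;\Z_2)$ of the fundamental class $\omega\in H^n(X;\Z_2)$. Since $G$ acts trivially on $F$, the fibration $F_G\to B_G$ is trivial, so $H^*(F_G;\Z_2)=H^*(F;\Z_2)\otimes\Z_2[t]$. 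Proposition \ref{thm 2} (with $A=\emptyset$) makes $j^*\colon H^k(X_G)\to H^k(F_G)$ an isomorphism for all $k>n$, and Proposition \ref{prop1} gives $\sum_i\mathrm{rk}\,H^i(F;\Z_2)<\infty$, so $F$ has finitely many components; the bound $r_0:=\max\{i:H^i(F_0;\Z_2)\neq 0\}\le n$ for each component $F_0$ is then immediate from Proposition \ref{prop1} applied with $j=n+1$.

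The heart of the argument is to produce, for each $0\neq a\in H^i(F_0;\Z_2)$ with $i\le r_0$, a dual class $a'\in H^{r_0-i}(F_0;\Z_2)$ with $a\cdot a'\neq 0$ in $H^{r_0}(F_0;\Z_2)$. Pick a nonzero $u\in H^{r_0}(F_0;\Z_2)$; for $N$ large, $a\cdot t^N\in H^{i+N}((F_0)_G)$ sits in the range $i+N>n$ where Proposition \ref{thm 2} gives a unique lift $\widetilde a\in H^{i+N}(X_G)$. Expand $\widetilde a=\sum_j\gamma_j\,p_j(t)$ in the free basis and use Poincar\'e duality on $X$ via $\Omega$ to pair $i^*\widetilde a$ with a dual in $H^{n-i}(X;\Z_2)$; lift that dual back to $H^{n-i}(X_G)$, restrict it to $(F_0)_G$, and divide by $t^N$ (well-defined because of $t$-regularity of the free module and the high-degree isomorphism of Proposition \ref{thm 2}). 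The output is the desired $a'$. The main obstacle is the bookkeeping needed to isolate the $F_0$-contribution from those of other components, which is handled by composing with the cross-section $\eta_x^*$ for $x\in F_0$, tracking the decomposition $H^*(X_G)=\ker\eta_x^*\oplus\mathrm{im}\,\pi^*$, and restricting to each $(F_0)_G$ summand independently in the product decomposition of $H^*(F_G;\Z_2)$.

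For the rigidity claim when $r_0=n$, the construction above shows the restriction $H^n(X;\Z_2)\to H^n(F_0;\Z_2)$ sends $\omega$ to the fundamental class of $F_0$, hence is nonzero. Applying Proposition \ref{prop1} with $j=n$ together with the total-rank equality from Proposition 2.1(b) then forces every other component of $F$ to contribute zero to $\sum_i\mathrm{rk}\,H^i(F;\Z_2)$, so $F=F_0$ is connected. The same rank identity applied degree by degree (using Proposition \ref{prop1} with arbitrary $j$ as a monotonicity tool) upgrades the restriction $H^*(X;\Z_2)\to H^*(F;\Z_2)$ to a rank-preserving graded surjection and hence an isomorphism; the ring structure descends through the free-module presentation of Proposition \ref{2.2}.
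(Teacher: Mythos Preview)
The paper does not give a proof of this proposition; it is quoted from Bredon's book as a preliminary, so there is no paper proof to compare against. I evaluate your sketch on its own.

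Your architecture is correct and is essentially Bredon's: exploit the free $\Z_2[t]$-module structure on $H^*(X_G)$ (Proposition~\ref{2.2}), the high-degree isomorphism $j^*$ (Proposition~\ref{thm 2}), and transport duality through $j^*$ to $H^*((F_0)_G)$. But the central step as you wrote it does not work. You take $\widetilde a\in H^{i+N}(X_G)$ with $N$ large and then ``pair $i^*\widetilde a$ with a dual in $H^{n-i}(X;\Z_2)$''; since $i+N>n$ we have $H^{i+N}(X)=0$, so $i^*\widetilde a=0$ and the pairing is vacuous. The subsequent instruction to ``divide by $t^N$'' is also ill-posed: the class you restrict to $(F_0)_G$ lies in total degree $n-i$, where no factor $t^N$ was ever introduced. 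What is actually needed is to observe that the cup-and-project pairing $H^*(X_G)\otimes_{\Z_2[t]}H^*(X_G)\to\Z_2[t]$ (projection onto the $\Omega$-coefficient in the free basis) is a perfect $\Z_2[t]$-bilinear form, push it through $j^*$ after inverting $t$ to obtain a perfect pairing on $H^*(F_0)[t,t^{-1}]$, and then argue that the restriction $j^*(\Omega)|_{(F_0)_G}=\sum_k\nu_k\, t^{n-k}$ has $\nu_{r_0}\neq 0$, where $r_0$ is the top nonvanishing degree of $H^*(F_0)$; this $\nu_{r_0}$ is then the fundamental class of $F_0$. That last fact is the crux of the whole proof, and your sketch neither states it nor proves it, nor explains why the $a'$ you produce should satisfy $a\cdot a'\neq 0$.

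Your $r_0=n$ argument inherits this gap (the claim that $\omega|_{F_0}\neq 0$ is precisely $\nu_n\neq 0$), and contains a second one: ``Proposition~\ref{prop1} with $j=n$ forces every other component to contribute zero'' is not what that inequality says. It only yields $\sum_{i\ge n}\mathrm{rk}\,H^i(F)\le 1$, which still permits further components of formal dimension below $n$. Connectedness follows instead from the observation that, once $\omega|_{F_0}\neq 0$ is established, Poincar\'e duality on $X$ makes the restriction $H^*(X)\to H^*(F_0)$ injective; combined with the total-rank equality of Proposition~2.1(b) this leaves no room for any other component.
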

\begin{proposition}\cite{puppe}\label{puppe}
	Let $G=\mathbb{Z}_2$ act on a finitistic space $X,$ where $X$ is TNHZ in $X_G.$ If $H^*(X ;\mathbb{Z}_2)$ is generated by $k$ elements as an algebra, then each component of the fixed point set is generated by at most $k$ elements.
\end{proposition}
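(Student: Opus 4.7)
The plan is to lift a chosen set of $k$ algebra generators of $H^*(X;\mathbb{Z}_2)$ to the Borel space $X_G$, argue that together with $t\in H^1(B_G;\mathbb{Z}_2)$ they generate $H^*(X_G;\mathbb{Z}_2)$ as a $\mathbb{Z}_2$-algebra, transport this generating set to the fixed-point locus via $j^*$, and then descend to $H^*(F_0;\mathbb{Z}_2)$ via the specialisation $t\mapsto 1$. Since $X$ is TNHZ in $X_G$, the inclusion $i^*$ is surjective, so a choice of algebra generators $a_1,\dots,a_k$ of $H^*(X;\mathbb{Z}_2)$ can be lifted to elements $\alpha_1,\dots,\alpha_k\in H^*(X_G;\mathbb{Z}_2)$ with $i^*(\alpha_j)=a_j$. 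By Proposition~\ref{2.2}, $H^*(X_G;\mathbb{Z}_2)$ is a free $\mathbb{Z}_2[t]$-module on lifts of any $\mathbb{Z}_2$-basis of $H^*(X;\mathbb{Z}_2)$, so the quotient $H^*(X_G;\mathbb{Z}_2)/(t)$ is naturally identified with $H^*(X;\mathbb{Z}_2)$. A straightforward downward induction on the $t$-adic filtration then shows that $\{\alpha_1,\dots,\alpha_k,t\}$ generates $H^*(X_G;\mathbb{Z}_2)$ as a $\mathbb{Z}_2$-algebra.

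To pass to $F$, I would invoke Proposition~\ref{thm 2} with $A=\varnothing$: since $H^i(X;\mathbb{Z}_2)=0$ for $i$ exceeding the top degree $n$ of $H^*(X;\mathbb{Z}_2)$, the restriction $j^*:H^k(X_G;\mathbb{Z}_2)\to H^k(F_G;\mathbb{Z}_2)$ is an isomorphism for every $k>n$. Since $G$ acts trivially on $F$, we have $H^*(F_G;\mathbb{Z}_2)\cong H^*(F;\mathbb{Z}_2)\otimes\mathbb{Z}_2[t]$, and the previous isomorphism in large degrees upgrades to an isomorphism of $\mathbb{Z}_2[t,t^{-1}]$-algebras after inverting $t$. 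Projecting onto a component $F_0$ and setting $\beta_j:=j^*(\alpha_j)|_{(F_0)_G}\in H^*(F_0;\mathbb{Z}_2)[t]$, the first step transports verbatim to the localised ring: $\{\beta_1,\dots,\beta_k,t,t^{-1}\}$ generates $H^*(F_0;\mathbb{Z}_2)[t,t^{-1}]$ as a $\mathbb{Z}_2$-algebra.

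The final step is to descend from the localised ring to $H^*(F_0;\mathbb{Z}_2)$. The specialisation $\phi:H^*(F_0;\mathbb{Z}_2)[t,t^{-1}]\to H^*(F_0;\mathbb{Z}_2)$ defined by $t\mapsto 1$ is a surjective $\mathbb{Z}_2$-algebra homomorphism, and applying it to the generating set above yields $\{\phi(\beta_1),\dots,\phi(\beta_k)\}\subset H^*(F_0;\mathbb{Z}_2)$; these at most $k$ elements therefore generate $H^*(F_0;\mathbb{Z}_2)$ as a $\mathbb{Z}_2$-algebra, which is precisely the claim. The main obstacle I foresee is verifying that the specialisation $t\mapsto 1$ truly delivers a generating set: writing $\beta_j=\sum_l b_{jl}t^l$ with $b_{jl}\in H^*(F_0;\mathbb{Z}_2)$, the image $\phi(\beta_j)=\sum_l b_{jl}$ mixes cohomological degrees, and some care---relying on the finite-dimensionality of $H^*(F_0;\mathbb{Z}_2)$---is needed to confirm that polynomial expressions in these non-homogeneous elements exhaust every homogeneous class, exactly as in Puppe's original argument \cite{puppe}.
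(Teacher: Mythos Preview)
The paper does not provide a proof of this proposition; it is quoted from Puppe's paper \cite{puppe} as a background result and is used throughout Section~3 without argument. So there is nothing in the paper to compare your proposal against.

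As for your proposal itself, the outline is correct and in fact reproduces Puppe's original localisation argument. Lifting the $k$ algebra generators via $i^*$, showing $\{\alpha_1,\dots,\alpha_k,t\}$ generate $H^*(X_G;\mathbb{Z}_2)$ by a filtration argument (Proposition~\ref{2.2}), invoking Proposition~\ref{thm 2} to obtain the isomorphism $H^*(X_G)[t^{-1}]\cong H^*(F)[t,t^{-1}]$, and then specialising $t\mapsto 1$ is exactly the standard route. Your worry about the last step is not a real obstruction: $\phi$ is a surjective ring homomorphism, so it sends any generating set to a generating set, and the statement does not demand homogeneous generators. If one insists on homogeneous generators, note that for a connected graded $\mathbb{Z}_2$-algebra $A$ the minimal number of algebra generators equals $\dim_{\mathbb{Z}_2} A^+/(A^+)^2$, which depends only on $A$ and is therefore bounded by the size of any generating set, homogeneous or not.
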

\begin{proposition}\label{nontrivial}(\cite{bredon})
	Let $G=\mathbb{Z}_2$ act on a finitistic space $X$ and  $\pi_1(B _G)$ acts nontrivially on $H^*(X).$  Then,  the $E_2$ term of the Leray-Serre spectral sequence of the fibration  $ X \stackrel{i} \hookrightarrow X_G \stackrel{\pi} \rightarrow B_G$ is given by
	\[
	E_2^{k,i}=
	\begin{cases}
		\text{ker $\tau$} & \mbox{for}~ ~k=0, \\
		\text{ker $\tau$/im $\sigma$} & \text{for } k>0,\\

	\end{cases}
	\]
	where $\tau = \sigma = 1+g^*,$ $g^*$ is induced by a generator $g$ of $G.$
\end{proposition}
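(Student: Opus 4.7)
The plan is to identify the $E_2$ page of the Leray-Serre spectral sequence of the Borel fibration with the group cohomology of $G=\mathbb{Z}_2$ acting on $H^*(X;\mathbb{Z}_2)$ via the monodromy, and then to compute that group cohomology explicitly by means of the standard periodic free resolution.

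First I would recall that, for any fibration, the $E_2$ term of the Leray-Serre spectral sequence takes the form $E_2^{k,i}=H^k(B;\mathcal{H}^i(F;\mathbb{Z}_2))$ with twisted coefficients determined by the $\pi_1(B)$-action on the fiber cohomology. Specialising to $X\hookrightarrow X_G\to B_G$ with $G=\mathbb{Z}_2$, the base $B_G=\mathbb{RP}^\infty$ is a $K(G,1)$, so the local coefficient system $\mathcal{H}^i(X;\mathbb{Z}_2)$ is completely encoded by the $\pi_1(B_G)\cong G$-module structure on $H^i(X;\mathbb{Z}_2)$. Under the standing hypothesis that this action is nontrivial, it is given by the involution $g^*$ induced by the generator $g$ of $G$, which endows $M:=H^i(X;\mathbb{Z}_2)$ with a genuine $\mathbb{Z}_2[G]$-module structure. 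The classical identification of cohomology of a $K(G,1)$ with local coefficients as group cohomology then gives $E_2^{k,i}\cong H^k(G;M)$.

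Next I would compute $H^k(G;M)$ using the standard periodic free $\mathbb{Z}_2[G]$-resolution of the trivial module $\mathbb{Z}_2$,
\begin{equation*}
\cdots \xrightarrow{1+g} \mathbb{Z}_2[G] \xrightarrow{1+g} \mathbb{Z}_2[G] \xrightarrow{1+g} \mathbb{Z}_2[G] \xrightarrow{\epsilon} \mathbb{Z}_2 \longrightarrow 0,
\end{equation*}
whose exactness is immediate because $(1+g)^2=1+g^2=0$ in characteristic $2$ (since $g^2=1$), and $\ker\epsilon=(1+g)\mathbb{Z}_2[G]$ is the augmentation ideal. Applying $\Hom_{\mathbb{Z}_2[G]}(-,M)$ identifies each term with $M$ and each connecting differential with multiplication by $1+g$, which acts on $M$ as the endomorphism $\tau=\sigma=1+g^*$. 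Taking cohomology of the resulting complex $M\xrightarrow{\tau}M\xrightarrow{\sigma}M\xrightarrow{\sigma}\cdots$ yields $E_2^{0,i}=\ker\tau$ and $E_2^{k,i}=\ker\tau/\im\sigma$ for every $k>0$, which are exactly the formulas in the statement.

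The argument is entirely standard homological algebra once the identification $E_2^{k,i}\cong H^k(G;M)$ has been pinned down; the one point requiring a little care is tracking the boundary case $k=0$, where the absence of an incoming differential yields only the kernel $\ker\tau$ rather than the quotient $\ker\tau/\im\sigma$, while for all $k>0$ the periodicity of the resolution forces the uniform formula.
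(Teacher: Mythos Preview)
Your argument is correct and is precisely the standard derivation: identify the $E_2$ page with group cohomology $H^k(G;H^i(X;\mathbb{Z}_2))$ via the fact that $B_G$ is a $K(G,1)$, then compute using the periodic free resolution of $\mathbb{Z}_2$ over $\mathbb{Z}_2[G]$, where in characteristic $2$ the two differentials $1-g$ and $1+g$ coincide as $\tau=\sigma=1+g^*$.

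There is nothing to compare against here: the paper does not prove this proposition at all. It is quoted in the preliminaries section as a known result from Bredon's book \cite{bredon}, with no accompanying argument. Your write-up is essentially the proof one finds in that reference (or in any standard treatment of the spectral sequence of a covering), so it is entirely appropriate. One minor remark: the hypothesis that the action is nontrivial is not actually needed for the formula itself---the identification $E_2^{k,i}=\ker\tau/\im\sigma$ for $k>0$ and $\ker\tau$ for $k=0$ holds regardless; when the action is trivial, $\tau=\sigma=0$ and the formula collapses to $E_2^{k,i}\cong H^i(X;\mathbb{Z}_2)$ for all $k\ge 0$, consistent with the untwisted K\"unneth description. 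The paper singles out the nontrivial case only because that is where the formula has nontrivial content.
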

	\begin{proposition}\label{prop 6}(\cite{bredon})
	Let $G=\mathbb{Z}_2$ act on a finitistic space $X.$ Then, for any $a \in H^n(X),$ the element  $ag^*(a)$ is permanent cocycle in spectral sequence of $X_G \rightarrow B_G.$
\end{proposition}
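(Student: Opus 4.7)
The plan is to realize $a\cdot g^*(a)$ as the restriction to the fibre of an honest equivariant class $\alpha\in H^{2n}(X_G;\mathbb{Z}_2)$. Any element in the image of $i^*:H^{2n}(X_G;\mathbb{Z}_2)\to H^{2n}(X;\mathbb{Z}_2)$ necessarily survives to $E_\infty^{0,2n}$, so exhibiting such a lift immediately proves the proposition, bypassing any direct computation of the differentials. As a preliminary sanity check, graded-commutativity modulo $2$ together with $(g^*)^2=\mathrm{id}$ gives
\[
(1+g^*)\bigl(a\cdot g^*(a)\bigr) \;=\; a\cdot g^*(a) + g^*(a)\cdot a \;=\; 0,
\]
so $a\cdot g^*(a)\in\ker(1+g^*)$ already sits on $E_2^{0,2n}$, irrespective of whether $\pi_1(B_G)$ acts trivially or non-trivially on $H^*(X;\mathbb{Z}_2)$ (cf.\ Proposition~\ref{nontrivial}).

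To construct the lift I would let $\Sigma_2 = G$ act on $X\times X$ by the swap $(x,y)\mapsto(y,x)$ and consider the graph-of-$g$ map
\[
f : X \longrightarrow X\times X, \qquad f(x) = (x, gx).
\]
A direct check shows $f$ is $G$-equivariant: $f(gx)=(gx,x)=\mathrm{swap}\,f(x)$. Passing to Borel constructions over a common $EG$ therefore yields a map $f_G : X_G \to (X\times X)_G$, where the right-hand side uses the swap action. The external square $a\times a\in H^{2n}(X\times X;\mathbb{Z}_2)$ is manifestly swap-invariant, and the Steenrod--Dold power construction supplies a canonical equivariant lift $P(a)\in H^{2n}((X\times X)_G;\mathbb{Z}_2)$ restricting to $a\times a$ along the fibre inclusion. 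Setting $\alpha := f_G^{\,*}\,P(a)$ and invoking naturality,
\[
i^*(\alpha) \;=\; f^*(a\times a) \;=\; (\mathrm{pr}_1\!\circ\! f)^*(a)\cdot(\mathrm{pr}_2\!\circ\! f)^*(a) \;=\; a\cdot g^*(a),
\]
which is exactly the identity needed.

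The only non-formal step is the existence of the equivariant lift $P(a)$ for the swap $\Sigma_2$-action on $X\times X$; this is classical Steenrod--Dold material and crucially exploits both that $G=\mathbb{Z}_2=\Sigma_2$ and that the coefficient ring is $\mathbb{Z}_2$. If one preferred a fully self-contained proof in the finitistic setting, $P(a)$ can be built by hand from a \v{C}ech cocycle representative of $a$ via a cup-$i$ style equivariant refinement on the two-sided bar complex computing $H^*_G(X\times X;\mathbb{Z}_2)$. Either way, once the lift $P(a)$ is in hand the remainder of the argument is a one-line naturality calculation.
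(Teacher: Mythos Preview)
The paper does not supply a proof of this proposition; it is quoted verbatim from Bredon's book \cite{bredon} and used as a black box. Your argument is correct and is in fact the standard one underlying Bredon's treatment (Chapter~VII): the class $a\cdot g^*(a)$ is the multiplicative norm $N_G(a)$, and the Steenrod extended-power construction furnishes an equivariant lift. The graph-of-$g$ map $f(x)=(x,gx)$ intertwining the given $G$-action on $X$ with the swap $\Sigma_2$-action on $X\times X$ is exactly the right device, and pulling back the total power $P(a)$ along $f_G$ gives the desired preimage under $i^*$. So your proof is not merely consistent with the paper---it recovers the argument the paper is citing.
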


	\noindent Recall  that 
$H^*(\mathbb{S}^n \times \mathbb{S}^m \times \mathbb{S}^l; \mathbb{Z}_2)=\mathbb{Z}_2[a,b,c]/<{a^{2},b^2, c^2}>,$ where deg $a=n,$ deg $b=m$ and  deg $c=l, ~1 \leq n \leq m\leq l.$\\
\noindent Throughout the paper,  
$H^*(X)$ will denote the \v{C}ech cohomology of a space $X$ with coefficient group $G=\mathbb{Z}_2,$ and   $X\sim_2 Y,$  means $H^*(X;\mathbb{Z}_2 )\cong H^*(Y;\mathbb{Z}_2).$

\section{Fixed point Sets of involutions on the product of three spheres}
Let $G=\mathbb{Z}_2$ act on a finitistic space  $X\sim_2 \mathbb{S}^n \times \mathbb{S}^m \times \mathbb{S}^l,$ $1\leq n \leq m \leq l.$ If the fixed point set is empty, then the cohomology ring of the orbit space $X/G$ has been discussed in \cite{dimpi}.
In this section, we determine the possibilities of nonempty fixed points sets of involutions on $X.$ First, we determine the possibilities of nonempty connected fixed point sets of involutions on $X.$
\subsection{Connected fixed point sets of involutions on $X$}
\noindent Suppose that  $X$ is TNHZ in $X_{\mathbb{Z}_2},$ then we have 
  $\sum$ rk $H^i(F) =\sum$ rk $H^i(X)=8.$ By Proposition \ref{thm 1}, we get  $F$  is   poincar\'{e} duality space  of formal dimension $r\leq n+m+l.$  Let $0 < q_1 \leq q_2 \leq q_3\leq q_4\leq q_5\leq q_6 < r$ be the  non vanishing dimensions of   $  H^*(F),$ and ${u_i}$ is generator of $q_i^{th}$ cohomology group of $F,1 \leq i \leq 6$ and $v$ is generator of $H^r(F).$ Assume  that  $\alpha\in H^n(X_G), \beta\in H^m(X_G)$ and $\gamma\in H^l(X_G)$ represents generators  $a,b$ and $c$ of $H^*(X,x),$ respectively, such that  $\eta_x(\alpha)=\eta_x(\beta)=\eta_x(\gamma)=0.$ By Proposition \ref{2.2},  we get $\{1,\alpha, \beta, \gamma, \alpha\beta ,\alpha\gamma,\beta\gamma,\alpha\beta\gamma \}$ forms a basis for $H^*(X_G)$ over $H^*(B_G)$-module. Note that the images of generators of $H^*(X_G)$  under the homomorphism $j^*:H^*(X_G)\rightarrow H^*(F_G)$ involves generators of $H^*(F).$ Since $a^2=b^2=c^2=0,$ we get $\alpha^2,\beta^2$ and $\gamma^2$ may be zero or nonzero, and hence $u_iu_j,1\leq i,j\leq 6,$ may be zero or nonzero. Further, if each generator $u_i$ is poincar\'{e} dual of itself or any two generators are poincar\'{e} dual of itself or any four generators are poincar\'{e} dual of itself, then we get $H^*(F)$ must have  more than three  generators, which contradicts Proposition \ref{puppe}.  As $q_i\leq q_{i+1},1\leq i\leq 5,$ we get $u_1, u_2$ and $ u_3$ are  poincar\'{e} duals of $u_6, u_5$ and $ u_4,$   respectively. So, we have $u_1u_6=u_2u_5=u_3u_4 =v \in H^r(F).
$ Clearly, $q_4=r-q_3,q_5=r-q_2$ and $q_6=r-q_1,$  and $u_2u_6=u_3u_5=u_3u_6=u_4u_5=u_4u_6=u_5u_6=v^2=0~\&~u_iv=0, 1\leq i \leq 6.$  Thus, the possibilities of the fixed point set will depend on the cup products   $u_1u_2,u_1u_3,u_1u_4,u_1u_5,u_2u_3$ and $u_2u_4$ which may be zero or nonzero. So, we have considered the cases depending upon how many cup products out of these six are simultaneously nonzero.
\begin{theorem}\label{3.1}
	Let $G=\mathbb{Z}_2$ act on $X\sim_2 \mathbb{S}^n \times \mathbb{S}^m \times \mathbb{S}^l,$ where $1\leq n \leq m \leq l$ and $X$ is TNHZ in $X_G.$ If $F$  is connected and none of the  cup products $u_1u_2,u_1u_3,u_1u_4,u_1u_5,u_2u_3$ and $u_2u_4$ of generators of $H^*(F)$ is nonzero, then $F \sim_2\underset{3}{\#} \mathbb{P}
	^3(q), q\leq n$ and $q=1,2,4,8.$
	\end{theorem}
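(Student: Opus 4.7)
The plan is to first extract the full ring structure of $H^*(F)$ from the vanishing hypotheses, and then to determine the common generator degree $q$ via the Borel spectral sequence and Adams's theorem.

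For the algebra structure, I would combine Puppe's bound (Proposition~\ref{puppe}: at most three algebra generators) with the relations $u_1u_2=u_1u_3=u_2u_3=0$ supplied by the hypothesis to conclude that $H^*(F;\mathbb{Z}_2)$ is built from three polynomial chains $\mathbb{Z}_2[u_i]/(u_i^{k_i+1})$ sharing only the unit and the fundamental class $v$. The paper has already established the pairings $u_1u_6=u_2u_5=u_3u_4=v$; since cross-products among $u_1,u_2,u_3$ vanish, these force $u_6$ into the $u_1$-chain, $u_5$ into the $u_2$-chain, $u_4$ into the $u_3$-chain, and moreover $u_i^{k_i}=v$ with $k_i\ge 2$ for each $i$. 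Counting $\sum\mathrm{rk}\,H^i(F)=8$ then gives $k_1+k_2+k_3=9$, leaving the three unordered partitions $(3,3,3)$, $(2,3,4)$, $(2,2,5)$.

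I would next eliminate the last two. For $(2,2,5)$ the two lowest-degree classes $u_1=w_3$ and $u_2=w_3^2$ both come from the chain of length $5$, whence $u_1u_2=w_3^3\ne 0$, contradicting the hypothesis. For $(2,3,4)$, write $w_1,w_2,w_3$ for the three chain-generators with $w_1^2=w_2^3=w_3^4=v$; then $u_1=w_3$, $u_2=w_2$, and the middle group $H^{r/2}(F)=\langle w_1,w_3^2\rangle$ is two-dimensional, so $u_3$ and $u_4$ together span it. Expanding $u_1u_3$ and $u_1u_4$ in this basis shows that both products can vanish only when both $u_3,u_4$ lie in $\langle w_1\rangle$, which is one-dimensional and so cannot contain two independent classes; hence one of $u_1u_3, u_1u_4$ must be nonzero, again contradicting the hypothesis. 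Only $(3,3,3)$ survives, yielding $q_1=q_2=q_3=q$, $q_4=q_5=q_6=2q$, $r=3q$, and $H^*(F;\mathbb{Z}_2)\cong H^*(\underset{3}{\#}\mathbb{P}^3(q);\mathbb{Z}_2)$.

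It remains to prove $q\le n$ and $q\in\{1,2,4,8\}$. Suppose $q>n$: then $H^i(F)=0$ for $0<i\le n$, and since $\eta_x^*(\alpha)=0$ kills the $t^n\otimes 1$ term of $j^*(\alpha)$, we obtain $j^*(\alpha)=0$. Multiplicativity of $j^*$ gives $j^*(\alpha\beta\gamma)=0$, hence $j^*(t\alpha\beta\gamma)=0$; as $t\alpha\beta\gamma$ has degree $1+n+m+l$ exceeding the top dimension of $X$, Proposition~\ref{thm 2} forces $t\alpha\beta\gamma=0$, contradicting the freeness of $H^*(X_G)$ as an $H^*(B_G)$-module (Proposition~\ref{2.2}). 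The restriction $q\in\{1,2,4,8\}$ follows from the unstable Steenrod axiom $\mathrm{Sq}^q(u_i)=u_i^2\ne 0$ combined with Adams's Hopf-invariant-one theorem, since for other $q$ the operation $\mathrm{Sq}^q$ is decomposable and vanishes on $u_i$ for degree reasons. The main obstacle I expect is the case elimination of $(2,3,4)$: the two-dimensional middle cohomology admits many $\mathbb{Z}_2$-bases, and one must verify that no choice is compatible with all six prescribed vanishing relations. The remaining arguments reduce to direct applications of Propositions~\ref{puppe}, \ref{thm 2}, \ref{2.2}, and Adams's theorem.
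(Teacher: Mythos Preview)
Your overall strategy is sound and different from the paper's: the paper computes $j^*(\alpha\beta\gamma)$ directly, uses injectivity of $j^*$ (from TNHZ) to conclude that at least one of $u_1^3,u_2^3,u_3^3$ is nonzero, and then invokes Puppe's bound to rule out the cases where only one or two of these cubes survive. You instead try to read off the ring structure of $H^*(F)$ from Puppe's bound and the vanishing hypotheses alone, and then eliminate partitions.

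There is, however, a genuine gap in your first step. You assert that Puppe's bound together with $u_1u_2=u_1u_3=u_2u_3=0$ already forces the three-chain structure with chain generators $u_1,u_2,u_3$. This would require $u_1,u_2,u_3$ to be algebraically indecomposable, and those three relations alone do not give that: nothing yet prevents, say, $u_2=u_1^2$ with $u_1^3=0$. What does prevent it is the \emph{remaining} hypotheses. From $u_1u_5=0$ and $u_2u_5=v$ one gets that $u_2$ cannot be a power of $u_1$ (else $u_2u_5=u_1^{k-1}(u_1u_5)=0$); from $u_1u_4=u_2u_4=0$ and $u_3u_4=v$ one gets that $u_3$ cannot be a polynomial in $u_1,u_2$. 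Only after this do you know that $u_1,u_2,u_3$ are precisely the three algebra generators, and then $u_iu_j=0$ gives the chain structure.

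Your step 4 also conflicts with step 1 notationally: having declared $u_1,u_2,u_3$ to be the chain generators, you cannot then write $u_2=w_3^2$. Once the gap above is filled and $u_1,u_2,u_3$ are the generators, the partition elimination is in fact cleaner than what you propose: multiplying $u_4=\sum c_iu_i^{a_i}$ by $u_1,u_2,u_3$ and using the hypotheses forces $u_4=u_3^{k_3-1}$, and similarly $u_5=u_2^{k_2-1}$, $u_6=u_1^{k_1-1}$; since $u_4,u_5,u_6$ are linearly independent from $u_3,u_2,u_1$, each $k_i\ge 3$, which with $\sum k_i=9$ gives $(3,3,3)$ immediately, without any degree-sorting. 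Your arguments for $q\le n$ (via injectivity of $j^*$) and $q\in\{1,2,4,8\}$ (via Adams) are fine, though the paper obtains $q\le n$ more quickly from the Floyd inequality (Proposition~\ref{prop1}).
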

\begin{proof}
As discussed above, we have $H^*(X_G)$ is the free  $H^*(B_G)$-module with basis $\{1,\alpha, \beta, \gamma, \alpha\beta ,\alpha\gamma,\beta\gamma,\alpha\beta\gamma \}.$ Then we write	$$j^*(\alpha) = A_1t^{n-q_1}\otimes u_1 + A_2t^{n-q_2}\otimes {u_2}+ A_3t^{n-q_3}\otimes {u_3}+ A_4t^{n-q_4}\otimes {u_4}+ $$ $$A_5t^{n-q_5}\otimes  {u_5}+ A_6t^{n-q_6}\otimes {u_6}+ A_7t^{n-r}\otimes {v},$$ 
	$$j^*(\beta) = A_1^{'}t^{m-q_1}\otimes u_1 + A_2^{'}t^{m-q_2}\otimes {u_2}+ A_3^{'}t^{m-q_3}\otimes {u_3}+ A_4^{'}t^{m-q_4}\otimes {u_4}+ $$ $$A_5^{'}t^{m-q_5}\otimes  {u_5}+ A_6^{'}t^{m-q_6}\otimes {u_6}+ A_7^{'}t^{m-r}\otimes {v} ,$$ 
	$$j^*(\gamma) = A_1^{''}t^{l-q_1}\otimes u_1 + A_2^{''}t^{l-q_2}\otimes {u_2}+ A_3^{''}t^{l-q_3}\otimes {u_3}+ A_4^{''}t^{l-q_4}\otimes {u_4}+ $$ $$A_5^{''}t^{l-q_5}\otimes  {u_5}+ A_6^{''}t^{l-q_6}\otimes {u_6}+ A_7^{''}t^{l-r}\otimes {v} ,$$ where all $A_i,A'_i,A''_i \in \mathbb{Z}_2~\forall~ 1\leq i\leq 7.$ As all cup products are zero, we get $u_4^2=u_5^2=u_6^2=0.$ So, we have  
	$$j^*(\alpha \beta \gamma) = A_1 A_1^{'}A_1^{''}t^{l-q_1}\otimes u_1^3 +  A_2 A_2^{'}A_2^{''}t^{l-q_2}\otimes {u_2^3}+  A_3 A_3^{'}A_3^{''}t^{l-q_3} \otimes u_3^3.$$
	As $j^*$ is injective and $\alpha\beta\gamma \neq 0,$ we get  $j^*(\alpha \beta \gamma) \neq 0.$ Thus, at least one of  $u_1^3,u_2^3$ and $u_3^3$ must be nonzero. Now, we consider following cases: \\ 
\textbf{Case (i)}~ Any one of $u_1^3,u_2^3$ and $u_3^3$ is nonzero.\\
	Assume $u_1^3 \neq 0$ and $u_2^3 =u_3^3 = 0.$ We must have $u_2^2=u_3^2=0$ and  $u_6=u_1^2.$ Thus, we get  $u_1^3=u_3u_4=u_2u_5=v$ is generator of  $H^r(F).$ Clearly, the cohomology algebra  of the  fixed point set $F$ has five generators which contradicts Proposition \ref{puppe}. 
So, this case not possible.
	\\
\textbf	{Case (ii)}~ Any two of $u_1^3,u_2^3$ and $u_3^3$ are nonzero.\\
	Assume $u_1^3 \neq 0,u_2^3 \neq 0$ and $u_3^3 = 0.$ Then, we must have $u_6=u_1^2, u_5 = u_2^2,$ $u_3^2=0$  and   $u_1^3=u_2^3=u_3u_4=v$ generator of  $H^r(F).$ Clearly, $q_1=q_2$ and $q_5=q_6.$ It is easy to observe that  the cohomological algebra of the fixed point set $F$ has four generators, a contradiction. So, this  case is also not possible. \\
\textbf{Case(iii)}~  $u_1^3,u_2^3$ and $u_3^3$ are nonzero.\\
	Clearly, $u_1^2,u_2^2$ and $u_3^2$ are  all  nonzero, and we must have   $u_4=u_3^2, u_5=u_2^2$ and $u_6=u_1^2.$ Therefore, $u_1^3=u_2^3=u_3^3 $ is generator of $H^r(F).$ In this case, we must have  $q_1=q_2=q_3.$ So, we get $u_1,u_2,u_3$ are generator of degree $q,$ and $u_1^2,u_2^2,u_3^2$ are generators of degree $2q.$ Thus, the cohomological algebra of fixed point set $F$ is given by $${\mathbb{Z}_2[u_1,u_2,u_3]}/{<u_1^4,u_2^4,u_3^4,u_1^3+u_2^3,u_2^3+u_3^3,u_1^3+u_3^3,u_1u_2,u_2u_3,u_1u_3>},$$ where deg $u_1=$ deg $u_2=$ deg $u_3=q.$  
	Thus, $F \sim_2 \underset{3}{\#} \mathbb{P}^3(q).$ By Proposition \ref{prop1}, we get $q\leq n,$ where   $q=1,2,3,4$ \cite{adam}. 
	\end{proof}
\begin{theorem}\label{3.2}
	Let $G=\mathbb{Z}_2$ act on $X\sim_2 \mathbb{S}^n \times \mathbb{S}^m \times \mathbb{S}^l,$ where $1\leq n \leq m \leq l$ and $X$ is TNHZ in $X_G.$ If $F$ is connected and one cup product from $u_1u_2,u_1u_3,u_1u_4,u_1u_5,u_2u_3$ and $u_2u_4$ of generators of $H^*(F)$ is nonzero, then $F \sim_2 \mathbb{P}^3(r_1) \# (\mathbb{P}^2(r_2)\times \mathbb{S}^{r_3} ),$ where $min~\{r_1,r_2\}\leq n $ and $r_1,r_2=1,2,4,8.$
\end{theorem}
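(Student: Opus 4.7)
The plan is to follow the framework of Theorem \ref{3.1} and split the analysis into six subcases according to which one of $u_1u_2, u_1u_3, u_1u_4, u_1u_5, u_2u_3, u_2u_4$ is the unique nonzero product (the other five being zero). I would retain the expansions for $j^*(\alpha), j^*(\beta), j^*(\gamma)$ introduced there, and use that $j^*$ is a monomorphism (Proposition \ref{thm 2} applied with $A = \emptyset$) together with $\alpha\beta\gamma \neq 0$ to force $j^*(\alpha\beta\gamma) \neq 0$.

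For each subcase, I would combine Poincar\'{e} duality ($u_1u_6 = u_2u_5 = u_3u_4 = v$) with the unique nonzero cross product to decide which squares $u_i^2$ must be nonzero and to identify them as appropriate $u_j$'s or as $v$. The degree identity implied by $u_iu_j \neq 0$ in middle degree then yields linear relations among the $q_i$; for instance, if $u_1u_5 \neq 0$ in middle degree, then $u_1u_5$ is identified with $u_6$, giving $q_2 = 2q_1$. I would then invoke Proposition \ref{puppe} to discard those subcases whose resulting algebra has more than three generators, and use the explicit expansion of $j^*(\alpha\beta\gamma)$ to eliminate those inconsistent with injectivity of $j^*$.

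The surviving subcases produce the algebra $\mathbb{Z}_2[x,y,z]/(x^4, y^3, z^2, xy, xz, x^3 + y^2z)$ with $\deg x = r_1, \deg y = r_2, \deg z = r_3$ satisfying $3r_1 = 2r_2 + r_3$, which is precisely $H^*(\mathbb{P}^3(r_1) \# (\mathbb{P}^2(r_2) \times \mathbb{S}^{r_3}))$. The bound $\min\{r_1, r_2\} \leq n$ follows from Proposition \ref{prop1} applied at the appropriate degree: the smallest degree in which $H^*(F)$ has a generator cannot exceed the smallest degree $n$ in which $H^*(X)$ has one. Finally, Adams' Hopf invariant theorem \cite{adam} forces $r_1, r_2 \in \{1, 2, 4, 8\}$, since $\mathbb{P}^3(r_1)$ and $\mathbb{P}^2(r_2)$ exist with the required cohomology algebra only for these dimensions.

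The main obstacle is the systematic bookkeeping of the six subcases, especially when degree coincidences (some $q_i$'s equal) allow a single product $u_iu_j$ to be identifiable with several candidates among the $u_k$'s; each such degeneracy has to be tracked separately to avoid over- or under-counting algebras. A secondary subtlety is correctly recognizing the resulting algebra as the claimed connected sum rather than a different algebra of the same ranks; this requires pinning down the multiplicative structure (in particular the vanishing of $xy$ and $xz$ and the top-degree identification $x^3 = y^2z$) in each surviving case.
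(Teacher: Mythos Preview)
Your plan matches the paper's proof: split into six cases according to the nonzero product, identify that product among $u_4,u_5,u_6$ via Poincar\'e duality, track which squares $u_i^2$ are forced, discard subcases with more than three generators via Proposition~\ref{puppe}, and read off the connected-sum algebra with Adams giving $r_1,r_2\in\{1,2,4,8\}$. Two minor points: the paper does not actually invoke the $j^*(\alpha\beta\gamma)$ expansion in this theorem (that tool was needed in Theorem~\ref{3.1} but here Poincar\'e duality plus Puppe suffice), and the bound $\min\{r_1,r_2\}\le n$ is obtained case by case by observing that one of $r_1,r_2$ is always $q_1$, rather than from the generic smallest-generator-degree argument you sketch (which a priori only gives $\min\{r_1,r_2,r_3\}\le n$).
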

\begin{proof}
First, assume that $u_1u_2 \neq 0$ and $u_1u_3=u_1u_4=u_1u_5=u_2u_3=u_2u_4=0.$\\ It is easy to see that
	either  $u_1u_2=u_5$ or $u_1u_2=u_6.$ Suppose that $u_1u_2=u_5.$ Then, we have  $u_1u_6=u_2^2u_1=u_3u_4$ is generator of $H^r(F).$ As all the cup products $u_1u_4,u_1u_5,u_2u_3$ and $u_2u_4$ are zero, we must have  $u_2^2=u_6$ and $u_4^2=0.$ Now, we consider four cases: (i) $u_1^2=u_3^2=0$ (ii) $u_1^2\neq 0, u_3^2=0$ (iii) $u_1^2=0,u_3^2\neq 0,$ and (iv) $u_1^2\neq 0,u_3^2\neq 0.$\\ \textbf{Case (i):} $u_1^2=u_3^2=0.$\\ In this case, the cohomology algebra of the fixed point set has four generators, namely, $u_1,u_2,u_3$ and $u_4,$ which contradicts Proposition \ref{puppe}. \\ \textbf{Case (ii):}  $u_1^2\neq 0$ and $u_3^2=0.$\\ Since, $u_1u_3=u_1u_4=u_1u_5=0,$ we must have  $u_6=u_1^2.$ So, $u_2^2u_1=u_1^3=u_3u_4$ is generator of $H^r(F).$ Thus, the cohomological algebra of the fixed point set is generated by four generators $u_1,u_2,u_3$ and $u_4,$ a contradiction. \\ \textbf{Case (iii):} $u_1^2= 0$ and $u_3^2\neq0.$\\ In this case, we get $u_4=u_3^2$ and $q_1=q_2=q_3.$ Thus, $H^{2q_1}(F)$ is generated by $u_3^2,u_1u_2,u_2^2,$ and $u_2^2u_1=u_3^3$ generates $H^{3q_1}(F).$  Hence, the cohomological  algebra of the fixed point set is given by  
	$$ {\mathbb{Z}_2[u_1,u_2,u_3]}/{<u_1^2,u_2^3,u_3^4,u_3^3+u_2^2u_1,u_1u_3,u_2u_3 >},$$ where deg $u_1=$ deg $u_2=$ deg $u_3=q_1.$ Thus, $F\sim_2 \mathbb{P}^3(q_1) \# (\mathbb{P}^2(q_1)\times \mathbb{S}^{q_1}).$ By Proposition \ref{prop1}, we get $ q_1\leq n$ and $q_1=1,2,4,8$ \cite{adam}.	
	\\ \textbf{Case (iv):}  $u_1^2\neq 0$ and $u_3^2\neq0.$\\ In this case,  we get $u_4=u_3^2$ and $u_6=u_1^2=u_2^2.$  So, $u_2^2u_1=u_1^3=u_3^3$ is generator of $H^r(F),$ and   $q_1=q_2=q_3.$ By the change of basis, $d=u_1+u_2,$ we get $d^2=0,$ $u_2d=u_2u_1+u_2^2\neq 0, u_6d=u_1u_6\neq 0.$ Clearly, $u_3d=u_4d=u_5d=0$ and $u_2^2d=u_3^2$ is generator of $H^{3q_1}(F).$ Hence,  $F\sim_2 \mathbb{P}^3(q_1) \# (\mathbb{P}^2(q_1)\times \mathbb{S}^{q_1} ).$  \\ Now, we  suppose   $u_1u_2=u_6.$ Then,  $u_1^2u_2=u_2u_5=u_3u_4$  generates   $H^r(F).$ Clearly, $u_5=u_1^2$ and   $u_4^2=0.$ Now, $u_2^2$ and $u_3^2$ are either zero or nonzero. Similarly, as discussed  above,  if  $\{u_2^2=0$ and $u_3^2=0\}$ or $\{u_2^2\neq 0$ and $u_3^2= 0 \},$ then the cohomology ring $H^*(F)$ has four generators, which is not possible.  Now, if $u_2^2=0$ and $u_3^2\neq 0,$ then  we  must have $u_4=u_3^2.$ Thus, $q_1=q_2=q_3,$ and $u_1^2u_2=u_3^3$  generates $H^{3q_1}(F).$ Hence, the cohomological algebra of the fixed point set is given by  
		$$ {\mathbb{Z}_2[u_1,u_2,u_3]}/{<u_1^3,u_2^2,u_3^4,u_3^3+u_1^2u_2,u_1u_3,u_2u_3 >},$$ where deg $u_1=$ deg $u_2=$ deg $u_3=q_1.$ Thus, $F\sim_2 \mathbb{P}^3(q_1) \# (\mathbb{P}^2(q_1)\times \mathbb{S}^{q_1} ),$  $ q_1\leq n.$	Finally, if $u_2^2$ and $u_3^2$ both are nonzero, then we get  $u_4=u_3^2$ and $u_5=u_1^2=u_2^2.$ By the change of basis, $d=u_1+u_2,$ we get $F\sim_2 \mathbb{P}^3(q_1) \# (\mathbb{P}^2(q_1)\times \mathbb{S}^{q_1} ),q_1\leq n.$
		
		Similarly, if $u_1u_3\neq 0$ or $u_2u_3\neq 0,$ then $F\sim_2  \mathbb{P}^3(q_1) \# (\mathbb{P}^2(q_1)\times \mathbb{S}^{q_1} ).$\\
		Next, assume that $u_1u_4\neq 0.$  Here, we must have  $u_1u_4=u_6,$ and $u_1^2u_4=u_2u_5=u_3u_4$ is generator of $H^r(F).$ Clearly, $u_4^2=u_5^2=0$ and $u_3=u_1^2.$ If $u_2^2=0,$ then cohomology algebra of the fixed point set has four generators, which is not possible. If  $u_2^2\neq 0,$ then $u_5=u_2^2$ and we get $F\sim_2 \mathbb{P}^3(q_2) \# (\mathbb{P}^2(q_1)\times \mathbb{S}^{q_4} ),$ $q_1\leq n $ and $q_1,q_2=1,2,4,8.$
		
		  Similarly, if $u_1u_5\neq 0,$ then $F\sim_2 \mathbb{P}^3(q_2) \# (\mathbb{P}^2(q_1)\times \mathbb{S}^{q_5} ),$  and if $u_2u_4\neq 0,$ then $F\sim_2 \mathbb{P}^3(q_1) \# (\mathbb{P}^2(q_2)\times \mathbb{S}^{q_4} ).$ This completes the proof.
	 \end{proof}
\begin{theorem} \label{3.3}
	Let $G=\mathbb{Z}_2$ act on $X\sim_2 \mathbb{S}^n \times \mathbb{S}^m \times \mathbb{S}^l,$ where $1\leq n \leq m \leq l$ and $X$ is TNHZ in $X_G.$ If $F$ is connected and two cup products from $u_1u_2,u_1u_3,u_1u_4,u_1u_5,u_2u_3$ and $u_2u_4$  of generators of $H^*(F)$ are nonzero, then $F$ must be  one of the following:
	\begin{enumerate}
		
		\item $F \sim_2 (\mathbb{P}^5(r_1) \#  ({\mathbb{S}^{r_2} \times \mathbb{S}^{r_3} }),$ where $r_1=1,2,4,8$ and $ min \{r_1,r_2,r_3\} \leq n.$
		\item $F \sim_2 (\mathbb{P}^2(r_1) \# \mathbb{P}^2(r_1))\times \mathbb{S}^{r_2},$ where $r_1=1,2,4,8$ and $min \{r_1,r_2\} \leq n. $
			\item $F \sim_2 {\mathbb{Z}_2[x,y,z]}/{<x^3,u^3,v^2,xv+u^2,yz >},$ where $(u,v)\in\{(y,z),(z,y)\}$ and $(\mbox{deg~} x,\mbox{deg~} y,\mbox{deg~} z)\in\{(q_1,q_2,q_3),(q_1,q_2,q_4),(q_2,q_1,q_3),(q_1,q_3,q_5),(q_3,q_1,q_2)\}.$

		\item $F \sim_2 {\mathbb{Z}_2[x,y,z]}/{<x^2,y^3,z^3,y^2+z^2+ux,yz >},$ where $u\in \{y,z\}$ and  $(\mbox{deg~} x,\\ \mbox{deg~} y,\mbox{deg~} z)\in\{(q_1,q_2,q_3),(q_2,q_1,q_3),(q_3,q_1,q_2)\}.$    
		
	\end{enumerate}
	
\end{theorem}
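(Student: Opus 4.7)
The plan is to follow exactly the case-analysis template established in Theorems~\ref{3.1} and~\ref{3.2}: enumerate the unordered pairs of simultaneously nonzero cup products from the list $\{u_1u_2,u_1u_3,u_1u_4,u_1u_5,u_2u_3,u_2u_4\}$, there being $\binom{6}{2}=15$ in principle, and for each such pair deduce the resulting algebra structure of $H^*(F)$, ruling out those configurations that force $H^*(F)$ to need more than three algebra generators.

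The standing toolkit is the same in every case: (i) the Poincar\'e pairing already pins $u_1u_6=u_2u_5=u_3u_4=v$ and kills every other product of two ``upper half'' generators, so a nonzero $u_iu_j$ with $i\le j\le 3$ (or with one of the indices in $\{4,5\}$) must coincide with whichever of $u_4,u_5,u_6$ occupies total degree $q_i+q_j$, and this forces an equality among the $q_i$'s; (ii) since $\alpha\beta\gamma\ne 0$ and $j^*$ is injective by Proposition~\ref{thm 2}, the explicit formula for $j^*(\alpha\beta\gamma)$ obtained from the expansions written down before Theorem~\ref{3.1} requires at least one triple product among $u_i^3$, $u_i^2u_j$, $u_iu_ju_k$ to be nonzero; (iii) by Proposition~\ref{puppe}, $H^*(F)$ is generated by at most three elements as an algebra, which discards every case whose deduced multiplication genuinely needs a fourth generator; and (iv) Proposition~\ref{prop1} gives $\min\{r_i\}\le n$, while the Hopf invariant obstruction \cite{adam} restricts the degree of any $\mathbb{P}^k(q)$ component to $q\in\{1,2,4,8\}$.

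For each of the fifteen pairs I would: first, decide which of $u_4,u_5,u_6$ is forced to equal a product of the lower generators $u_1,u_2,u_3$; second, split into subcases according to whether each of $u_1^2,u_2^2,u_3^2$ is zero, equal to one of $u_4,u_5,u_6$, or free, exactly as in the $\{u_1^2,u_3^2\}$-split of Theorem~\ref{3.2}; third, whenever two low-degree generators share a common degree, apply a change of basis such as $d=u_1+u_2$ to symmetrize and absorb an apparently ``extra'' generator; and fourth, read off the presentation of $H^*(F)$ and match it against the four listed types. Configurations in which the two nonzero products combine to build a fifth power of a single class fall under (1), those in which two low generators play the r\^ole of the $\mathbb{P}^2$-summands and the third acts as a sphere factor fall under (2), and the genuinely mixed configurations, in which a product of distinct generators coincides with the square of a third, produce (3) and (4).

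The main obstacle will be purely organizational: fifteen cases with several subcases each, in which degree equalities of the form $q_i+q_j=q_k$ must remain simultaneously compatible with the Poincar\'e pairing, with every imposed squaring relation, and with the outcome of the triple-product computation. A secondary subtlety is certifying that no subcase produces an algebra outside the four advertised isomorphism types; here the change-of-basis trick $d=u_1+u_2$ (together with its $u_1\leftrightarrow u_2$, $u_1\leftrightarrow u_3$ analogues) is essential, because many ostensibly different pairs collapse onto the same presentation only after such a linear substitution and the verification that no information about $\pi^*\colon H^*(B_G)\to H^*(X_G)$ is lost in the process.
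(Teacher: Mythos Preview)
Your proposal is correct and follows essentially the same approach as the paper: a $\binom{6}{2}=15$-case enumeration, using Poincar\'e duality to identify each nonzero product $u_iu_j$ with some $u_k$ (not only $u_4,u_5,u_6$---in several cases $u_1u_2=u_3$ or $u_1u_3=u_4$ occurs), splitting on the vanishing of the low-degree squares, applying the change-of-basis $d=u_i+u_j$ when two degrees coincide, and invoking Proposition~\ref{puppe} to discard configurations needing four generators. The paper's only organizational shortcut beyond what you describe is to dispose of six of the fifteen pairs at the outset (those forcing contradictory Poincar\'e duals or incompatible zero-products) before treating the remaining nine in detail.
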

\begin{proof}
	Clearly, we have  $6 \choose 2$$=15$ cases depending on pairs of nonzero  cup products.	It is easy to see that out of these fifteen cases, the following six nonzero pairs are not possible: (1) $u_1u_2, u_2u_4$ (2) $u_1u_3, u_2u_4$ (3) $u_1u_4, u_1u_5$ (4) $u_1u_4, u_2u_3$ (5) $u_1u_5, u_2u_3,$ and (6) $u_1u_5, u_2u_4.$ 	A contradiction of these cases occurs from  the generators whose cup products are zero. Now, we consider remaining nonzero pairs of cup products:\\
	{Case (7):}  $u_1u_2\neq 0~\&~ u_1u_5\neq 0.$ \\
	As $u_1u_3=u_1u_4=u_2u_3=u_2u_4=0,$  we must have either $u_1u_5=u_1u_5=u_6$
	or \{$u_1u_2=u_5$ and  $u_1u_5=u_6$\}. First, suppose that $u_1u_2=u_1u_5=u_6.$ Then, $u_1^2u_2=u_1^2u_5=u_2u_5=u_3u_4$ is generator of $H^r(F).$ So, $u_1^2\neq 0.$ As $u_1u_3=u_1u_4=0, $ we have either $u_1^2=u_2$ or $u_1^2=u_5.$ Further, in either cases, $u_2$ or $u_5$ have  two poincar\'{e} duals, a contradiction. \\ Next, suppose that  $u_1u_2=u_5$ and $u_1u_5=u_6.$ Then, we have $u_1u_5=u_1^2u_2$  and $u_1^3u_2=u_2^2u_1=u_3u_4$ is generator of $H^r(F).$ Since, $u_1u_3=u_1u_4=0,$ we have $u_2=u_1^2.$ Thus, $u_1u_2=u_1^3,u_1^2u_2=u_1^4$ and $u_1^5=u_3u_4$ are generators of $H^{q_5}(F),H^{q_6}(F)$ and $H^r(F),$ respectively. Clearly, $u_4^2=0.$ If $u_3^2\neq 0,$ then we must have $u_4=u_3^2,$ and hence   $F\sim_2 \mathbb{P}^5(q_1)\#\mathbb{P}^3(q_3),$ where $5q_1=3q_3.$ By Adam \cite{adam}, this is not possible.  If $u_3^2= 0,$ then we get $F\sim_2 \mathbb{P}^5(q_1)\# (\mathbb{S}^{q_3}\times \mathbb{S}^{q_4}).$ By Proposition \ref{prop1}, $q_1\leq n.$ This realizes possibility (1).

	Similarly, for Case (8)  $u_1u_3\neq 0~\&~ u_1u_4\neq 0,$ we get $F\sim_2 \mathbb{P}^5(q_1)\# (\mathbb{S}^{q_2}\times \mathbb{S}^{q_5}),$ and  for Case (9) $u_2u_3\neq 0~\&~ u_2u_4\neq 0,$ we get $F\sim_2 \mathbb{P}^5(q_6)\# (\mathbb{S}^{q_1}\times \mathbb{S}^{q_6}).$ So, these cases also realizes possibility (1). 	 \\
	{Case (10):} $u_1u_4\neq 0~\&~ u_2u_4\neq 0.$\\
	In this case, we must have $u_2u_4=u_5$ and $u_1u_4=u_6.$ Thus, we have $u_1^2u_4=u_2^2u_4=u_3u_4$ is a generator of  $H^r(F).$ As $u_1u_2=u_1u_3=u_1u_5=u_2u_3=0,$ we get $u_1^2=u_2^2=u_3.$ Clearly, $q_1=q_2$ and $u_4^2=0.$ Hence, $F\sim_2 (\mathbb{P}^2(q_1)\# \mathbb{P}^2(q_1))\times \mathbb{S}^{q_4}, q_1\leq n.$ This realizes possibility (2).\\
	{Case (11):}	 $u_1u_2\neq 0$ and $u_1u_4\neq 0.$\\
	In this case, we get either $u_1u_4=u_1u_5=u_6$ or \{$u_1u_2=u_5$ and $u_1u_4=u_6$\}. If $u_1u_2=u_1u_4=u_6,$ then by uniquesness of poincar\'{e} dual, this is not possible.\\ Next, Suppose that $u_1u_2=u_5$ and $u_1u_4=u_6.$ Then, we have $u_1^2u_4=u_2^2u_1=u_3u_4$ generates of $H^{r}(F).$ Clearly,  $u_1^2=u_3,$  $u_2^2=u_6$ and $u_4^2=0.$ Consequently, $u_1^3=u_2^3=u_4^2=0.$ Hence, the cohomology ring of the fixed point set is isomorphic to $${\mathbb{Z}_2[u_1,u_2,u_4]}/{<u_1^3,u_2^3,u_4^2,u_1u_4+u_2^2,u_2u_4>} 
	$$ where deg $u_1=q_1,$  deg $u_2=q_2$ and  deg $u_4=q_4.$ This realizes possibility (3) for $(u,v)=(y,z),$ $x=u_1,y=u_2$ and $z=u_4.$
	
			Similarly, for Case (12)  $u_1u_3\neq 0~\&~ u_1u_5\neq 0,$  the cohomology ring of the fixed point set is isomorphic to $${\mathbb{Z}_2[u_1,u_3,u_5]}/{<u_1^3,u_3^3,u_5^2,u_1u_5+u_3^2,u_3u_5>} 
			$$ where deg $u_1=q_1,$  deg $u_3=q_3$ and  deg $u_5=q_5.$ This also realizes possibility (3) for $(u,v)=(y,z),$ $x=u_1,y=u_3$ and $z=u_5.$	 \\
{Case (13):} $u_1u_2\neq 0$ and $u_1u_3\neq 0.$\\
	In this case, we have following subcases (i) $u_1u_2=u_1u_3=u_6$ (ii) $u_1u_2=u_5~\&~u_1u_3=u_4$ (iii) $u_1u_2=u_5~\&~u_1u_3=u_6,$ and  (iv) $u_1u_2=u_6~\&~u_1u_3=u_4.$ Note that  subcase (i) is not possible.\\ Subcase (ii):  $u_1u_2=u_5$ and $u_1u_3=u_4.$ Then, we have $u_2^2u_1=u_3^2u_1=u_1u_6$  generates $H^r(F).$ It is easy to see that $u_6=u_3^2=u_2^2$ is generator of $H^{q_6}(F).$ Then, $q_2=q_3.$ Now, if $u_1^2=0,$ then $F\sim_2 (\mathbb{P}^3(q_2)\# \mathbb{P}^3(q_2))\times \mathbb{S}^{q_1}.$ If $u_1^2\neq 0,$ then we must have $u_1^2=u_6.$ So, we get $q_1=q_2=q_3.$ By the change of basis $d=u_1+u_2,$ we get  $F\sim_2 (\mathbb{P}^3(q_1)\# \mathbb{P}^3(q_1))\times \mathbb{S}^{q_1}.$ These realizes possibility (2).\\
	Subcase (iii): $u_1u_2=u_5$ and $u_1u_3=u_6.$ Then,  $u_1^2u_3=u_2^2u_1=u_3u_4$  generates $H^r(F).$ Clearly, $u_4=u_1^2$ and $u_1u_3=u_2^2.$ Consequently,  $u_1^3=u_2^3=0.$ If $u_3^2=0,$ then the cohomology ring of the fixed point set  is isomorphic to 
	$${\mathbb{Z}_2[u_1,u_2,u_3]}/{<u_1^3,u_2^3,u_3^2,u_1u_3+u_2^2,u_2u_3 >}, 
	$$ where deg $u_1=q_1,$  deg $u_2=q_2$ and  deg $u_3=q_3.$ This  realizes possibility (3) for $(u,v)=(y,z),$  $x=u_1,y=u_2$ and $z=u_3.$ If $u_3^2\neq 0,$ then we get $u_4=u_3^2=u_1^2.$ Thus, $q_1=q_2=q_3,$ and by change of basis $d=u_1+u_3,$ we get $\{d,u_2,u_3\}$ generates $H^{q_1}(F),$ $\{u_3d,u_2d,u_2^2=u_3^2+u_3d\} $ generates $H^{2q_1}(F)$ and $u_2^2d=u_3^2d$ generates $H^{3q_1}(F).$  Thus,  the cohomology ring $H^*(F)$ is isomorphic to $${\mathbb{Z}_2[d,u_2,u_3]}/{<d^2,u_2^3,u_3^3,u_3d+u_2^2+u_3^2,u_2u_3 >}, 
	$$ where deg $d=$deg $u_2=$deg $u_3=q_1.$ This realizes possibility (4) for $u=z,$  $x=d,y=u_2$ and $z=u_3.$\\
	 Subcase (iv):  $u_1u_2=u_6$ and $u_1u_3=u_4.$ Then,   $u_1^2u_2=u_3^2u_1$ generates $H^r(F).$ It is easy to see that  $u_5=u_1^2$ and $u_6=u_3^2.$ If $u_2^2=0,$ then the cohomology ring of the fixed point set is isomorphic to $${\mathbb{Z}_2[u_1,u_2,u_3]}/{<u_1^3,u_2^2,u_3^3,u_1u_2+u_3^2,u_2u_3 >},$$ where deg $u_1=q_1,$  deg $u_2=q_2$ and  deg $u_3=q_3.$ This  realizes possibility (3) for $(u,v)=(z,y)$ and $x=u_1,y=u_2~\&~z=u_3.$ Now, if $u_2^2\neq 0,$ then $u_5=u_1^2=u_2^2,$ and again by the  change of basis $d=u_1+u_2,$ we get $H^*(F)$ is isomorphic to $${\mathbb{Z}_2[d,u_2,u_3]}/{<d^2,u_2^3,u_3^3,u_2d+u_2^2+u_3^2,u_2u_3 >}, 
	 $$ where deg $d=$ deg $u_2=$ deg $u_3=q_1.$ This realizes possibility (4) for $u=y,$ $x=d,y=u_2$ and $z=u_3.$

	Similarly, for Case (14)  $u_1u_3\neq 0~\&~ u_2u_3\neq 0,$ we realize possibility (2), possibility (3) for $(u,v)=(y,z) ~\&~(z,y);$  $x=u_2,y=u_1~\&~z=u_3,$ and  possibility (3) for $u=y~\&~z;$ $x=u_2,y=d~\&~z=u_3,$  and for Case (15)  $u_1u_2\neq 0~\&~ u_2u_3\neq 0,$  we realize possibility (2), possibility (3) for $(u,v)=(y,z)~\&~(z,y);$  $x=u_3,y=u_2~\&~z=u_1,$  and possibility (3) for $u=y~\&~z;$ $x=u_1,y=u_2~\&~z=d.$
\end{proof}
\begin{theorem}\label{3.4}
	Let $G=\mathbb{Z}_2$ act on $X\sim_2 \mathbb{S}^n \times \mathbb{S}^m \times \mathbb{S}^l,$ where $1\leq n \leq m \leq l$ and $X$ is TNHZ in $X_G.$ If $F$ is connected and three cup products from $u_1u_2,u_1u_3,u_1u_4,u_1u_5,u_2u_3$ and $u_2u_4$ of generators of $H^*(F)$ are nonzero. Then, $F$ must be one of the following:
	\begin{enumerate}
		
		
		\item $F \sim_2 {\mathbb{Z}_2[x,y]}/{<x^6,y^3,x^4+y^2>},$ where deg $x=q_1$ and deg $y\in \{ q_2,q_3\}.$
					\item $F \sim_2 {\mathbb{Z}_2[x,y,z]}/{<x^{2+i},y^{2+j},z^{2+k},a_i(yz+x^i),a_j(xz+y^{j}),a_k(xy +z^k)>},$ where $i,j,k\in \{0,2\}$ or $i=j=k=1,a_0=0,a_1=a_2=1$ and deg $x=q_1,$ deg $y=q_2~\&$ deg $z\in \{q_3, q_4\}.$ In particular, for $i=j=k=0,$ $F \sim_2 \mathbb{S}^{r_1} \times \mathbb{S}^{r_2} \times \mathbb{S}^{r_3},$ where $r_1\leq n, r_2\leq m$ and $r_3\leq l.$ 
		 
	\end{enumerate}
\end{theorem}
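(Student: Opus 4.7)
My plan is to continue the pattern of Theorems~\ref{3.1}--\ref{3.3}: exhaust all $\binom{6}{3}=20$ ways in which exactly three of the cup products in $\{u_1u_2,u_1u_3,u_1u_4,u_1u_5,u_2u_3,u_2u_4\}$ are nonzero, rule out the inconsistent triples, and for each surviving triple determine $H^*(F)$ by pinning down the squares $u_1^2,u_2^2,u_3^2$ and applying a suitable change of basis, exactly as in the earlier theorems.

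The first step is elimination. Any nonzero product $u_iu_j$ with $q_i+q_j<r$ must land in $H^{q_k}(F)$ for some $k\in\{4,5,6\}$, and the uniqueness of Poincar\'e duals forbids two distinct nonzero products from equalling the same $u_k$ unless the underlying factors force a coincidence of generators. This immediately discards, for example, any triple containing $\{u_1u_2,u_1u_3,u_1u_4\}$ (three products of $u_1$ cannot occupy the three available targets $u_4,u_5,u_6$ without repetition), and any triple containing both $u_1u_5$ and $u_2u_4$ (both forced to equal $u_6$). The same test together with the vanishing of the three complementary products eliminates the majority of the twenty triples in advance.

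For each surviving triple, the three zero cup products constrain each of $u_1^2,u_2^2,u_3^2$ to be either $0$ or a specific $u_k$, and the requirement $j^*(\alpha\beta\gamma)\neq 0$ fixes the top class $v$. Whenever two squares coincide (say $u_i^2=u_j^2$) I would apply a change of basis such as $d=u_i+u_j$ so that $d^2=0$, following the template of Case (iv) of Theorem~\ref{3.2} and subcases (iii)--(iv) of Case (13) of Theorem~\ref{3.3}; this collapses several superficially different rings to a common form. Proposition~\ref{puppe} rules out branches producing more than three algebra generators, Proposition~\ref{prop1} forces the smallest degree to satisfy $q_1\le n$, and Adams' theorem \cite{adam} restricts the degrees that can support a nontrivial cube to $\{1,2,4,8\}$.

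The surviving rings should then reorganise into the two stated families: possibility~(1), a two-generator Dold-type ring of height six with the identification $x^4=y^2$, which emerges when a single square sustains a long projective tower while a second generator contributes the spherical piece; and possibility~(2), the three-generator family uniformly parametrised by $(i,j,k)$ that captures the pure sphere-product $(0,0,0)$, its partial twists given by $yz=x^2$, $xz=y^2$, or $xy=z^2$ (entries equal to $2$), together with the borderline case $(1,1,1)$. The main obstacle is the sheer combinatorial size of the analysis together with the bookkeeping required to confirm Poincar\'e duality and the correct formal dimension at each branch; a secondary technical point is recognising which distinct triples yield isomorphic rings after a change of basis, so that the final list consolidates to the compact form stated.
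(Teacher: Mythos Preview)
Your overall strategy---exhaust the $\binom{6}{3}=20$ triples, eliminate the inconsistent ones, and analyse the survivors---is exactly the paper's approach. However, your elimination step contains a concrete error that would cause you to miss possibility~(1) entirely.

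You claim that the triple $\{u_1u_2,u_1u_3,u_1u_4\}$ is ``immediately discarded'' because ``three products of $u_1$ cannot occupy the three available targets $u_4,u_5,u_6$ without repetition.'' This is false. Nothing prevents the assignment $u_1u_3=u_4$, $u_1u_2=u_5$, $u_1u_4=u_6$: the degrees are compatible (they force $q_3=2q_1$, $q_2=q_5-q_1$, etc.), and no Poincar\'e dual is duplicated. In the paper this is precisely Case~(1), and it is one of only four surviving triples. From $u_1u_4=u_1^2u_3=u_6$ and $u_1u_5=0$ one deduces $u_3=u_1^2$, hence $u_4=u_1^3$, $u_6=u_1^4$, and $v=u_1^5=u_1u_2^2$, giving the two-generator ring of possibility~(1) with $\deg y=q_2$. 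The companion triple $\{u_1u_2,u_1u_3,u_1u_5\}$ (also three products of $u_1$, hence presumably also discarded by your criterion) survives by the same mechanism and yields possibility~(1) with $\deg y=q_3$.

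The four triples that actually survive are $\{u_1u_2,u_1u_3,u_1u_4\}$, $\{u_1u_2,u_1u_3,u_1u_5\}$, $\{u_1u_2,u_1u_3,u_2u_3\}$, and $\{u_1u_2,u_1u_4,u_2u_4\}$; the first two give possibility~(1), the last two give possibility~(2). Your heuristic for elimination needs to be replaced by a genuine case check: a triple is discarded only when the zero cup products force a contradiction with the assumed nonzero ones or with uniqueness of Poincar\'e duals, not merely because several products share a common factor.
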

\begin{proof}
	Here, we have ${6\choose 3}=20$ cases depending on which three nonzero cup products are taken from $u_1u_2,u_1u_3,u_1u_4,u_1u_5,u_2u_3$ and $u_2u_4.$ Out of these twenty cases, the following four cases are possible: (1) $u_1u_2\neq0, u_1u_3\neq0~\&~ u_1u_4\neq0,$ (2) $u_1u_2\neq0, u_1u_3\neq0~\&~u_1u_5\neq0,$ (3) $u_1u_2\neq0, u_1u_3\neq0~\&~u_2u_3\neq0,$ and  (4) $u_1u_2\neq0, u_1u_4\neq0~\&~u_2u_4\neq0.$\\
	Case (1)  $u_1u_2 \neq 0, u_1u_3\neq 0$ and $u_1u_4\neq 0.$\\
	In this case, we must have $u_1u_3=u_4,u_1u_2=u_5,$ and $u_1u_4=u_6.$ Thus, $u_1u_4=u_1^2u_3$ and $u_1u_2^2=u_1^3u_3=u_1u_3^2$ is the generator of $H^r(F).$ So, $u_3^2,u_2^2 $ and $u_1^3$ are nonzero. Since, $u_1u_5=0,$ we must get $u_3=u_1^2.$ Thus, $u_4=u_1^3,$ $u_6=u_1^4,$ and $u_1^5=u_2^2u_1.$ Hence, the cohomology ring $H^*(F)$ is isomorphic to  
		$${\mathbb{Z}_2[u_1,u_2]}/{<u_1^6,u_2^3,u_2^2+u_1^4>,}$$ where deg $u_1=q_1$ and deg $u_2=q_2.$ This realize possibility (1) for $x=u_1,y=u_2$ and deg $y=q_2.$ \\
		Similarly, for Case (2) $u_1u_2 \neq 0, u_1u_3\neq 0,$ $u_1u_5\neq 0,$ we realizes possibility (1) for $x=u_1,y=u_3$ and deg $y=q_3.$\\
	Case(3) $u_1u_2 \neq 0, u_1u_3\neq 0$ and $u_2u_3\neq 0.$\\
	As the cup products $u_1u_4,u_1u_5$ and $u_2u_3$ are zero, only the  following subcases are possible: (i) $u_1u_2=u_4,u_1u_3=u_5~\&~u_2u_3=u_6,$ (ii) $u_1u_2=u_5,u_1u_3=u_6~\&~u_2u_3=u_4,$ and (iii) $u_1u_2=u_6,u_1u_3=u_4~\&~u_2u_3=u_5.$\\
	 Subcase (i) $u_1u_2=u_4,u_1u_3=u_5$ and $u_2u_3=u_6.$ Then, $u_1u_2u_3$ is the generator of $H^r(F).$ Now, if $u_1^2,u_2^2$ and $u_3^2$ are all zero, then $F\sim_2 \mathbb{S}^{q_1} \times \mathbb{S}^{q_2} \times \mathbb{S}^{q_3}.$ By Proposition \ref{prop1}, we get $r_1\leq n, r_2\leq m$ and $r_3\leq l.$ This realizes possibility (2) for $i=j=k=0.$ 
	  
	  Suppose that  $u_1^2,u_2^2$ and $u_3^2$  are all nonzero. Then, we must get $u_4=u_3^2,u_5=u_2^2$ and $u_6=u_3^2.$ Clearly, $u_1^3=u_2^3=u_3^3=u_1u_2u_3$ and  $u_1^4=u_2^4=u_3^4=0.$ Thus, the cohomology ring $H^*(F)$ is isomorphic to 
	$${\mathbb{Z}_2[u_1,u_2,u_3]}/{<u_1^4,u_2^4,u_3^4,u_1u_2+u_3^2,u_1u_3+u_2^2,u_2u_3+u_1^2 >}, $$ where deg $u_1=q_1,$  deg $u_2=q_2$ and  deg $u_3=q_3.$ This realizes possibility (2) for $i=j=k=2.$
	
	 Suppose that any two of  $u_1^2,u_2^2$ and $u_3^2$ are nonzero. Assume  $u_1^2$ and $u_2^2$ are nonzero and $u_3^2=0.$ Then, $u_5=u_2^2$ and $u_6=u_1^2.$ So, the cohomology ring $H^*(F)$ is isomorphic to
		$${\mathbb{Z}_2[u_1,u_2,u_3]}/{<u_1^4,u_2^4,u_3^2,u_1u_3+u_2^2,u_2u_3+u_1^2 >}, $$ where deg $u_1=q_1,$  deg $u_2=q_2$ and  deg $u_3=q_3.$ This realizes possibility (2) for $i=j=2 ~\&~k=0.$
	 
	 Finally, suppose that  one  of  $u_1^2,u_2^2$ and $u_3^2$ is nonzero. Assume  $u_1^2\neq 0 $ and $u_2^2=u_3^2=0.$ Then, $u_6=u_1^2.$ So, the cohomology ring $H^*(F)$ is isomorphic to
	 $${\mathbb{Z}_2[u_1,u_2,u_3]}/{<u_1^4,u_2^2,u_3^2,u_2u_3+u_1^2 >}, $$ where deg $u_1=q_1,$  deg $u_2=q_2$ and  deg $u_3=q_3.$ This realizes possibility (2) for $i=2~\&~j=k=0.$\\
	 Subcase (ii) $u_2u_3=u_4,u_1u_2=u_5$ and $u_1u_3=u_6.$ Then,   $u_1^2u_3=u_2^2u_1=u_3^2u_2$ generates  $H^r(F).$ Clearly, $u_1^2=u_4,u_3^2=u_5$ and $u_2^2=u_6.$ Thus,
	 the cohomology ring $H^*(F)$ is isomorphic to
	 $${\mathbb{Z}_2[u_1,u_2,u_3]}/{<u_1^3,u_2^3,u_3^3,u_2u_3+u_1^2,u_1u_2+u_3^2,u_1u_3+u_2^2 >}, $$ where deg $u_1=q_1,$  deg $u_2=q_2$ and  deg $u_3=q_3.$ This realizes possibility (2) for $i=j=k=1.$\\
	 Subcase (iii) $u_2u_3=u_5,u_1u_2=u_6$ and $u_1u_3=u_4.$ In this subcase, we get same cohomology ring $H^*(F)$ as in subcase (ii).	\\
	 Case (4) $u_1u_2\neq0, u_1u_4\neq0,u_2u_4\neq0.$\\
	 In this case, we must have $u_1u_2=u_3,u_1u_4=u_5$ and $u_2u_4=u_6.$ So, $u_1u_2u_4$ is generator of $H^r(F).$ If $u_4^2\neq 0,$ then either $u_4^2=u_5$ or $u_6,$ it fails the uniqueness of poincar\'{e} dual of a generating element. So,  $u_4^2=0.$ Now, if $u_1^2=u_2^2=0,$ then   $F\sim_2 \mathbb{S}^{q_1} \times \mathbb{S}^{q_2} \times \mathbb{S}^{q_3}.$ This  realizes possibility (2) for $i=j=k=0.$ If both $u_1^2$ and $u_2^2$ are nonzero, then we get $u_6=u_1^2$ and $u_5=u_2^2.$ Thus,  the cohomology ring $H^*(F)$ is isomorphic to
	 $${\mathbb{Z}_2[u_1,u_2,u_4]}/{<u_1^4,u_2^4,u_4^2,u_1u_4+u_2^2,u_2u_4+u_1^2 >}, $$ where deg $u_1=q_1,$  deg $u_2=q_2$ and  deg $u_4=q_4.$ This  realizes possibility (2) for $i=j=2~\&~k=0.$\\
	 Finally, if any one of $u_1^2$ and $u_2^2$ is nonzero, say $u_1^2\neq 0,$ then  $u_6=u_1^2.$ Thus, the cohomology ring $H^*(F)$ is isomorphic to
	 $${\mathbb{Z}_2[u_1,u_2,u_4]}/{<u_1^4,u_2^2,u_4^2,u_2u_4+u_1^2 >}, $$ where deg $u_1=q_1,$  deg $u_2=q_2$ and  deg $u_4=q_4.$ This realizes possibility (2) for $i=2~\&~j=k=0.$\\
	  It is easy to observe that the remaining sixteen cases can be discarded  by the zero cup products or by the uniqueness of poincar\'{e} duals.	 \end{proof}  
 \begin{theorem}\label{3.5}
 		Let $G=\mathbb{Z}_2$ act on $X\sim_2 \mathbb{S}^n \times \mathbb{S}^m \times \mathbb{S}^l,$ where $1\leq n \leq m \leq l$ and $X$ is TNHZ in $X_G.$ If $F$ is connected and  any four cup products from $u_1u_2,u_1u_3,u_1u_4,u_1u_5,u_2u_3$ and $u_2u_4$  of generators of $H^*(F)$ are nonzero. Then, $F$ must be one of the  following:
 	\begin{enumerate}
 		
 		\item $F \sim_2 \mathbb{P}^3(r_1) \times \mathbb{S}^{r_2},$ where $ r_1 =1,2,4,8.$
 		\item $F \sim_2 {\mathbb{Z}_2[x,y]}/{<x^4,y^4,x^3+y^2,y^2x >},$ where deg $x=q_1$ and deg $y=q_2.$ 
 	\end{enumerate}	
 	\end{theorem}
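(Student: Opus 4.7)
The plan is to mirror the case-by-case strategy used in Theorems \ref{3.3} and \ref{3.4}, now applied to the $\binom{6}{4}=15$ possible selections of four nonzero cup products from the list $u_1u_2,u_1u_3,u_1u_4,u_1u_5,u_2u_3,u_2u_4$. For each such selection, the two vanishing products impose rigid constraints, and each nonzero product must, by degree matching, equal a specific element of $\{u_4,u_5,u_6\}$. The Poincaré duality pairings $u_1u_6=u_2u_5=u_3u_4=v$, together with the uniqueness of Poincaré duals \cite{char}, drive most of the analysis.

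First I would dispose of the inadmissible selections. The recurring obstructions are: (i) two distinct nonzero products are forced to coincide with the same element of $\{u_4,u_5,u_6\}$ that already serves as a Poincaré dual of another generator, violating uniqueness; (ii) the resulting algebra requires a fourth algebra generator, contradicting Proposition \ref{puppe}; (iii) the forced degree equalities are incompatible with $q_1\leq q_2\leq \cdots\leq q_6$. Based on the pattern of Theorems \ref{3.3}--\ref{3.4}, I expect the majority of the fifteen cases to be eliminated quickly by these three mechanisms.

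For each surviving case I would determine which of the squares $u_1^2,u_2^2,u_3^2$ vanish and which equal one of $u_4,u_5,u_6$, then read off the resulting presentation. Whenever two squares land on the same generator, the linear change of basis $d=u_i+u_j$ (used in Theorems \ref{3.2} and \ref{3.3}) simplifies the ring. I anticipate exactly two surviving algebras. The first is $\mathbb{Z}_2[x,y]/\langle x^4,y^2\rangle$, arising in the generic situation where $u_3=u_1^2$, $u_5=u_1^3$, and $u_2^2=0$; this yields $F\sim_2 \mathbb{P}^3(r_1)\times \mathbb{S}^{r_2}$ (possibility (1)). The second arises when the additional equality $u_2^2=u_5=u_1^3$ is forced, producing the relations $x^3=y^2$ and $y^2x=0$; this yields the twisted algebra of possibility (2). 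After identifying each case, I would apply Proposition \ref{prop1} to obtain the dimension bound $r_1\leq n$ and invoke \cite{adam} to restrict $r_1\in\{1,2,4,8\}$.

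The main obstacle I foresee is the organizational one: with fifteen primary cases each subdividing according to which of $u_1^2,u_2^2,u_3^2$ are nonzero, one must be careful to prune via the uniqueness of Poincaré duals and Puppe's generator bound at every branching point, and to verify that no genuinely new algebra type beyond (1) and (2) arises. As in the earlier theorems, each inadmissible branch should terminate after only a few lines, so the analysis remains tractable despite the bookkeeping.
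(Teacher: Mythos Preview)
Your plan is correct and follows essentially the same approach as the paper: a case split over the $\binom{6}{4}=15$ selections, elimination of nine of them via the uniqueness of Poincar\'e duals and Puppe's generator bound, and in the six surviving cases a reading-off of the relations (with the change of basis $d=u_i+u_j$ where needed) to land on either $\mathbb{P}^3(r_1)\times\mathbb{S}^{r_2}$ or the twisted algebra of possibility~(2). The paper parametrizes the cases by which \emph{two} products vanish rather than which four are nonzero, but this is only a cosmetic difference.
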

 \begin{proof}
 	We consider $6 \choose 4$$= 15$ cases, depending on which pair of cup products $u_1u_2,u_1u_3,\\u_1u_4,u_1u_5,u_2u_3,u_2u_4$ are zero and rest four cup products are nonzero. Out of these fifteen cases, the following six cases are possible: (i) $u_1u_3=u_2u_3=0,$ (ii) $u_1u_3=u_1u_5=0,$ (iii) $u_1u_4=u_1u_5=0,$ (iv) $u_1u_4=u_2u_4=0,$
(v) $u_1u_5=u_2u_3=0,$ and (vi) $u_1u_5=u_2u_4=0.$ \\ 
Case (i) $u_1u_3=u_2u_3=0.$ \\ We have $u_1u_2,u_1u_4,u_1u_5$ and $u_2u_4$ are nonzero cup products. Clearly, $u_1u_5=u_6.$ By the uniqueness of poincar\'{e} dual of  $u_1^2,$ we must have $u_1u_4=u_5.$ Similarly,  we get $u_1u_2=u_3$ and $u_1u_5=u_2u_4=u_6.$ Then, $u_1^3u_4=u_1u_2u_4$ is the generator of $H^r(F).$ It is easy to see that, $u_2=u_1^2$ and $u_4^2=0.$ Thus, $0=u_1u_3=u_1^4,$ and hence $F \sim_2 \mathbb{P}^3(q_1) \times \mathbb{S}^{q_4}.$ Note that $q_1 =1,2,4,8.$ This realize possibility (1)\\Similarly, for Case (ii)~\&~ (iii), we get $F \sim_2 \mathbb{P}^3(q_2) \times \mathbb{S}^{q_1},$ for Case (iv), we get $F \sim_2 \mathbb{P}^3(q_1) \times \mathbb{S}^{q_3},$ and for Case (v), we get $F \sim_2 \mathbb{P}^3(q_1) \times \mathbb{S}^{q_2}.$ This also realizes possibility (1).\\
Case (vi) $u_1u_5=u_2u_4=0.$\\
As $u_1u_5=u_2u_4=0,$ we get $u_1u_4=u_6,$ $u_1u_2=u_4,u_1u_3=u_5$ and $u_1u_4=u_2u_3=u_6.$ Thus, $u_1^3u_2=u_1u_2u_3$ generates $H^r(F).$ Clearly, $u_1^2=u_3.$ So, if $u_2^2=0,$ then $F \sim_2 \mathbb{P}^3(q_1) \times \mathbb{S}^{q_2}.$ This realizes posibility (1). If $u_2^2\neq 0,$ then we must have $u_5=u_1^3=u_2^2$ and $u_1^3u_2=u_2^3$ generates $H^r(F).$ We have
$0=u_1u_5=u_1^4$ and  $0=u_2u_4=u_2^2u_1.$ Hence,  the cohomology ring $H^*(F)$ is isomorphic to 
$$ {\mathbb{Z}_2[u_1,u_2]}/{<u_1^4,u_2^4,u_1^3+u_2^2,u_2^2u_1 >},$$ where deg $u_1=q_1$ and deg $u_2=q_2.$ This realizes possibility (2).\\ It is easy to observe that the remaining nine cases are not possible.
\end{proof}
\begin{theorem}\label{3.6}
Let $G=\mathbb{Z}_2$ act on $X\sim_2 \mathbb{S}^n \times \mathbb{S}^m \times \mathbb{S}^l,$ where $1\leq n \leq m \leq l$ and $X$ is TNHZ in $X_G.$ If $F$ is connected and  any five cup products from $u_1u_2,u_1u_3,u_1u_4,u_1u_5,u_2u_3$ and $u_2u_4$  of generators of $H^*(F)$ are nonzero. Then,  $F \sim_2 \mathbb{P}^3(r_1) \times \mathbb{S}^{r_1},$ where $ r_1 =1,2,4,8.$
\end{theorem}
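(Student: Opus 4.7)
The plan is to mirror the structure of the previous theorems and carry out a case analysis on which single cup product among $u_1u_2$, $u_1u_3$, $u_1u_4$, $u_1u_5$, $u_2u_3$, $u_2u_4$ is zero, giving $\binom{6}{5}=6$ subcases labelled (i)--(vi). The key tools are the always-nonzero poincar\'{e} duality identities $u_1u_6 = u_2u_5 = u_3u_4 = v$, the uniqueness of poincar\'{e} duals in a fixed homogeneous basis, the vanishing products $u_2u_6 = u_3u_5 = u_3u_6 = u_4u_5 = u_4u_6 = u_5u_6 = 0$ catalogued before Theorem \ref{3.1}, and the three-generator bound from Proposition \ref{puppe}. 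In each subcase a degree comparison forces each of the five nonzero products to equal one of $u_3, u_4, u_5, u_6$ (possibly as a linear combination, to be absorbed by a later change of basis).

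The first step in each subcase will be to rule out $u_iu_j = v$ outside the three duality pairs: such an equality would make two distinct basis elements poincar\'{e} dual to the same generator, contradicting uniqueness. I expect this argument to eliminate four of the six subcases outright. The two remaining subcases, namely $u_1u_5 = 0$ and $u_2u_4 = 0$, should turn out to be equivalent under the relabeling permitted by the equal-degree pairs in the resulting cohomology.

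Focusing on $u_1u_5 = 0$, the nonvanishing of $u_1u_3$, $u_1u_4$, $u_2u_3$, $u_2u_4$ combined with uniqueness of PD will force the degree identities $q_1+q_3=q_5$, $q_1+q_4=q_6$, $q_2+q_3=q_6$, $q_2+q_4=q_5$. Together with $q_4=r-q_3$, $q_5=r-q_2$, $q_6=r-q_1$, these collapse to $q_1=q_2$, $q_3=q_4=2q_1$, $q_5=q_6=3q_1$, and $r=4q_1$. Inspecting the squares $u_i^2$ and performing a change of basis of the type $d = u_1+u_2$ or $e = u_3+u_4$ already used in Theorems \ref{3.3} and \ref{3.4} should produce generators $x, y \in H^{q_1}(F)$ with $y^2=0$, $x^4=0$, $x^3y\neq 0$, identifying $H^*(F)$ with $\mathbb{Z}_2[x,y]/\langle x^4, y^2\rangle$ and hence $F \sim_2 \mathbb{P}^3(q_1)\times\mathbb{S}^{q_1}$. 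Adams' theorem then restricts $q_1$ to $\{1,2,4,8\}$, and Proposition \ref{prop1} gives the bound $q_1\leq n$.

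The main obstacle is the bookkeeping: since pairs of the $q_i$ coincide in the answer, the basis $\{u_1,\ldots,u_6\}$ is not canonical within each degree, and different \emph{five-nonzero} patterns may correspond to the same abstract $F$ via a permutation of equal-degree generators. Verifying that the non-obvious substitutions $u_i\mapsto u_i+u_j$ yield a valid basis (preserving uniqueness of PD) and produce exactly the relations of $\mathbb{Z}_2[x,y]/\langle x^4,y^2\rangle$ is the most error-prone step.
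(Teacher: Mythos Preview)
Your overall plan matches the paper's: a six-way case split on which single product vanishes, elimination via Poincar\'e-duality uniqueness, and a change of basis $d=u_1+u_2$ to exhibit the $\mathbb{P}^3(q)\times\mathbb{S}^q$ structure. Two concrete points in your sketch, however, do not line up with what actually happens.

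First, your expectation that both $u_1u_5=0$ and $u_2u_4=0$ survive is incorrect: only $u_1u_5=0$ does. In the putative case $u_2u_4=0$, the nonvanishing of $u_1u_5$ forces $u_1u_5=u_6$, whence $u_1^2u_5=u_1u_6=v$; comparing with $u_2u_5=v$ yields $u_1^2=u_2$. Then $u_2u_4=u_1^2u_4=u_1\cdot(u_1u_4)$, and since $u_1u_4\in\{u_5,u_6\}$ this equals $u_1u_5=u_6$ or $u_1u_6=v$, neither of which is zero. So this case is contradictory, not ``equivalent by relabelling'' to the other; the relabelling you have in mind only becomes available \emph{after} one has already derived $q_1=q_2$ from the surviving case.

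Second, within the surviving case $u_1u_5=0$ you assert the specific pattern $u_1u_3=u_5$, $u_1u_4=u_6$, $u_2u_3=u_6$, $u_2u_4=u_5$ as if it were forced. It is not: the paper has to split into two subcases according to whether $u_2u_4=u_6$ or $u_2u_4=u_5$, and the resulting identifications differ (your pattern versus its mirror under $u_5\leftrightarrow u_6$). Both subcases do collapse to $q_1=q_2$, $q_3=q_4=2q_1$, $q_5=q_6=3q_1$, $r=4q_1$ and then to $\mathbb{P}^3(q_1)\times\mathbb{S}^{q_1}$ via $d=u_1+u_2$, but each must be checked. You also dropped $u_1u_2$ from your list of five nonzero products; in the paper this identification ($u_1u_2=u_3$ in one subcase, $u_1u_2=u_4$ in the other) is what yields $u_1^2=u_2^2=u_4$ (resp.\ $u_3$) and makes the change of basis produce a genuine square-zero element.
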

\begin{proof}
	Here, we have six cases depending on  which cup product out of $u_1u_2,u_1u_3,u_1u_4,\\u_1u_5,u_2u_3$ and $u_2u_4$ is zero, and rest five cup products are nonzero. Clearly, the only possible case is $u_1u_5=0.$ We have either $u_2u_4=u_6$ or $u_2u_4=u_5.$\\  If $u_2u_4=u_6,$ then we must have $u_1u_3=u_6,u_1u_4=u_2u_3=u_5$ and $u_1u_2=u_3.$ Thus, $u_1^3u_2=u_2^3u_1$ generates $H^r(F).$ Clearly, $u_1^2=u_2^2=u_4.$ So, we get  $u_1^4=u_2^4=0.$ By the change of basis $d=u_1+u_2,$ we get $\{d, u_2\}$ generates $H^{q_1}(F),$ $\{u_2d, u_2^2\}$ generates $H^{q_3}(F),$ $\{u_2^2d, u_2^3\}$ generates $H^{q_5}(F),$ and $u_2^3d$ generates $H^r(F).$ Hence, $F \sim_2 \mathbb{P}^3(q_1) \times \mathbb{S}^{q_1}.$\\ Now, if $u_2u_4=u_5,$ then we must have $u_1u_3=u_5,u_1u_4=u_2u_3=u_6,$ and  $u_1u_2=u_4.$ So, $u_1^3u_2=u_2^3u_1$ generates $H^r(F).$ Simiarly, by the change of basis, we get  $F \sim_2 \mathbb{P}^3(q_1) \times \mathbb{S}^{q_1}.$ Hence, our claim.
 \end{proof}
If all cup products $u_1u_2,u_1u_3,u_1u_4,u_1u_5,u_2u_3$ and $u_2u_4$ are nonzero, then we get
$u_1u_2=u_3,u_1u_3=u_4,u_1u_4=u_2u_3=u_5,u_1u_5=u_2u_4=u_6.$ So, we must have $u_1^2=u_2,$ and hence $u_1^7=0.$ Thus, we have:
\begin{theorem}
	Let $G=\mathbb{Z}_2$ act on $X\sim_2 \mathbb{S}^n \times \mathbb{S}^m \times \mathbb{S}^l,$ where $1\leq n \leq m \leq l$ and $X$ is TNHZ in $X_G.$ If $F$ is connected and  all cup products $u_1u_2,u_1u_3,u_1u_4,u_1u_5,u_2u_3$ and $u_2u_4$  of generators of $H^*(F)$ are nonzero, then $F \sim_2 \mathbb{P}^7(q),~~~q=1,2,4,8.$ 
\end{theorem}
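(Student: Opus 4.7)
The plan is to show that the multiplicative constraints forced by having all six cup products nonzero collapse $H^*(F;\mathbb{Z}_2)$ to a truncated polynomial ring on a single generator, which must then be $\mathbb{P}^7(q)$.

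First, I would pin down which basis element each of the six nonzero cup products equals. The Poincaré dual relations $u_1u_6=u_2u_5=u_3u_4=v$, together with uniqueness of Poincaré duals and the vanishing (by degree) of $u_iu_j$ for pairs $(i,j)$ with $i,j\ge 3$ and $(i,j)\ne(3,4)$, force the assignment
$$u_1u_2=u_3,\quad u_1u_3=u_4,\quad u_1u_4=u_2u_3=u_5,\quad u_1u_5=u_2u_4=u_6.$$
For example, $u_1u_2$ cannot land in $H^r(F)$ or equal $u_6$ without producing a second Poincaré dual for $u_1$; analogous arguments rule out each other alternative placement.

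Next, I would read off the degrees: $u_1u_2=u_3$ gives $q_3=q_1+q_2$; $u_1u_3=u_4$ gives $q_4=2q_1+q_2$; and the coincidence $u_1u_4=u_2u_3$ gives $3q_1+q_2=q_2+q_3$, whence $q_3=3q_1$ and so $q_2=2q_1$. Iterating, $q_i=iq_1$ for $1\le i\le 6$ and $r=7q_1$.

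Then, since $H^{2q_1}(F)=H^{q_2}(F)$ is one-dimensional with basis $u_2$, the element $u_1^2$ is either $0$ or $u_2$. If $u_1^2=0$, then $u_5=u_1u_4=u_1(u_1^2u_2)=0$, contradicting our hypothesis. Hence $u_1^2=u_2$, and inductively $u_k=u_1^k$ for $1\le k\le 7$ with $u_1^7=v$ and $u_1^8=u_1v=0$. Therefore $H^*(F;\mathbb{Z}_2)\cong \mathbb{Z}_2[u_1]/(u_1^8)$, that is, $F\sim_2\mathbb{P}^7(q_1)$. Finally, Proposition~\ref{prop1} gives $q_1\le n$, while Adams' Hopf invariant one theorem \cite{adam} restricts the admissible values of the generator degree in such a truncated polynomial algebra to $q_1\in\{1,2,4,8\}$.

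The main obstacle is the first step: case-checking that each nonzero product has exactly one possible target in $\{u_3,u_4,u_5,u_6,v\}$, using uniqueness of Poincaré duals and the injectivity of $j^*$ to eliminate every alternative. Once the products are placed, the degree recursion and the ring collapse are forced mechanically, and the appeal to Adams supplies the arithmetic restriction on $q$.
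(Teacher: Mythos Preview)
Your proof is correct and follows the same approach as the paper's (very terse) argument: place the six nonzero products as $u_1u_2=u_3$, $u_1u_3=u_4$, $u_1u_4=u_2u_3=u_5$, $u_1u_5=u_2u_4=u_6$, deduce $u_1^2=u_2$, and conclude $H^*(F)\cong\mathbb{Z}_2[u_1]/(u_1^8)$. You supply details the paper omits---the degree recursion showing $q_i=iq_1$ and the explicit exclusion of $u_1^2=0$---so your write-up is in fact more complete; the only quibble is that the mention of ``injectivity of $j^*$'' in your closing paragraph is not actually needed for the product placements, which follow purely from Poincar\'e duality and the vanishing of the higher products.
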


Let $G=\mathbb{Z}_2$ act on $X\sim_2 \mathbb{S}^{n_1}\times\mathbb{S}^{n_2}\times\cdots \mathbb{S}^{n_k},$ where $1\leq n_1 \leq n_2 \leq\cdots \leq n_k$  and $X$ is TNHZ in $X_G.$ Then, rk$H^*(F)=2^k.$ Suppose that $F$ is connected. Let $0<q_1\leq q_2\leq \cdots \leq q_{2^k-2}<r$ be non vanising dimensions of $H^*(F)$ and  $u_i$ is generator of $q_i^{th}$ cohomology group, and $v$ is generator of $H^r(F).$ WLOG, we may assume that $u_1,\cdots ,u_{2^{k-1}-1}$ has poincar\'{e} duals $u_{2^k-2},\cdots ,u_{2^{k-1}},$ respectively. Thus, $v=u_1u_{2^k-2}=\cdots =u_{2^{k-1}-1}u_{2^{k-1}}.$  Clearly, $u_iu_j=0$ if $i+j\geq 2^k.$ It is easy to derive the following results:
 \begin{theorem}
	Let $G=\mathbb{Z}_2$ act on $X\sim_2 \mathbb{S}^{n_1}\times\mathbb{S}^{n_2}\times\cdots \times \mathbb{S}^{n_k},$ where $1\leq n_1 \leq n_2 \leq\cdots \leq n_k$ and $X$ is TNHZ in $X_G.$ If $F$  is connected and all the cup products  of generators of $H^*(F)$ other than generator of $r^{th}$ cohomology group are zero, then $F \sim_2\underset{k}{\#} \mathbb{P}^k(q), q\leq n_1$ and $q=1,2,4,8.$
\end{theorem}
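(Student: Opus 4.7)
The plan is to generalize the analysis of Theorem \ref{3.1} Case (iii) from $k=3$ to arbitrary $k$. First, I would set up the free $H^*(B_G)$-module structure of Proposition \ref{2.2}: pick classes $\alpha_i \in H^{n_i}(X_G)$ representing the generators $a_i \in H^{n_i}(X)$ with $\eta_x^*(\alpha_i)=0$, so that $\{\prod_{i\in S}\alpha_i : S\subseteq\{1,\dots,k\}\}$ is a free basis of $H^*(X_G)$ over $H^*(B_G)$. For each $i$, expand
$$j^*(\alpha_i) \;=\; \sum_{\ell=1}^{2^k-2} A_{i\ell}\, t^{\,n_i-q_\ell}\otimes u_\ell \;+\; A_{i,*}\, t^{\,n_i-r}\otimes v, \qquad A_{i\ell},\,A_{i,*}\in\mathbb{Z}_2.$$
By Proposition \ref{thm 2} together with the TNHZ hypothesis, $j^*$ is injective; in particular, $j^*(\alpha_1\cdots\alpha_k)\in H^r(F_G)$ is nonzero since $\alpha_1\cdots\alpha_k\neq 0$, and its $1\otimes v$ component must therefore be nonzero.

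Second, I would exploit the hypothesis: $u_\ell u_m=0$ for every pair $\ell\neq m$ with $\ell+m<2^k-1$. Picking the $u$-term from each $j^*(\alpha_i)$ (choosing the $v$-term from any single factor already annihilates the $1\otimes v$ coefficient by degree), the contribution to $1\otimes v$ is a sum of monomials $A_{1\ell_1}\cdots A_{k\ell_k}\cdot u_{\ell_1}\cdots u_{\ell_k}$, and each monomial survives only if the product $u_{\ell_1}\cdots u_{\ell_k}$ equals $v$. Since any cross product of two distinct, non-Poincar\'e-dual generators is zero, and since any Poincar\'e pairing producing $v$ is annihilated by any further $u_\ell$ factor, the only possibility is $\ell_1=\cdots=\ell_k=\ell$ with $u_\ell^k=v$. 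In particular, some index $\ell$ satisfies $A_{i\ell}\neq 0$ for all $i$, and $u_\ell^k=v$, which forces $kq_\ell=r$.

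Third, I would extract the full ring structure. Proposition \ref{puppe} bounds the number of algebra generators of $H^*(F;\mathbb{Z}_2)$ by $k$. Repeating the argument above with the basis elements $\alpha_S=\prod_{i\in S}\alpha_i$ of the $H^*(B_G)$-module $H^*(X_G)$ yields, for every $\ell$ with $A_{i\ell}\neq 0$ for all $i$, the chain $u_\ell,u_\ell^2,\dots,u_\ell^k=v$ inside $H^*(F)$; by Poincar\'e duality and the vanishing hypothesis, any two such algebra generators must lie in a common degree $q$ satisfying $kq=r$ and have trivial cross products. Hence
$$H^*(F;\mathbb{Z}_2) \;\cong\; \mathbb{Z}_2[x_1,\dots,x_k]\big/\bigl(x_ix_j \text{ for } i\neq j,\ x_i^{k+1},\ x_i^k-x_j^k\bigr),\qquad \deg x_i=q,$$
which is the mod~$2$ cohomology of $\underset{k}{\#}\mathbb{P}^k(q)$. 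Proposition \ref{prop1} then forces $q\leq n_1$, and Adams's theorem on the Hopf invariant pins down $q\in\{1,2,4,8\}$.

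The main obstacle is the combinatorial step in the second paragraph: one must verify cleanly that across all $2^k$ basis elements of $H^*(X_G)$ no mixed monomial survives into the $1\otimes v$ slot of the restriction. The hand calculations for $k=3$ in Theorems \ref{3.1}--\ref{3.6} suggest that this reduces to systematic index tracking — in particular, to showing that $A_{i\ell}\neq 0$ forces a single degree $q_\ell=q$ common to all algebra generators — but carrying this out uniformly in $k$, while keeping the bookkeeping transparent, is the delicate point.
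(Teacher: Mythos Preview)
The paper supplies no proof here: after recording the Poincar\'e-duality pairing $u_iu_{2^k-1-i}=v$ for general $k$, it simply asserts ``It is easy to derive the following results.'' Your outline is exactly the intended generalization of the Theorem~\ref{3.1} argument --- expand $j^*(\alpha_1\cdots\alpha_k)$, use associativity together with the vanishing hypothesis to kill every mixed monomial (since $u_i(u_iu_j)=u_i\cdot 0$ or $u_i\cdot v=0$ whenever $i\neq j$), and then invoke Proposition~\ref{puppe} --- so at the level of method you and the paper agree.

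There is, however, a real gap in your third paragraph, and it exposes a defect in the theorem as stated. The ring you write down, namely $H^*\bigl(\underset{k}{\#}\mathbb{P}^k(q);\mathbb{Z}_2\bigr)$, has total rank $k(k+1)-2(k-1)=k^2-k+2$, whereas the TNHZ assumption forces $\operatorname{rk}H^*(F)=2^k$; these coincide only for $k\in\{2,3\}$. Your passage from ``some $u_\ell$ has $u_\ell^k=v$'' to the full presentation skips precisely the case analysis that in Theorem~\ref{3.1} occupies Cases~(i)--(iii): one must verify that \emph{every} algebra generator has height exactly $k$, that there are exactly $k$ of them, and that their powers account for all $2^k-2$ classes $u_i$. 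That count succeeds only when $k^2-k+2=2^k$. Indeed, the hypothesis forces each algebra generator $x_m$ to have height $h_m\le 3$ (if $h_m\ge 4$ then $x_m\cdot x_m^2=x_m^3$ is a nonzero product of distinct basis elements not equal to $v$), so with $s\le k$ generators one gets $\operatorname{rk}H^*(F)\le 2+2s\le 2k+2<2^k$ for $k\ge 4$. Thus for $k\ge 4$ the hypothesis is vacuous, and the argument needed is not the one you sketch but rather this rank comparison showing that no such $F$ exists.
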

	\begin{theorem}
		Let $G=\mathbb{Z}_2$ act on $X\sim_2 \mathbb{S}^{n_1}\times\mathbb{S}^{n_2}\times\cdots\times \mathbb{S}^{n_k},$ where $1\leq n_1 \leq n_2 \leq\cdots \leq n_k$ and $X$ is TNHZ in $X_G.$ If $F$  is connected and all the cup products $u_iu_j,$ where $i+j\leq  2^k-2,i\neq j,$ of generators of $H^*(F)$ other than $r^{th}$ cohomology group are nonzero, then $F \sim_2 \mathbb{P}^{2^{k}-1}(q),~~~q=1,2,4,8.$ 		
	\end{theorem}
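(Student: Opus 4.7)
The plan is to extend the argument from the $k=3$ case (Theorem \ref{3.6} and the unnumbered statement immediately before this theorem) to arbitrary $k$. Specifically, I will show that $H^*(F) \cong \mathbb{Z}_2[x]/(x^{2^k})$ with $x = u_1$, so that $F \sim_2 \mathbb{P}^{2^k-1}(q_1)$; Adams' theorem \cite{adam} then forces $q_1 \in \{1,2,4,8\}$.

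First, from the Poincar\'{e}-duality analysis established just before Theorem \ref{3.1} (applied in the present setting), the generators pair off via $u_i \leftrightarrow u_{2^k-1-i}$ with $u_i u_{2^k-1-i} = v$ for $1 \leq i \leq 2^{k-1}-1$, and in particular $u_1 u_{2^k-2} = v$. All other products between non-dual pairs with $i+j \geq 2^k$ vanish by degree, while the hypothesis supplies $u_i u_j \neq 0$ for every off-diagonal pair with $i+j \leq 2^k-2$.

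Second, and this is the crux, I argue by induction on $j$ that $q_j = j q_1$ and $u_j = u_1^j$ (up to a choice within each graded piece), for $1 \leq j \leq 2^k-2$. For the inductive step, the hypothesis gives $u_1 u_{j-1} \neq 0$ in $H^{q_1 + q_{j-1}}(F)$; pairing with $u_{2^k-1-j}$ and using associativity together with the inductive identification $u_{j-1} = u_1^{j-1}$, the product $(u_1 u_{j-1}) \cdot u_{2^k-1-j}$ telescopes to $u_1^{j-1}(u_1 u_{2^k-1-j}) = \cdots = u_1 \cdot u_{2^k-2} = v$, forcing $u_1 u_{j-1}$ to be the Poincar\'{e} dual of $u_{2^k-1-j}$, namely $u_j$; the degree equality then gives $q_j = j q_1$.

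Third, with $u_j = u_1^j$ for $1 \leq j \leq 2^k-2$ and $u_1^{2^k-1} = u_1 \cdot u_{2^k-2} = v$, all $2^k$ powers $1, u_1, u_1^2, \ldots, u_1^{2^k-1}$ are nonzero and of distinct degrees, so they form a basis of $H^*(F)$ and $u_1^{2^k} = 0$. Therefore $H^*(F) \cong \mathbb{Z}_2[u_1]/(u_1^{2^k}) \cong H^*(\mathbb{P}^{2^k-1}(q_1);\mathbb{Z}_2)$, and Adams' theorem on mod-$2$ truncated polynomial algebras of length at least four \cite{adam} yields $q_1 \in \{1,2,4,8\}$, completing the proof.

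The main obstacle I expect is the degeneracy in Step 2 when two distinct generators share a degree: the identification $u_1 u_{j-1} = u_j$ is then defined only modulo the two-dimensional ambiguity of the corresponding graded piece. As in Subcases (iii)--(iv) of Theorem \ref{3.3} and Case (vi) of Theorem \ref{3.5}, such ambiguities must be resolved by an explicit change of basis of the form $d = u_j + u_{j'}$, after which the algebra is seen to reduce to the truncated polynomial ring on the single generator $u_1$ as claimed.
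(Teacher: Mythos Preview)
The paper gives no proof of this theorem; it merely states, after setting up the Poincar\'e pairing $u_i\leftrightarrow u_{2^k-1-i}$, that ``it is easy to derive the following results,'' relying on the reader to extrapolate from the explicit $k=3$ computation preceding Theorem~3.7. Your overall plan---show that $H^*(F)$ is the truncated polynomial ring on $u_1$---is the intended one.

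There is, however, a genuine gap in your Step~2. The ``telescoping'' is circular. To show that $u_1u_{j-1}$ is Poincar\'e dual to $u_{2^k-1-j}$ you compute
\[
(u_1u_{j-1})\cdot u_{2^k-1-j}=u_1^{\,j-1}\bigl(u_1u_{2^k-1-j}\bigr),
\]
and then want this to collapse to $u_1u_{2^k-2}=v$. But that last step requires $u_1u_m=u_{m+1}$ for $m=2^k-1-j,\,2^k-j,\ldots,2^k-3$, i.e.\ the very identity you are proving, at indices \emph{larger} than the current $j$. Your upward induction never supplies these. A second, related gap: even if $(u_1u_{j-1})u_{2^k-1-j}=v$, this only says $u_1u_{j-1}$ pairs nontrivially with $u_{2^k-1-j}$; when $\dim H^{q_j}(F)>1$ it does not force $u_1u_{j-1}=u_j$, so the degree--coincidence issue you flag is not peripheral but already bites here.

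What actually works (and is implicit in the paper's $k=3$ argument) is that one must use the \emph{full} hypothesis, not just the products $u_1u_j$. For $k=3$ the paper first pins down $u_1u_j=u_{j+1}$ and $u_2u_3=u_5$ by degree/uniqueness-of-dual arguments; the relation $u_2u_3=u_5$ then forces $q_2=2q_1$, and $0\neq u_1u_3=u_1(u_1u_2)=u_1^2u_2$ forces $u_1^2\neq0$, hence $u_1^2=u_2$. For general $k$ the analogous route is: first establish $u_1u_j=u_{j+1}$ for $j\geq2$ and $u_2u_j=u_{j+2}$ by descending degree arguments together with uniqueness of Poincar\'e duals; the degree identities these produce give $q_2=2q_1$, and then $0\neq u_1u_3=u_1^2u_2$ yields $u_1^2=u_2$, after which $u_j=u_1^{\,j}$ follows immediately. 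Alternatively, one can show in one stroke that the annihilator of $u_1$ is exactly $\langle v\rangle$ (again this needs the products $u_iu_j$ with $i\geq2$ to rule out $u_1^2=0$), whence multiplication by $u_1$ is injective on $H^{<r}(F)$ and $u_1^{2^k-1}\neq0$.
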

Next, suppose that $X$ is not TNHZ in $X_G$ and $F$ is nonempty connected fixed point set of involutions on $X.$ So, we have   $0<$ rk $H^*(F)< 8.$ By the Floyd's formula \cite{bredon}, we get rk $H^*(F)=2,4$ or $6.$ If $\pi_1(B_G)$ acts nontrivially on $H^*(X),$ then we have proved the following theorem:
\begin{theorem}\label{3.10}
	Let $G=\mathbb{Z}_2$ act on  $X\sim_2 \mathbb{S}^{n}\times \mathbb{S}^{m}\times  \mathbb{S}^{l}, 1\leq n\leq m \leq l$ and    $X$ is not TNHZ in $X_G.$ If $F$ is nonempty connected fixed point set and  $\pi_1(B_G)$ acts nontrivially on $H^*(X),$ then $F$ must be the one of the following:
	\begin{enumerate}
		\item $F \sim_2 \mathbb{S}^{r_1},0<r_1\leq n+m+l.$
		\item $F \sim_2 \mathbb{S}^{r_1}\times  \mathbb{S}^{r_2},0<r_1\leq min\{2n,l\} ~or~n, 0<q_2\leq~max\{2n,l\}~or ~2m.$
		\item $F \sim_2 \mathbb{P}^3(r_1),r_1\leq ~min\{2n,l\}~or ~n,r_1=1,2,4,8.$ 
		\item $F \sim_2 \mathbb{P}^2(r_1)\# \mathbb{P}^2(r_1),r_1\leq ~min\{2n,l\}~or ~n,r_1=1,2,4,8.$
		\item $F \sim_2 \mathbb{P}^2(r_1)\vee \mathbb{S}^{r_2}, r_1\leq ~min\{2n,l\}~or ~n,0<r_2 \leq 2n+l ~or~n+2m,  r_1=1,2,4,8.$
	\end{enumerate}
\end{theorem}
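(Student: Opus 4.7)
The strategy is to enumerate the possible nontrivial degree-preserving algebra involutions $g^*$ on $H^*(X)=\mathbb{Z}_2[a,b,c]/\langle a^2,b^2,c^2\rangle$, compute the $E_2$-page of the Leray--Serre spectral sequence of the Borel fibration via Proposition \ref{nontrivial}, run the differentials, and read off $H^*(F)$ from what survives.

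First I would classify the candidate involutions. Since $g^*$ preserves degree and the algebra structure, nontriviality forces a coincidence among the degrees of the monomial basis $\{1,a,b,c,ab,ac,bc,abc\}$. The subcases split according to which coincidences occur: $n=m<l$, $n<m=l$, $n=m=l$, and the mixed-degree coincidences $n+m=l$, $n+l=m+l$, etc. In each subcase, the candidate involutions are (up to change of basis) the nontrivial order-$2$ elements of the pertinent $GL_k(\mathbb{Z}_2)$, which for $k=2,3$ are swaps and shears of generators, e.g.\ $a\leftrightarrow b$ or $c\mapsto c+ab$ when $l=n+m$. For each such $g^*$ I record $\ker(1+g^*)$ and $\mathrm{im}(1+g^*)$ on every $H^i(X)$, which determines the $E_2$-page: $E_2^{0,i}=\ker\tau$ and $E_2^{k,i}=\ker\tau/\mathrm{im}\,\sigma$ for $k>0$.

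Next I would identify the differentials using Proposition \ref{prop 6}: every element of the form $a\,g^*(a)$ is a permanent cocycle, so those classes survive to $E_\infty$. By Floyd's formula, the total rank $\sum\mathrm{rk}\,H^i(F)$ is even, and since $X$ is not TNHZ but $F$ is nonempty, it lies in $\{2,4,6\}$. In each subcase I would write down the candidate $E_\infty$ (subject to rank $\leq \sum\mathrm{rk}\,H^i(X)=8$), use the multiplicative compatibility of $E_\infty$ as a subquotient of $H^*(X_G)$ to force the remaining differentials, and then reconstruct the cup product structure of $H^*(F)$ through the restriction $j^{*}:H^{*}(X_{G})\to H^{*}(F_{G})$, which is a monomorphism in high degrees by Proposition \ref{thm 2}. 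The dimension bounds in the conclusions (such as $r_1\leq \min\{2n,l\}$ versus $r_1\leq n$) come from applying Proposition \ref{prop1} together with the exact position of the surviving classes on the $E_\infty$-page: the bound reflects whether the surviving generator originated from $H^n(X)$ (giving $r_1\leq n$) or from a product class like $ab$ (giving $r_1\leq \min\{2n,l\}$).

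The chief obstacle is the sheer case analysis: each coincidence pattern combined with each conjugacy class of order-$2$ element of $GL_k(\mathbb{Z}_2)$ produces a distinct $E_2$-page, and one must verify in each that the surviving $E_\infty$ reassembles into one of the five listed ring types rather than something new. A secondary subtlety, implicit in the statement, is ruling out $\sum\mathrm{rk}\,H^i(F)=6$; I expect this to follow subcase by subcase from the fact that a nontrivial $\pi_1(B_G)$-action together with the permanent cocycles $a\,g^*(a)$ always pair up surviving classes, so the rank drops from $8$ in multiples of $2$ but never lands at $6$ without simultaneously forcing further cancellations. Once these checks are complete, matching the resulting rings against the list gives the five possibilities (1)--(5).
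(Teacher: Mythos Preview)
Your overall strategy---compute $E_2$ via Proposition \ref{nontrivial}, pin down permanent cocycles via Proposition \ref{prop 6}, then recover $H^*(F)$ through the restriction $j^*$ using Proposition \ref{thm 2}---is exactly the paper's approach. The paper, however, treats only the generator-degree coincidences $n=m<l$, $n<m=l$, $n=m=l$, asserting that a nontrivial $\pi_1(B_G)$-action forces one of these; it never considers the mixed-degree shears you list (such as $l=n+m$ with $g^*(c)=c+ab$), so on that point your enumeration is actually more careful than the paper's.

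The genuine gap in your proposal is precisely in those extra cases: the expectation that $\mathrm{rk}\,H^*(F)=6$ ``never lands'' is asserted, not argued, and it does not obviously hold. Take $n<m<l=n+m$ with $g^*(a)=a$, $g^*(b)=b$, $g^*(c)=c+ab$. Then $\ker\tau/\mathrm{im}\,\sigma$ vanishes only in row $l$, while rows $0,n,m,n+l,m+l,n+m+l$ remain full for all $k>0$. With $F\neq\emptyset$ the bottom row survives, and if no further differential is forced, $E_2=E_\infty$ gives $\mathrm{rk}\,H^*(F)=6$ by Proposition \ref{thm 2}, which matches none of the five listed types. Your heuristic that the permanent cocycles $a\,g^*(a)$ pair up surviving classes does nothing here: $a\,g^*(a)=a^2=0$, $b\,g^*(b)=0$, and $c\,g^*(c)=abc$ only secures the top row. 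To close this you must either exhibit a forced differential in this subcase, argue that such an induced action cannot arise from a genuine involution on $X$, or concede that the list is incomplete for mixed-degree coincidences; the paper does not resolve this either, since it silently excludes the case.
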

\begin{proof}
	As  $\pi_1(B_G)=G$ acts nontrivially on $H^*(X),$ we must have one of the following conditions: $(i)~ n=m<l$ $(ii) ~n<m=l$ $(iii)~ n=m=l.$ \\ If $n=m<l,$ then $G$ acts nontrivially on $H^n(X).$ Let  $g$ be  generator of $G.$ We have exactly three possibilities of  nontrivial actions: (1) $g^*(a)=b~\&$ $g^*(b)=a,$ (2) $g^*(a)=a~\&$ $g^*(b)=a+b$ and (3) $g^*(a)=a+b~\&$ $g^*(b)=b.$ First, we consider nontrivial action defined by $g^*(a)=b~\&~g^*(b)=a.$  By the natuality of cup product, we get $G$ also acts nontrivially on $H^{n+l}(X).$ Note that $\sigma=\tau=1+g^*.$ By Proposition \ref{nontrivial}, we get $E_2^{0,i}\cong \mathbb{Z}_2$ and  $E_2^{k,i}= 0~~~\forall~~k>0, ~~ i=n,n+l.$ For  $i\neq n,n+l$, $\pi_1(B_G)$ acts trivially on $H^i(X),$ and hence, $E_2^{k,i} \cong H^k(B_G) \otimes H^i(X)\cong \mathbb{Z}_2 ~~~\forall~~k\geq 0, ~~ i=0,l,2n,2n+l.$   By Proposition \ref{prop 6}, $ 1 \otimes ab$ is a permanent cocycle.
	
	 First, suppose that $E_2^{*,*}\neq E_{\infty}^{*,*}.$ As $F\neq \emptyset,$ we must have $d_{l+1}(1\otimes c)=0.$ If  $d_{l-2n+1}$ is nontrivial, then $d_{2n+l+1}$ must be trivial. Thus rk $H^k(X_G)=2$ for large value of $k,$  and by Proposition \ref{thm 2}, we get  rk $H^*(F)=2.$ So, $F\sim_2 \mathbb{S}^q,$ and by Proposition \ref{prop1}, we get $0<q\leq n+m+l.$ Similarly, if  $d_{l-2n+1}$ is trivial, then  $d_{2n+l+1}$ must be  nontrivial. Clearly, $F\sim_2 \mathbb{S}^q, 0<q\leq n+m+l.$ This realizes possibility (1).
	 
	 Next, supppose that  $E_2^{*,*}= E_{\infty}^{*,*}.$ Then rk $H^*(F)=4.$ Assume that $x\in F,$ and    $\alpha\in H^*(X_G,x_G)$ and $\beta \in H^*(X_G,x_G)$ represent $1 \otimes ab \in E_2^{0,2n}$ and    $1 \otimes c \in E_2^{0,l},$ respectively. Clearly, $\{t^i\alpha, t^i\beta , t^i\alpha \beta\}$ forms a basis for $H^k(X_G,x_G)$ over $\mathbb{Z}_2$-module. Let $0<q_1\leq q_2 \leq q_3$ be the non vanishing dimensions of $H^*(F)$ and $u_i$ generates ${q_i}^{th}$ cohomology group of $F,1\leq i \leq 3.$ Then we write 
	 $$j^*(\alpha)=A_1 t^{2n-q_1}\otimes u_1+A_2 t^{2n-q_2}\otimes u_2+A_3 t^{2n-q_3}\otimes u_3,$$
	 
	 $$ j^*(\beta)=B_1 t^{l-q_1}\otimes u_1+B_2 t^{l-q_2}\otimes u_2+B_3 t^{l-q_3}\otimes u_3,$$ So, we get $$j^*(\alpha\beta)=A_1B_1 t^{2n-2q_1}\otimes u_1^2+(A_1B_2+A_2B_1) t^{2n+l-q_1-q_2}\otimes u_1u_2+A_2B_2 t^{2n+l-2q_2}\otimes u_2^2,$$
	  where $A_i ~\&~B_i$ are in $ \mathbb{Z}_2.$ Clearly, $u_1u_3=u_2u_3=u_3^2=0.$ As $\alpha\beta \neq 0,$   $j^*(\alpha\beta)$ is one of the following: (1) $t^{2n+l-2q_1}\otimes u_1^2,$ (2) $t^{2n+l-2q_2}\otimes u_2^2,$ (3) $t^{2n+l-q_1-q_2}\otimes u_1u_2,$ (4) $t^{2n+l-2q_1}\otimes u_1^2 +t^{2n+l-q_1-q_2}\otimes u_1u_2,$ (5) $t^{2n+l-2q_2}\otimes u_2^2 +t^{2n+l-q_1-q_2}\otimes u_1u_2,$ and (6) $t^{2n+l-2q_1}\otimes u_1^2+t^{2n+l-2q_2}\otimes u_2^2.$ \\ It is easy to observe that the cases (5) and (6) are not possible. If $j^*(\alpha\beta)=t^{2n+l-2q_1}\otimes u_1^2+ t^{2n+l-2q_2}\otimes u_2^2,$ then either rk $H^*(F)>4$ or $j^*(\alpha\beta)=0,$ a contradiction. And, if $j^*(\alpha\beta)=t^{2n+l-2q_2}\otimes u_2^2+ t^{2n+l-q_1-q_2}\otimes u_1u_2,$ then $j^*(\alpha \beta)=0,$ again a contradiction.\\ Now, suppose that $j^*(\alpha\beta)=t^{2n+l-2q_1}\otimes u_1^2+ t^{2n+l-q_1-q_2}\otimes u_1u_2.$ Then, if $u_1u_2=u_1^2,$ then $j^*(\alpha \beta )=0,$ a contradiction. So, $u_2=u_1^2,$ and hence $F\sim_2 \mathbb{P}^3(q_1),q_1<2n$ or $q_1< l,$ according as $2n <l$ or $l<2n,$ respectively.  Note that, if $q_1=2n$ or $q_1=l,$ then $u_1=a|F,$ and  we get $u_1^2=0,$ a contradiction.   This realizes possibility (3).\\ Next, suppose that $j^*(\alpha\beta)= t^{2n+l-q_1-q_2}\otimes u_1u_2.$ In this case, we have four conditions  $(i)~u_1^2=0~\&~u_2^2 =0,~(ii)~u_1^2\neq 0~\&~u_2^2 =0, ~(iii)~u_1^2= 0~\&~u_2^2 \neq 0, $ and $~(iv)~u_1^2\neq 0~\&~u_2^2 \neq0. $ Clearly, if $u_1^2=0~\&~u_2^2 =0,$ then $F \sim_2 \mathbb{S}^{q_1}\times  \mathbb{S}^{q_2}.$ Now, if $u_1^2\neq 0~\&~u_2^2 =0,$ then for $u_2=u_1^2,$ we have $F \sim_2 \mathbb{P}^3(q),$ and for $u_1u_2=u_1^2,$ we get $q_1=q_2$ and by the change of basis $d=u_1+u_2,$ we have $F \sim_2 \mathbb{P}^2(q)\# \mathbb{P}^2(q).$ Next, if $u_1^2= 0~\&~u_2^2 \neq 0,$ then $u_1u_2=u_2^2,$ and we get $F \sim_2 \mathbb{P}^2(q)\# \mathbb{P}^2(q).$ Finally, if $u_1^2\neq 0~\&~u_2^2 \neq 0,$ then we must have $u_1u_2=u_2^2=u_1^2,$ and  we get   $F \sim_2 \mathbb{P}^2(q)\vee \mathbb{S}^q.$ \\
	  Now, suppose that $j^*(\alpha\beta)=t^{2n+l-2q_1}\otimes u_1^2,$ then as  above we have four conditions:  $(i)~u_2^2=0~\&~u_1u_2 =0,~(ii)~u_2^2\neq 0~\&~u_1u_2 =0, ~(iii)~u_2^2= 0~\&~u_1u_2 \neq 0, $ and $~(iv)~u_2^2\neq 0~\&~u_1u_2 \neq0.$ If $u_2^2=0~\&~u_1u_2 =0,$ then $F \sim_2 \mathbb{P}^2(q_1)\vee \mathbb{S}^{q_2}$ or $F \sim_2 \mathbb{P}^2(q_1)\vee \mathbb{S}^{q_3},$ accordingly $u_1^2=u_3$ or  $u_1^2=u_2,$ respectively. If $u_2^2\neq 0~\&~u_1u_2 =0,$ then we must have $u_1^2=u_2^2,$ and hence $F \sim_2 \mathbb{P}^2(q)\# \mathbb{P}^2(q).$ If $u_2^2= 0~\&~u_1u_2 \neq 0,$ then for $u_2=u_1^2$ we get $F \sim_2 \mathbb{P}^3(q),$ and for $u_1u_2=u_1^2,$ we get $q_1=q_2$ and by the change of basis $d=u_1+u_2,$ we have $F \sim_2 \mathbb{P}^2(q)\# \mathbb{P}^2(q).$  If $u_2^2\neq 0~\&~u_1u_2 \neq0,$ then we must have $u_1u_2=u_2^2=u_1^2,$ and  we get   $F \sim_2 \mathbb{P}^2(q)\vee \mathbb{S}^q.$ Clearly, all the inequalities hold by Propostion \ref{prop1}.\\
	  Finally, suppose that  $j^*(\alpha\beta)=t^{2n+l-2q_1}\otimes u_1^2.$ This case is similar to the case when $j^*(\alpha\beta)=t^{2n+l-2q_1}\otimes u_1^2.$ \\
  For  the other two nontrivial actions of $G$ on $H^n(X),$ we get  the same  result.\\
Next, if $n<m=l$ or $n=m=l,$ then we get the same  possibilities as in the case when $n=m<l.$	\end{proof}	
\begin{remark}
If $\pi_1(B_G)$ acts trivially on $H^*(X)$ and rk $H^*(F)=2$ or $4,$ then we get same possibilities of nonempty connected fixed point sets as in Theorem  \ref{3.10}. Note that for the smooth manifolds in Theorem \ref{3.10}, we get  possibilities (1)-(4) only.
	
\end{remark}
 
\subsection{Possibilities of disconnected fixed point sets}

Let $G=\mathbb{Z}_2$ act on  $X\sim_2 \mathbb{S}^{n}\times \mathbb{S}^{m}\times  \mathbb{S}^{l}, 1\leq n\leq m \leq l.$  In this section, we determine the possibilities of a disconnected fixed point set $F$ when $X$ is TNHZ in $X_G.$ We have $\sum$ rk $H^i(F) =8.$ Clearly, $F$ has at most eight components. By Proposition \ref{thm 1}, each component $F_0$ of $F$ is a poincar\'{e} duality space of formal dimension, say $r\leq n+m+l.$ If rk $H^*(F_0)=1$ or $2$ or $3,$ then  $F_0$ have the mod $2$ cohomology of a point or a sphere or a projective space of height two, respectively. Now, if rk $H^*(F_0)=4,$ then  $F_0$ have the mod $2$ cohomology of either a product of spheres $\mathbb{S}^{q_1} \times  \mathbb{S}^{q_2},$ or a projective space of height three $ \mathbb{P}^3(q)$ or a connected sum of two projective spaces of height two $ \mathbb{P}^2(q) \# \mathbb{P}^2(q)$ \cite{Bredon}. For any component $F_0$ of $F$ with rk $H^*(F_0)\leq 4,$   we have following theorem:

\begin{theorem}\label{3.12}
	Let $G=\mathbb{Z}_2$ act on $X\sim_2 \mathbb{S}^n \times \mathbb{S}^m \times \mathbb{S}^l,$ where $1\leq n \leq m \leq l$ and $X$ is TNHZ in $X_G.$ If $F$ is disconnected  and the rank of cohomology ring of a component of $F$ is at most four, then $F$ must be one of the following:
	\begin{enumerate}
\item $F \sim_2 \underset{i=0}{\overset{3}{\sqcup}}F_i,$ where $F_i \sim_2\mathbb{S}^{r_i},0\leq i \leq 3.$

\item $F \sim_2 \underset{i=0}{\overset{2}{\sqcup}}F_i,$ where $F_0 \sim_2 \mathbb{P}^2(r_1) $ or $\mathbb{S}^{r_2}+pt,$ $F_1 \sim_2 \mathbb{P}^2(r_3)$ and $F_2\sim_2\mathbb{S}^{r_4}.$
\item $F \sim_2 \underset{i=0}{\overset{2}{\sqcup}} F_i,$ where $F_0 \sim_2 \mathbb{S}^{r_1} \times  \mathbb{S}^{r_2}$ or $\mathbb{P}^3(r_1) $ or $\mathbb{P}^2(r_1) \# \mathbb{P}^2(r_2)$ and  $F_i \sim_2 \mathbb{S}^{r_i},i=1,2.$
\item $F \sim_2 \underset{i=0}{\overset{2}{\sqcup}} F_i,$ where $F_0 \sim_2 \mathbb{S}^{r_1} \times  \mathbb{S}^{r_2}$ or $\mathbb{P}^3(r_1) $ or $\mathbb{P}^2(r_1) \# \mathbb{P}^2(r_1),$  $F_1 \sim_2 \mathbb{P}^2(r_2)$  and $F_2 \sim_2 pt.$
\item $F \sim_2 \underset{i=0}{\overset{1}{\sqcup}} F_i,$ where $F_i \sim_2 \mathbb{S}^{r_1} \times  \mathbb{S}^{r_2}$ or $\mathbb{P}^3(r_1) $ or $\mathbb{P}^2(r_1) \# \mathbb{P}^2(r_1),i=0,1.$
	\end{enumerate} 
\end{theorem}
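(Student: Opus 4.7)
The plan is to enumerate the partitions of $8 = \sum \text{rk}\, H^i(F)$ into parts of size at most $4$, corresponding to the ranks of the connected components of $F$, realize each admissible partition by the same Leray--Serre spectral sequence analysis used in Theorems \ref{3.1}--\ref{3.6}, and eliminate the partitions not appearing in the statement.

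The first step is to classify each component. By Proposition \ref{thm 1} every component $F_0$ is a Poincar\'e duality space over $\mathbb{Z}_2$, and by hypothesis its total $\mathbb{Z}_2$-rank is at most $4$. A Poincar\'e duality space of rank $1, 2, 3, 4$ is respectively a point, a sphere $\mathbb{S}^r$, a projective plane $\mathbb{P}^2(q)$ with $q\in\{1,2,4,8\}$, or one of $\mathbb{S}^{r_1}\times \mathbb{S}^{r_2}$, $\mathbb{P}^3(q)$, $\mathbb{P}^2(q_1)\#\mathbb{P}^2(q_2)$ (cf.\ the connected analysis above). Combined with $\sum \text{rk}\, H^i(F) = 8$, this restricts matters to the finite list of partitions of $8$ with parts $\leq 4$.

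For each candidate partition I would fix a basepoint $x_i$ in each component $F_i$, use Proposition \ref{2.2} to describe $H^*(X_G)$ as the free $H^*(B_G)$-module on a basis lifting $\{1,a,b,c,ab,ac,bc,abc\}$, and expand
\[
j^*(\alpha) = \sum_{i,k} A_{i,k}\, t^{n-q_k^{(i)}}\otimes u_k^{(i)},
\]
together with analogous expressions for $j^*(\beta)$ and $j^*(\gamma)$, where $\{u_k^{(i)}\}$ is a basis of $H^*(F_i)$. The relations $\alpha^2 = \beta^2 = \gamma^2 = 0$, the nonvanishing of $\alpha\beta\gamma$, and injectivity of $j^*$ in high degrees (Proposition \ref{thm 2}) then determine the algebra structure of each $F_i$; the dimension bounds and the exceptional values $q\in\{1,2,4,8\}$ follow from Proposition \ref{prop1} applied degree by degree and from Adams' theorem, exactly as in the connected theorems.

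The main obstacle is to rule out uniformly the partitions that do not appear in the conclusion, most notably those containing two or more isolated points alongside higher-rank pieces, such as $3+3+1+1$, $4+2+1+1$, $3+2+2+1$, and $2+2+2+1+1$. In each such case the excess of rank-$1$ components forces summands of the form $A_{i,0}\, t^{n}\otimes 1$ in the expansions of $j^*(\alpha), j^*(\beta), j^*(\gamma)$ that cannot be paired with any higher-degree generator on that component, so multiplying out $j^*(\alpha\beta\gamma)$ either vanishes on the offending component or produces a term outside the $H^*(B_G)$-basis described in Proposition \ref{2.2}, contradicting injectivity of $j^*$. Formulating this obstruction once and for all, rather than reiterating the elaborate case analysis of Theorems \ref{3.1}--\ref{3.6} inside every surviving partition, is the delicate technical point of the argument.
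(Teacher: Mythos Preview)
Your proposal contains a genuine misreading of the theorem that sends you after a phantom obstacle. You claim that the ``main obstacle'' is to rule out partitions such as $3+3+1+1$, $4+2+1+1$, $3+2+2+1$, and $2+2+2+1+1$; but all of these \emph{are} included in the stated list once one allows $r_i=0$, so that $\mathbb{S}^{r_i}=\mathbb{S}^0$ consists of two points. For instance, $3+3+1+1$ is case~(2) with $r_4=0$; $4+2+1+1$ is case~(3) with one of $r_1,r_2$ equal to $0$; $3+2+2+1$ is case~(2) with $F_0\sim_2\mathbb{S}^{r_2}+pt$; and $2+2+2+1+1$ is case~(1) with one $r_i=0$. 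The obstruction you sketch---that an isolated fixed point cannot carry a nontrivial piece of $j^*(\alpha\beta\gamma)$---is simply false: the diagonal complex-conjugation action on $\mathbb{S}^1\times\mathbb{S}^1\times\mathbb{S}^1$ is TNHZ with fixed set eight isolated points, which your argument would exclude.

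Once this misreading is corrected, the theorem requires no spectral-sequence computation at all, and the paper accordingly states it without a formal proof. The entire content is in the paragraph preceding the statement: $\sum\operatorname{rk}H^i(F)=8$; each component $F_0$ is a Poincar\'e duality space by Proposition~\ref{thm 1}; a Poincar\'e duality space of total rank $1,2,3$ is (cohomologically) a point, a sphere, or $\mathbb{P}^2(q)$; and for rank $4$ the paper cites Bredon for the list $\mathbb{S}^{q_1}\times\mathbb{S}^{q_2}$, $\mathbb{P}^3(q)$, $\mathbb{P}^2(q)\#\mathbb{P}^2(q)$, the restriction to at most three generators coming from Proposition~\ref{puppe}. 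Enumerating the ways these pieces can add up to rank $8$ then gives exactly the five items listed. Your first two paragraphs already contain this argument; the rest of the proposal---the expansion of $j^*(\alpha),j^*(\beta),j^*(\gamma)$ and the elimination step---is unnecessary here and should be dropped.
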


\noindent Further, if rk $H^*(F_0)>4,$ where $F_0$ is a component of $F,$ then we have following theorems:
\begin{theorem}\label{3.13}
	Let $G=\mathbb{Z}_2$ act on $X\sim_2 \mathbb{S}^n \times \mathbb{S}^m \times \mathbb{S}^l,$ where $1\leq n \leq m \leq l$ and $X$ is TNHZ in $X_G.$ If $F$ is disconnected  and  the rank of cohomology ring of a component of $F$ is five, then $F$ must be one of the following:
	\begin{enumerate} \item $F\sim_2\underset{i=0}{\overset{1}{\sqcup}} F_i,$ where $F_0 \sim_2 \underset{3}{\#} \mathbb{P}^2(r_1)$ or $\mathbb{P}^2(r_1) \# ( {\mathbb{S}^{r_2} \times \mathbb{S}^{r_3} })$ or $\mathbb{P}^4(r_1)$ and $F_1 \sim_2 \mathbb{P}^2(r_1).$
	\item	$F\sim_2\underset{i=0}{\overset{2}{\sqcup}} F_i,$ where $F_0\sim_2 \underset{3}{\#} \mathbb{P}^2(r_1)$ or $\mathbb{P}^2(r_1) \# ( {\mathbb{S}^{r_2} \times \mathbb{S}^{r_3} })$ or $\mathbb{P}^4(r_1)$ and $F_1 \sim_2 \mathbb{S}^{r_4}$  or $F_2\sim_2 pt .$
		\end{enumerate}
\end{theorem}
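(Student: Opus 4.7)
My plan is to handle this in two stages: first classify the rank-five component $F_0$ up to mod-$2$ cohomology, then enumerate the possibilities for the remaining components, whose total rank must equal $8-5=3$.

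\emph{Classifying $F_0$.} By Proposition \ref{thm 1}, $F_0$ is a $\mathbb{Z}_2$-Poincar\'{e} duality space of some formal dimension $r\le n+m+l$, and by Proposition \ref{puppe} its cohomology algebra is generated by at most three elements. The palindromic Poincar\'{e} polynomial together with total rank $5$ and $H^0\cong H^r\cong\mathbb{Z}_2$ forces the Betti numbers of $F_0$ into one of two shapes: either $(1,1,1,1,1)$ in degrees $0<q_1<q_2=r/2<q_3<r$ with $q_1+q_3=r$, or $(1,3,1)$ concentrated in degrees $0,r/2,r$. In the $(1,3,1)$ case the cup product is a nondegenerate symmetric form on $H^{r/2}(F_0)\cong\mathbb{Z}_2^3$; since odd-rank symmetric forms over $\mathbb{Z}_2$ are not alternating, a basis change arranges $u_i^2=v$ and $u_iu_j=0$ for $i\ne j$, so $F_0\sim_2\underset{3}{\#}\mathbb{P}^2(r/2)$ with $r/2\in\{1,2,4,8\}$ by \cite{adam}. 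In the $(1,1,1,1,1)$ case Poincar\'{e} duality forces $u_1u_3=v$ and $u_2^2=v$, while the remaining products $u_1^2,u_1u_2,u_2u_3,u_3^2$ are each determined by matching degrees against $\{0,q_1,q_2,q_3,r\}$: if $u_1^2\ne 0$, the matching gives $q_2=2q_1$ and $u_1^2=u_2$, forcing $q_3=3q_1$ and $r=4q_1$, so the ring collapses to $\mathbb{Z}_2[u_1]/(u_1^5)=H^*(\mathbb{P}^4(q_1))$; if $u_1^2=0$, the same analysis kills $u_1u_2,u_2u_3,u_3^2$, leaving the ring of $\mathbb{P}^2(q_2)\#(\mathbb{S}^{q_1}\times\mathbb{S}^{q_3})$.

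\emph{Remaining components.} The other components have ranks summing to $3$, and each of them is a Poincar\'{e} duality space by Proposition \ref{thm 1}. The PD algebras of rank $\le 3$ generated by at most three elements are exhausted by $\mathbb{P}^2(s)$ (rank $3$), $\mathbb{S}^s$ (rank $2$), and a single point (rank $1$), so the rank-$3$ remainder splits either as one $\mathbb{P}^2$ (giving possibility (1)) or as $\mathbb{S}^{r_4}\sqcup pt$ (giving (2), with three-point configurations absorbed by allowing $r_4=0$). Proposition \ref{prop1} supplies the required bounds on the dimensions $r_i$.

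\emph{Main obstacle.} The delicate step is the $(1,1,1,1,1)$ case, where one must exclude hybrid ring structures (for instance $u_1^2=u_3$ with $u_2^2=u_1u_3$) that are consistent with Poincar\'{e} duality alone but are not on the list; I would rule these out by combining the restriction identities coming from $a^2=b^2=c^2=0$ in $H^*(X)$ (which control $j_0^*(\alpha^2),j_0^*(\beta^2),j_0^*(\gamma^2)$) with the Adams restrictions on the admissible values of $q_i$.
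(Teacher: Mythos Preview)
Your two-stage strategy and the enumeration of the remaining rank-$3$ piece match the paper's proof. Your handling of the $(1,3,1)$ Betti shape via the classification of nondegenerate symmetric $\mathbb{Z}_2$-forms on an odd-dimensional space is actually cleaner than the paper's: the paper separately treats ``all $u_i$ self-dual'' versus ``exactly one $u_i$ self-dual'' (the latter yielding $\mathbb{P}^2(q)\#(\mathbb{S}^q\times\mathbb{S}^q)$, which your form argument recognizes as $\#_3\mathbb{P}^2(q)$ in one stroke).

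The gap is exactly where you flag it, but your proposed fix is heavier than what the paper does. The paper organizes the $(1,1,1,1,1)$ case by first splitting on $u_1u_2$ rather than on $u_1^2$. If $u_1u_2\neq0$, then necessarily $u_1u_2=u_3$, and uniqueness of the Poincar\'{e} dual of $u_2$ forces $u_1^2=u_2$, giving $\mathbb{P}^4(q_1)$. If $u_1u_2=0$ and $u_1^2\neq0$, one is forced into $u_1^2=u_3$ (your hybrid), so $q_3=2q_1$, $r=3q_1$, $q_2=3q_1/2$, and the ring is $\mathbb{P}^3(q_1)\#\mathbb{P}^2(q_2)$ with $2q_2=3q_1$. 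The paper then simply declares this ``not possible,'' by the same Adams arithmetic invoked in Theorem~\ref{3.3} for the $\mathbb{P}^5\#\mathbb{P}^3$ exclusion: both $q_1$ and $q_2$ must lie in $\{1,2,4,8\}$, and $2q_2=3q_1$ admits no solution there. So the hybrid is eliminated by a one-line numerical check; there is no need to unpack $j_0^*(\alpha^2)$ or the other restriction identities you propose.
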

\begin{proof}
	Let $F_0$ be the component of $F$ such that rk $H^*(F_0)=5.$ Suppose  that $0<q_1\leq q_2\leq q_3 < r$ are non vanishing dimensions of $H^*(F_0),$  and $u_i$ generates $H^{q_i}(F_0),1\leq i\leq 3$ and $v$ generates $H^{r}(F_0).$ Clearly, $u_iv=0,1\leq i\leq 3.$ \\ First, we consider that each generator $u_i, 1\leq i \leq 3,$ is poincar\'{e} dual of itself. In this case, we get $u_1^2=u_2^2=u_3^2$  generates  $H^{r}(F),$ and hence $F_0\sim_2 \underset{3}{\#} \mathbb{P}^2(q_1).$\\ Now, we consider that exactly  one generator is poincar\'{e} dual of itself.  Let $u_2$ be poincar\'{e} dual of itself. So, we have  $u_3$ is  the  poincar\'{e} dual of $u_1.$ Then, $u_1u_3=u_2^2$ generates $H^{r}(F_0).$ Clearly, $u_2u_3=u_3^2=0.$ Suppose that $u_1u_2 = 0.$ Further, if $u_1^2 \neq 0,$ then we must have  $u_3=u_1^2,$ and  hence $F_0 \sim_2 \mathbb{P}^3(q_1)\# \mathbb{P}^2(q_2),$ which is not possible. 
	If $u_1^2=0,$ then  $F_0\sim_2 \mathbb{P}^2(q_2) \# ( {\mathbb{S}^{q_1} \times \mathbb{S}^{q_3} }).$ Now, suppose $u_1u_2 \neq 0.$ Then,  we get $u_3=u_1u_2.$ So,  $u_1^2 u_2=u_2^2$ generates $H^{r}(F_0)$ and  we must have $u_2=u_1^2.$ Hence, $F_0 \sim_2 \mathbb{P}^4(q_1),$ where $q_1=1,2,4,8.$ For the remaining cases, when $u_1$ or $u_3$ is poincar\'{e} dual of itself, we only get  $F_0\sim_2 \mathbb{P}^2(q) \# ( {\mathbb{S}^{q} \times \mathbb{S}^{q} }).$\\
	 Since rk $H^{*}(F)=8$ and rk $H^*(F_0)=5,$ then $F$ have at most four components. Clearly, the components other than $F_0$  have the mod $2$ cohomology of a point or a sphere or a projective space of height two. Hence, our claim. 
\end{proof}
\begin{theorem}\label{3.14}
	Let $G=\mathbb{Z}_2$ act on $X\sim_2 \mathbb{S}^n \times \mathbb{S}^m \times \mathbb{S}^l,$ where $1\leq n \leq m \leq l$ and $X$ is TNHZ in $X_G.$ If $F$ is disconnected  and one of the rank of cohomology ring of a component of $F$ is six, then $F\sim_2\underset{i=1}{\overset{2}{\sqcup}} F_i,$ where $F_1 \sim_2 \underset{2}{\#} \mathbb{P}^3(r_1)$ or $\mathbb{P}^3(r_1) \# (\mathbb{S}^{r_2}\times \mathbb{S}^{r_3})$ or $\mathbb{P}^2(r_1)\times \mathbb{S}^{r_2}$ or $\mathbb{P}^5(r_1)$ and $F_2\sim_2 \mathbb{S}^{r_2}.$  \end{theorem}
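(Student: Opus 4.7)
The plan is to dissect the rank-$6$ component $F_0$ of $F$ by combining Poincar\'e duality on $F_0$ with Puppe's three-generator bound, and then to identify the rank-$2$ remainder by a Floyd-type rank count. By Proposition \ref{thm 1}, $F_0$ is a mod-$2$ Poincar\'e duality space of some formal dimension $r \leq n+m+l$. Let $0 < q_1 \leq q_2 \leq q_3 \leq q_4 < r$ denote the non-vanishing sub-top dimensions of $H^*(F_0)$, with $u_i$ a generator of $H^{q_i}(F_0)$ and $v$ the generator of $H^r(F_0)$. Since $H^*(X;\mathbb{Z}_2)$ has three algebra generators $a,b,c$, Proposition \ref{puppe} forces $H^*(F_0)$ to admit at most three algebra generators.

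First I would carry out the duality bookkeeping. Uniqueness of Poincar\'e duals pairs the four classes $u_1,\dots,u_4$ under cup product into $H^r$, and any self-dual $u_i$ requires $q_i = r/2$. If three or four of the $u_i$ were self-dual, $H^*(F_0)$ would contain at least four algebra-independent classes in the single dimension $r/2$, violating Puppe's bound. Hence only two configurations survive: either (a) no $u_i$ is self-dual and $u_1u_4 = u_2u_3 = v$, or (b) exactly two are self-dual, which the ordering forces to be $u_2$ and $u_3$ with $q_2 = q_3 = r/2$ and $u_1u_4 = u_2^2 = u_3^2 = v$.

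Next I would split into sub-cases by recording which of the remaining products $u_iu_j$ (with $i+j \neq 5$) and squares $u_i^2$ vanish, following the template of Theorems \ref{3.1}--\ref{3.6}. In each sub-case, Puppe's bound together with uniqueness of duals trims the allowed algebra drastically, and the change-of-basis $d = u_i + u_j$ (as used in Theorems \ref{3.2}--\ref{3.5}) normalises coincident self-dual pairs to the connect-sum form. I expect the following four identifications to emerge: a full power structure with $u_1^5 = v$ yields $F_0 \sim_2 \mathbb{P}^5(r_1)$; two disjoint equal-degree self-dual pairs collapse after the change of basis to $F_0 \sim_2 \underset{2}{\#}\mathbb{P}^3(r_1)$; a $\mathbb{P}^3$-subalgebra together with a pair of spherical generators whose product is zero gives $F_0 \sim_2 \mathbb{P}^3(r_1)\#(\mathbb{S}^{r_2}\times \mathbb{S}^{r_3})$; and a $\mathbb{P}^2$-subalgebra tensored with an independent spherical class gives $F_0 \sim_2 \mathbb{P}^2(r_1)\times \mathbb{S}^{r_2}$. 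The Adams restriction $r_1 \in \{1,2,4,8\}$ for each projective factor follows exactly as in the connected case.

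Finally, the rank identity $\sum_i \mathrm{rk}\, H^i(F) = 8$ together with $\mathrm{rk}\, H^*(F_0) = 6$ leaves total rank $2$ for the remaining components. By Proposition \ref{prop1} and Proposition \ref{thm 1} each such component is a mod-$2$ Poincar\'e duality space of rank at most $2$, hence either a point or a mod-$2$ cohomology sphere; the listed conclusion $F_2 \sim_2 \mathbb{S}^{r_2}$ covers both possibilities (with $r_2 = 0$ absorbing the two-point case). The hard part will be the sub-case analysis for $F_0$ itself: one must verify that every combination of vanishing and nonvanishing products consistent with Poincar\'e duality and the three-generator bound yields exactly one of the four listed algebras, and that no extraneous candidate slips through. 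This is routine but delicate, and parallels the analyses already carried out for the lower-rank components earlier in this section.
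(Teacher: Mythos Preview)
Your plan coincides with the paper's approach: Poincar\'e duality and Puppe's three-generator bound on the rank-$6$ component, then a case split on which cup products vanish, finishing with the rank count for the leftover sphere. The only difference is that the paper dismisses your configuration~(b) outright (asserting that two self-dual basis classes already contradict Proposition~\ref{puppe}) and runs the entire sub-case analysis inside configuration~(a), branching on whether $u_1u_2$, $u_1u_3$, $u_1^2$, $u_2^2$ vanish.
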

\begin{proof}
	Let $F_0$ be the component of $F$ with rk $H^*(F_0)=6.$  Assume that $0<q_1\leq q_2\leq q_3 \leq q_4 < r$ are  non vanishing dimensions of $H^*(F_0),$ and $u_i$  generates $H^{q_i}(F_0),1\leq i\leq 4$ and $v$ generates $H^{r}(F_0).$  If each generator $u_i$ is poincar\'{e} dual of itself or any two generators are poincar\'{e} dual of itself, then this contradicts Proposition \ref{puppe}. As $q_i\leq q_{i+1},1\leq i \leq 3,$   we must have  $u_4$ and $u_3$ are  poincar\'{e} duals of $u_1$ and $u_2,$ respectively. So, $u_1u_4=u_2u_3$ generates $H^{r}(F_0).$ Clearly, $u_2u_4=u_3u_4=0.$ So, the cup products $u_1u_2$ and $ u_1u_3$ are either  zero or nonzero.
	
First,	Suppose that   $u_1u_2= u_1u_3=0.$ Clearly,  $u_3^2=u_4^2=0.$ Now, we have $u_1^2$ and $u_2^2$ are either zero or nonzero. If $u_1^2=u_2^2=0,$ then $F\sim_2 (\mathbb{S}^{q_2}\times \mathbb{S}^{q_3})\#(\mathbb{S}^{q_1}\times \mathbb{S}^{q_4}),$ which contradicts Proposition \ref{puppe}. If $u_1^2\neq 0$ and $u_2^2\neq 0,$ then  we  get $u_4=u_1^2$ and $u_3=u_2^2.$ Thus, $F\sim_2 \mathbb{P}^3(q_1)\#\mathbb{P}^3(q_2).$ If  $u_1^2\neq 0$ and  $u_2^2=0,$  then we get $F\sim_2 \mathbb{P}^3(q_1)\#(\mathbb{S}^{q_2}\times \mathbb{S}^{q_3}).$ Similarly, if $u_1^2=0$ and  $u_2^2\neq0,$ then $F\sim_2 \mathbb{P}^3(q_1)\#(\mathbb{S}^{q_1}\times \mathbb{S}^{q_4}).$  

Now, suppose that  $u_1u_2 \neq 0$ and $u_1u_3 =0.$ Then, we have either $u_1u_2=u_3$ or $u_1u_2=u_4.$ If $u_1u_2=u_3,$ then  $u_1u_4=u_1u_2^2$
	 generates   $H^{r}(F_0).$ So, $u_2^2 \neq 0,$ and we get  $u_4=u_2^2.$ Further, if $u_1^2=0,$ then we get $F\sim_2 \mathbb{P}^2(q_1)\times \mathbb{S}^{q_2}.$ For $u_1^2\neq 0,$ we get  $u_1^2=u_2^2.$ By the change of basis $d=u_1+u_2,$ we get $F\sim_2 \mathbb{P}^2(q_1)\times \mathbb{S}^{q_1}.$ Similarly, if  $u_1u_2=u_4,$ then $F\sim_2 \mathbb{P}^2(q_1)\times \mathbb{S}^{q_2}.$
	 
	  Now,  suppose that $u_1u_2 = 0$ and $u_1u_3 \neq 0.$ Then,  $u_1u_3=u_4$ and $u_1^2u_3$  generates   $H^{r}(F_0).$ Clearly, $u_3^2=0$ and $u_2=u_1^2.$ Thus, $F\sim_2 \mathbb{P}^2(q_1)\times \mathbb{S}^{q_3}.$
	  
	   Finally, suppose that both $u_1u_2 $ and $u_1u_3 $ are nonzero. In this case, we  get  $u_3=u_1u_2$ and $u_4=u_1u_3.$ Clearly, $u_2=u_1^2,$ and $F\sim_2 \mathbb{P}^5(q_1),$ where $q_1=1,2,4,8.$\\ Note that, the component other than $F_0$ have sphere cohomology. Hence, our claim.
\end{proof}
\begin{theorem}\label{3.15}
	Let $G=\mathbb{Z}_2$ act on $X\sim_2 \mathbb{S}^n \times \mathbb{S}^m \times \mathbb{S}^l,$ where $1\leq n \leq m \leq l$ and $X$ is TNHZ in $X_G.$ If $F$ is disconnected  and one of the rank of cohomology ring of component of $F$ is seven, then $F$ must be one of the following:
		\begin{enumerate} \item $F\sim_2\underset{i=1}{\overset{2}{\sqcup}} F_i,$ where $F_1 \sim_2 (\mathbb{P}^2(r_1)\times \mathbb{S}^{r_2}) \# \mathbb{P}^2(r_3)$ or $\mathbb{P}^4(r_1) \# (\mathbb{S}^{r_2} \times \mathbb{S}^{r_3})$ or $\mathbb{P}^6(r_1)$ and $F_2 \sim_2 pt.$
			\item $  F \sim_2 {\mathbb{Z}_2[u_1,u_2]}/{<u_1^5,u_2^3,u_1^3+u_2^2,u_1^2u_2>}+pt,$ where deg $u_1=r_1$ and 	deg $u_2=r_2.$
	\end{enumerate}
	\end{theorem}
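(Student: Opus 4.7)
The plan is to adapt the template of Theorems \ref{3.13} and \ref{3.14} to the rank-seven setting. Let $F_{0}$ denote the component with $\mathrm{rk}\, H^{*}(F_{0})=7,$ and let $0<q_{1}\le q_{2}\le q_{3}\le q_{4}\le q_{5}<r$ be its nonvanishing cohomological dimensions below the top, with $u_{i}$ generating $H^{q_{i}}(F_{0})$ and $v$ generating $H^{r}(F_{0}).$ Since $\sum_{i}\mathrm{rk}\, H^{i}(F)=8$ and $\mathrm{rk}\, H^{*}(F_{0})=7,$ the remaining components contribute total rank one, so $F$ must have exactly one further component, necessarily a point; this produces the ``$+\mathrm{pt}$'' summand appearing in both listed possibilities. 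What remains is therefore the classification of all Poincar\'e duality algebras of rank seven over $\mathbb{Z}_{2}$ that are generated by at most three elements (Proposition \ref{puppe}).

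First I would use uniqueness of Poincar\'e duals to pair up the basis elements. With five middle-degree generators, an odd number of them must be self-dual; more than one self-dual element forces at least four algebra generators and violates Proposition \ref{puppe} (cf.\ the analysis preceding Theorem \ref{3.1}). Hence exactly $u_{3}$ is self-dual, giving $q_{3}=r/2,$ $q_{4}=r-q_{2},$ $q_{5}=r-q_{1},$ and the canonical relations
\begin{equation*}
u_{1}u_{5}=u_{2}u_{4}=u_{3}^{2}=v.
\end{equation*}
All products with $q_{i}+q_{j}>r$ vanish, and the uniqueness of duals kills off most of the remaining candidate products. The control parameters are then $u_{1}u_{2},u_{1}u_{3},u_{2}u_{3},u_{1}^{2},u_{2}^{2},$ and $u_{3}u_{4}$ (when degrees permit), each of which is either zero or equal to a basis element of matching degree.

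I would then run a case analysis on which subsets of these control products are nonzero, in the style of Theorems \ref{3.3}, \ref{3.14}, and \ref{3.6}. In each branch the three relations above, together with uniqueness of duals, collapse most possibilities immediately; when degrees coincide because of a forced identification such as $u_{2}=u_{1}^{2}$ or $u_{1}^{2}=u_{2}^{2},$ a change of basis $d=u_{1}+u_{2}$ (or similar) normalizes the algebra, exactly as in the proofs of Theorems \ref{3.2}--\ref{3.6}. Four surviving algebras should then emerge: the single-generator ring $\mathbb{P}^{6}(r_{1}),$ the two connected sums $\mathbb{P}^{4}(r_{1})\#(\mathbb{S}^{r_{2}}\times\mathbb{S}^{r_{3}})$ and $(\mathbb{P}^{2}(r_{1})\times\mathbb{S}^{r_{2}})\#\mathbb{P}^{2}(r_{3}),$ and the exceptional algebra $\mathbb{Z}_{2}[u_{1},u_{2}]/\langle u_{1}^{5},u_{2}^{3},u_{1}^{3}+u_{2}^{2},u_{1}^{2}u_{2}\rangle$ of possibility (2). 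Adams' theorem \cite{adam} then restricts every ``$\mathbb{P}$-type'' parameter to $\{1,2,4,8\}.$

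The main obstacle will be the bookkeeping: \emph{a priori} there are many subsets of control products to consider, and the equal-degree sub-cases require careful changes of basis to rule out duplicate or spurious algebras. The key simplifying principle throughout is that no basis element may serve as the Poincar\'e dual of two distinct basis elements; this single observation collapses the majority of branches. Once the classification of $H^{*}(F_{0})$ is complete, Proposition \ref{prop1} supplies the constraints between the dimension parameters $r_{i}$ and $n,m,l,$ and the point component has already been accounted for by the rank count above.
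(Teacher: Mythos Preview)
Your plan is essentially the paper's own proof: same Poincar\'e-duality set-up, same appeal to Proposition \ref{puppe} to force a unique self-dual generator $u_{3}$, same case analysis on cup products, same changes of basis. One bookkeeping slip is worth flagging: your list of ``control parameters'' is not quite the right one. The paper's four primary control products are $u_{1}u_{2},\,u_{1}u_{3},\,u_{1}u_{4},\,u_{2}u_{3}$, with $u_{1}^{2}$ and $u_{2}^{2}$ handled as secondary sub-cases inside each branch. You have omitted $u_{1}u_{4}$ (which can be nonzero, equal to $u_{5}$, and yields the $(\mathbb{P}^{2}(q_{1})\times\mathbb{S}^{q_{4}})\#\mathbb{P}^{2}(q_{3})$ instance of possibility (1)), and you have included $u_{3}u_{4}$, which is in fact always zero by uniqueness of Poincar\'e duals since $u_{2}u_{4}=v$ already. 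With that correction the case split runs exactly as in the paper: no nonzero products and exactly three nonzero products are impossible; one nonzero product gives the two connected-sum families; the unique viable pair $u_{1}u_{2},u_{1}u_{3}$ gives the exceptional algebra of possibility (2); all four nonzero gives $\mathbb{P}^{6}(q_{1})$.
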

\begin{proof}
	Let $F_0$ be the component of $F$ such that rk $H^*(F_0)=7.$ Clearly, the  component of $F$ other than $F_0$ have  point cohomology. Assume that $0<q_1\leq q_2\leq q_3 \leq q_4 \leq q_5 < r$ are the non vanishing dimensions of $H^*(F_0)$ with  $u_i,1\leq i\leq 5$ is  generator of $H^{q_i}(F_0)$ and $v$ generates $H^{r}(F_0).$ If each generator $u_i$ is poincar\'{e} dual of itself or any three genertors are poincar\'{e} duals of itself,  then we get a  contradiction of  Proposition \ref{puppe}. Clearly,  $u_3$ is poincar\'{e} dual of itself and  $u_5$ and $u_4$ are  poincar\'{e} duals of $u_1$ and $u_2,$ respectively. 
	Therefore, $u_1u_5=u_2u_4=u_3^2$  generates $H^{r}(F_0).$ Note that the cup products $v^2=u_2u_5=u_3u_4=u_3u_5=u_4^2=u_5^2=u_iv=0, 1\leq i\leq 5.$ The remaining cup products $u_1u_2,u_1u_3,u_1u_4,$ and $u_2u_3$ may be zero or nonzero. Now, we considered  cases accordingly to how many cup products are nonzero.\\ It is easy to observe that,  if any three cup products are nonzero or none of the cup products are nonzero then these cases are not possible. So, we consider the following cases:\\
	\textbf{Case(1):} If any one cup product from $u_1u_2,u_1u_3,u_1u_4,u_2u_3$ is nonzero.\\
		First,	Suppose that  $u_1u_2 \neq 0,$ then either $u_1u_2 =u_4$ or $u_1u_2=u_5.$ Let $u_1u_2 =u_4.$ Then, $v= u_1u_5=u_2^2u_1=u_3^2.$  If $u_2^2=u_3,$ then $u_3^2=u_2^4 \neq 0$ but $u_2u_3= u_2^3=0,$ a contradiction. By  the uniqueness of poincar\'{e} dual of $u_2,$ we get  $u_2^2=u_5.$ If $u_1^2 = 0,$ then  we get $F\sim_2 (\mathbb{P}^2(q_1)\times \mathbb{S}^{q_2}) \# \mathbb{P}^2(q_3),$ and $q_1,q_3 \in \{1,2,4,8\}.$ If $u_1^2 \neq  0,$ then we must have $u_2^2=u_1^2,$ and by the change of basis $d=u_1+u_2,$  we get $F \sim_2(\mathbb{P}^2(q_1)\times \mathbb{S}^{q_1}) \# \mathbb{P}^2(q_3).$ Similarly, if $u_1u_2=u_5,$ then $F \sim_2(\mathbb{P}^2(q_1)\times \mathbb{S}^{q_2}) \# \mathbb{P}^2(q_3).$ 
	
Suppose that $u_1u_3\neq 0,$ then $u_1u_3=u_5.$ So, $u_1^2u_3=u_2u_4=u_3^2=v.$ Clearly, $u_1^2=u_3 $ and $u_2^2=0.$ Thus,  $F \sim_2\mathbb{P}^4(q_1)\# (\mathbb{S}^{q_2} \times  \mathbb{S}^{q_2}), q_1=1,2,4,8.$  \\Similarly, if $u_1u_4 \neq 0,$ then we get $F\sim_2 (\mathbb{P}^2(q_1)\times \mathbb{S}^{q_4}) \# \mathbb{P}^2(q_3),$ and if  $u_2u_3 \neq 0,$ then we get $F \sim_2\mathbb{P}^4(q_2)\# (\mathbb{S}^{q_1} \times  \mathbb{S}^{q_5}).$ These realizes possibility (1). \\
\textbf{Case(2):} If any two  cup products from $u_1u_2,u_1u_3,u_1u_4,u_2u_3$ are nonzero.\\
It is easy observe that the only possible nonzero pair is $u_1u_2~\&~u_1u_3.$ We must have  $u_1u_3=u_6$ and $u_1u_2=u_4$. Thus, $u_1^2u_3=u_2^2u_1=u_3^2=v.$ Clearly, $u_3=u_1^2$ and $u_1u_3=u_2^2.$ Thus, the cohomology ring of $H^*(F_0)$ isomorphic to $${\mathbb{Z}_2[u_1,u_2]}/{<u_1^5,u_2^3,u_1^3+u_2^2,u_1^2u_2>},$$ where deg $u_1=q_1$ and deg $u_2=q_2.$ This realizes possibility (2).\\
\textbf{Case(3):} If all cup products $u_1u_2,u_1u_3,u_1u_4,u_2u_3$ are nonzero.\\
	In this case, we must have $u_1u_2=u_3,$ $u_1u_3=u_4,$ $u_1u_4=u_2u_3=u_5$ and $u_1^2=u_2.$ Hence, $F_0\sim_2 \mathbb{P}^6(q),$ where $q=1,2,4,8.$ This realizes possibility (1).
\end{proof}
Next, suppose that $X$ is not TNHZ in $X_G$ and $F$ is disconnected fixed point set of involutions on $X.$ A proof of the following theorem is similar to proof of Theorem \ref{3.10}. 
\begin{theorem}\label{thm3.15}
	Let $G=\mathbb{Z}_2$ act on  $X\sim_2 \mathbb{S}^{n}\times \mathbb{S}^{m}\times  \mathbb{S}^{l}, 1\leq n\leq m \leq l$ and    $X$ is not TNHZ in $X_G.$ If $F$ is disconnected fixed point set and  $\pi_1(B_G)$ acts nontrivially on $H^*(X),$ then $F$ must be the one of the following:
	\begin{enumerate}
		\item $F \sim_2 \mathbb{S}^0.$
		\item $F \sim_2 \mathbb{S}^{r_1}+  \mathbb{S}^{r_2}, 0\leq r_1,r_2\leq n+m+l.$
			\item $F \sim_2 \mathbb{P}^2(r_1)+ pt,  r_1=1,2,4,8, r_1\leq n+m+l.$
		\item $F \sim_2 \mathbb{S}^{r_1}\vee \mathbb{S}^{r_2} + pt, 0\leq r_1,r_2\leq n+m+l.$ 
	
	\end{enumerate}
\end{theorem}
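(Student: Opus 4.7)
The plan is to adapt the spectral-sequence argument of Theorem~\ref{3.10} to the disconnected setting. Since $\pi_1(B_G)=G$ acts nontrivially on $H^*(X)$, exactly one of the degree configurations $n=m<l$, $n<m=l$, $n=m=l$ must hold; I would handle $n=m<l$ in detail (the others being entirely analogous). With $g$ a generator of $G$, there are three possible nontrivial actions on the two-dimensional $H^n(X)$: $g^*(a)=b$ and $g^*(b)=a$; $g^*(a)=a$ and $g^*(b)=a+b$; and $g^*(a)=a+b$ and $g^*(b)=b$. By naturality of cup products, $g^*$ also acts nontrivially on $H^{n+l}(X)$. Applying Proposition~\ref{nontrivial}, the $E_2$-page of the Borel fibration has four ``full'' rows ($E_2^{k,i}\cong\mathbb{Z}_2$ for all $k\geq 0$) at $i\in\{0,l,2n,2n+l\}$ and two ``truncated'' rows ($E_2^{k,i}=0$ for $k>0$, $E_2^{0,i}\cong\mathbb{Z}_2$) at $i\in\{n,n+l\}$.

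Next, I would use Proposition~\ref{prop 6} to observe that $1\otimes ab$ is a permanent cocycle, and use $F\neq\emptyset$ to see that $d_{l+1}(1\otimes c)=0$. Following the differential analysis in Theorem~\ref{3.10} (paired behaviour of $d_{l-2n+1}$ and $d_{2n+l+1}$), either the spectral sequence has some nontrivial later differential — forcing $\mathrm{rk}\,H^*(F)=2$ via Proposition~\ref{thm 2} in high degree — or $E_2=E_\infty$, giving $\mathrm{rk}\,H^*(F)=4$. (Rank $6$ does not arise because the two truncated rows contribute only in bounded degree and hence cannot be seen by $H^*(F)$ via the high-degree isomorphism.)

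If $\mathrm{rk}\,H^*(F)=2$ and $F$ is disconnected, the only option is two rank-one components, so $F\sim_2 \mathbb{S}^0$, realizing possibility~(1). If $\mathrm{rk}\,H^*(F)=4$ and $F$ is disconnected, the rank partitions across components are $3+1$, $2+2$, $2+1+1$, or $1+1+1+1$. I would then run the $j^*$ analysis of Theorem~\ref{3.10}: choose $\alpha,\beta\in H^*(X_G,x_G)$ representing the permanent cocycles $1\otimes ab\in E_2^{0,2n}$ and $1\otimes c\in E_2^{0,l}$, write $j^*(\alpha)$ and $j^*(\beta)$ in terms of the generators $u_i$ of the positive-degree cohomology of $F$, and expand $j^*(\alpha\beta)$. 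As in Theorem~\ref{3.10}, injectivity of $j^*$ with $\alpha\beta\neq 0$ restricts the possible forms of $j^*(\alpha\beta)$; the new ingredient is that $u_iu_j=0$ automatically whenever $u_i$ and $u_j$ lie in different components. This forces any rank-$3$ component to be either $\mathbb{P}^2(r_1)$ (when $u_1^2=u_2$ inside the component) or $\mathbb{S}^{r_1}\vee\mathbb{S}^{r_2}$ (when all cup products within the component vanish), producing possibilities~(3) and~(4) when paired with the leftover point; the $2+2$ case gives possibility~(2); and the $2+1+1$, $1+1+1+1$ partitions are degenerate subcases of possibility~(2) (with some $r_i=0$). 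The dimension bounds $r_i\leq n+m+l$ then come from Proposition~\ref{prop1}.

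The main technical obstacle is the bookkeeping in the rank-$4$ case. Without access to Proposition~\ref{thm 1} (Poincar\'e duality for components) or Proposition~\ref{puppe} (bound on the number of generators), both of which required $X$ TNHZ, one must lean entirely on the spectral sequence and on the vanishing of cross-component cup products in order to rule out a connected rank-$4$ component (which would make $F$ connected, contrary to hypothesis) and to justify the wedge $\mathbb{S}^{r_1}\vee\mathbb{S}^{r_2}$ as a genuine possibility rather than being forced into Poincar\'e-duality shapes like $\mathbb{S}^{r_1}\times\mathbb{S}^{r_2}$, $\mathbb{P}^3(r_1)$, or $\mathbb{P}^2(r_1)\#\mathbb{P}^2(r_1)$ that appeared in the connected analysis.
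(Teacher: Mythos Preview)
Your proposal is correct and matches the paper's approach exactly: the paper's entire proof is the single sentence ``A proof of the following theorem is similar to proof of Theorem~\ref{3.10},'' and you have carried out precisely that adaptation. Your final paragraph overstates the difficulty slightly---there is nothing to ``rule out'' about a connected rank-$4$ fixed set (that case is simply outside the hypothesis $F$ disconnected), and the $j^*$ analysis of Theorem~\ref{3.10} never invokes Propositions~\ref{thm 1} or~\ref{puppe}, so their unavailability here is not an obstacle; the wedge $\mathbb{S}^{r_1}\vee\mathbb{S}^{r_2}$ for a rank-$3$ component follows directly from the elementary observation that a graded $\mathbb{Z}_2$-algebra with Poincar\'e series $1+t^{q_1}+t^{q_2}$ is either $H^*(\mathbb{P}^2(q_1))$ or $H^*(\mathbb{S}^{q_1}\vee\mathbb{S}^{q_2})$.
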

Note that if $\pi_1(B_G)$ acts trivially on $H^*(X)$ and rk $H^*(F)=2$ or $4,$ then we get same possibilities of disconnected fixed point sets as in Theorem  \ref{thm3.15}. For the smooth manifolds possibilities  (3) and (4) will not occur.

Next, we give some examples to realizes above Theorems:
\begin{example} Let  $G=\mathbb{Z}_2$ act on $Y\sim_2 \mathbb{S}^n\times \mathbb{S}^m,n\leq m.$ If $Y$ is TNHZ in $Y_G,$ then the possibilities of mod $2$ cohomology of fixed point sets  are $\mathbb{S}^{r_1}\times \mathbb{S}^{r_2}, \mathbb{P}^2(r_1) \# \mathbb{P}^2(r_1), \mathbb{P}^3(r_1),\\~ \mathbb{P}^2(r_1)+pt$ and $ \mathbb{S}^{r_1}+\mathbb{S}^{r_2},$ and if $Y$ is not TNHZ in $Y_G,$ then   $F\sim_2\mathbb{S}^{r_1}$ \cite{Bredon}. Recall that if $G$ acts on $Z\sim_2 \mathbb{S}^l,l>0,$ then  mod $2$ cohomology of the fixed point set is $\mathbb{S}^{r_3},r_3\geq 0.$ After taking the diagonal action of $G$ on  $X=Y \times Z\sim_2 \mathbb{S}^n\times \mathbb{S}^m \times \mathbb{S}^l,$ we  get the possibilities of mod $2$ cohomology of fixed point sets are   $\mathbb{S}^{r_1}\times \mathbb{S}^{r_2}\times \mathbb{S}^{r_3}, (\mathbb{P}^2(r_1) \# \mathbb{P}^2(r_1))\times \mathbb{S}^{r_3}, \mathbb{P}^3(r_1)\times \mathbb{S}^{r_3}, (\mathbb{P}^2(r_1)+pt)\times \mathbb{S}^{r_3}$ and $ (\mathbb{S}^{r_1}+\mathbb{S}^{r_2})\times \mathbb{S}^{r_3},$ in the case when $X$ is TNHZ in $X_G$ and $F\sim_2 \mathbb{S}^{r_1} \times \mathbb{S}^{r_3},$ in the case when  $X$ is not TNHZ in $X_G.$ This realizes possibility (2) of Theorems $\ref{3.3},$ $\ref{3.4}$ and $\ref{3.10},$ possibility (1) of Theorems $\ref{3.5}$ and $\ref{3.6},$ possibilities (1) and (5) of Theorem $\ref{3.12},$ and Theorem $\ref{3.14}.$
\end{example}

\end{document}